\theoremstyle{plain}
\theoremstyle{plain}\newtheorem{theorem}{Theorem}[section]
\theoremstyle{plain}\newtheorem{lemma}[theorem]{Lemma}
\theoremstyle{plain}
\theoremstyle{plain}\newtheorem{prop}[theorem]{Proposition}
\theoremstyle{definition}\newtheorem{remark}{Remark}[section]
\newcommand{\mr}{\mathbb{R}}
\newcommand{\mt}{\mathbb{T}}
\newcommand{\ben}{\begin{enumerate}}
	\newcommand{\een}{\end{enumerate}}
\newcommand{\bv}{\mathbf{v} }
\newcommand{\bV}{\mathbf{V} }
\newcommand{\bH}{\mathbf{H} }
\newcommand{\bb}{\mathbf{b} }
\newcommand{\bj}{\mathbf{j} }
\newcommand{\0}{\mathbf{0} }
\newcommand{\bx}{\mathbf{x} }
\newcommand{\Rmnum}[1]{\expandafter\@slowromancap\romannumeral #1@}
\numberwithin{equation}{section}
\begin{document}

	\title{Global stability and asymptotic behavior for incompressible ideal MHD equations with  velocity damping term}
\author{Hui Fang\,\,Pingping Gui,\,\,Yanping Zhou}
\date{}
\maketitle

\begin{abstract}
In this article, we study the stability and large time behavior for an multi-dimensional incompressible magnetohydrodynamical system with a velocity damping term, for small perturbations near a steady-state of magnetic field fulfilling the Diophantine condition. Our results  mathematically characterize the background magnetic field exerts the stabilizing effect, and bridge the gap left by previous work with respect to the asymptotic  behavior in time. Our proof  approach mainly relies on the Fourier analysis and energy estimates. In addition, we provide a versatile analytical framework applicable to many other partially dissipative fluid models.

\end{abstract}
\noindent {\bf MSC(2020):}\quad 35Q35, 35L45, 35B40.
    \vskip 0.02cm
	\noindent {\bf Keywords:} Ideal MHD equations, damping, stability, decay estimates.

\section{Introduction}
The magnetohydrodynamics (MHD) equations characterize the motion of electrically conducting flows such as electrolytes, liquid metals and plasmas, and play a foundational role in geophysical and astrophysical fluids (see, for example, \cite{Biskamp1993,PriestForbes2000,Davidson2001,Duvaut1972}).
When resistive effects are extremely weak, magnetic diffusion becomes negligible, leading to the so-called non-resistive MHD system, which takes the form
\begin{align}\label{mhd1}
\left\{
\begin{array}{l}
\partial_t \bv + (-\Delta)^{\alpha} \bv + \bv \cdot \nabla \bv + \nabla p = \bb \cdot \nabla \bb, \\[1mm]
\partial_t \bb + \bv \cdot \nabla \bb = \bb \cdot \nabla \bv, \\[1mm]
\nabla \cdot \bv = \nabla \cdot \bb = 0, \\[1mm]
\bv(\bx,0) = \bv_0(\bx), \quad \bb(\bx,0) = \bb_0(\bx),
\end{array}
\right.
\end{align}
where \(\bv(\bx,t)\), \(\bb(\bx,t)\), and \(p(\bx,t)\) denote the velocity, magnetic field, and pressure, respectively. The parameter \( \alpha \ge 0 \) characterizes the strength of velocity dissipation through the fractional Laplacian \( (-\Delta)^{\alpha} \); in particular, \( \alpha = 0 \) corresponds to a damping effect modeling the frictional relaxation of the flow. System \eqref{mhd1} describes plasmas that are strongly collisional or have extremely small resistivity due to collisions. It is relevant when the characteristic spatial scales are much larger than the ion skin depth and Larmor radius, and the temporal scales are much longer than the ion gyration period, so that magnetic diffusion can be neglected (see, e.g., \cite{Cabannes1970, Cowling1976, LandauLifshitz1984,Landau1984}).

Mathematically, system \eqref{mhd1} shares key structural features with the Euler and Navier-Stokes equations, while the magnetic coupling and the absence of resistivity introduce richer dynamics and substantial analytical difficulties.
Two fundamental issues concerning \eqref{mhd1} have attracted central attention.
The first is the \emph{global well-posedness} problem: Chemin et al. \cite{Chemin2016} got the local well-posedness in critical Besov spaces, whereas the global existence of classical solutions remains open even in two dimensions. The second is the \emph{stability} problem for perturbations around a background magnetic field.
A background field, say
\[
(\bv,\bb)=({\mathbf{0}},\widetilde{\bb})
\]
defines a family of steady states. Writing \(\bb\) for the perturbation \(\bb-\widetilde{\bb}\), one obtains the perturbed MHD system with two extra terms
\begin{align}\label{mhd11}
\left\{
\begin{aligned}
&\partial_t \bv + (-\Delta)^{\alpha} \bv + \bv\cdot\nabla \bv + \nabla p = \widetilde{\bb}\cdot\nabla \bb + \bb\cdot\nabla \bb, \\
&\partial_t \bb + u\cdot\nabla \bb = \widetilde{\bb}\cdot\nabla \bv + \bb\cdot\nabla \bv, \\
&\nabla\cdot \bv = \nabla\cdot \bb = 0, \\
&\bv(\bx,0)=\bv_0(\bx),\quad \bb(\bx,0)=\bb_0(\bx).
\end{aligned}
\right.
\end{align}
The stability problem for \eqref{mhd11} is motivated by the physical observation-
confirmed by both experiments and numerical simulations-that magnetic fields can stabilize electrically conducting fluids (see, e.g., \cite{HAlfvn1942,Alemany-Frisch1979,Davidson1995,Davidson1997,Alexakis2011,Gallet-Mordant2009,Gallet-Doering2015,Califano19999,Majda1984}).
Recent mathematical works have mainly focused on two classes of background fields:
(i) \emph{strong directional fields}, for example, $\overline \bb = \mathbf{e}_n$ along the last coordinate direction in $\mathbb{R}^n$, and
(ii) \emph{Diophantine fields} satisfying the non-resonance condition
\begin{align}\label{Diophantine}
|\widetilde \bb\cdot \bj|\ge \frac{c}{|\bj|^r},\qquad \bj\in\mathbb{Z}^n\setminus\{\0\},
\end{align}
for some $r>n-1$ and $c>0$. The Diophantine condition holds for almost all $\widetilde \bb\in\mathbb{R}^n$, except when its components are rational or one of them vanishes~\cite{chen-2022-3dmhd-Diophant}. Note that a strong directional field does not satisfy the Diophantine condition.

For the viscous case $\alpha = 1$ in \eqref{mhd11}, the stability problem under a strong background magnetic field $\overline \bb$ has a long history.
The analysis of well-posedness was initiated by Lin and Zhang~\cite{lin-2014-GlobalSmallSolutions} for a related three-dimensional model (see also~\cite{Lin-Zhang2015simplified-proof}), and was later extended by Lin et al.~\cite{Lin-Xu-Zhang2015JDE} in two dimensions and by Xu and Zhang~\cite{xu-2015-GlobalSmallSolutions} in three dimensions.
More results on the stability and long-time behavior under strong magnetic fields can be found in~\cite{ren2014global,zhangTJDE2016,Abidi-Zhang2017,Pan-Zhou-Zhu2018,Deng2018zhang,jiang2021}.
The earliest rigorous progress in this direction can be traced back to Bardos et al. \cite{Bardos-Sulem1988}, who proved the global well-posedness of the ideal incompressible MHD system for small perturbations around a nontrivial equilibrium, showing that a sufficiently strong magnetic field can suppress nonlinear interactions and prevent the formation of large gradients~\cite{Frisch1983,Kraichnan1965}.

For background fields fulfilling the Diophantine condition~\eqref{Diophantine},
Chen et al.~\cite{chen-2022-3dmhd-Diophant} first established global asymptotic stability in the three-dimensional periodic domain $\mathbb{T}^3$, proving convergence in $H^{4r+7}(\mathbb{T}^3)$ for $r>n-1$.
Zhai~\cite{zhai-2023-2dmhdstability-Diophant} later refined this result in the case of two dimensions $\mathbb{T}^2$, reducing the regularity to $H^{(3+2\beta)r+5+(\gamma+2\beta)}(\mathbb{T}^2)$ for arbitrary $\beta,\gamma>0$, while Xie et al.~\cite{XieJiu-CVPDE2024} lowered the threshold to $H^{(3r+3)^+}(\mathbb{T}^n)$ for both two- and three-dimensional settings.
In our recent work~\cite{Bie}, we further reduced the regularity requirement to \(H^{(3 + 2r + \tfrac{n}{2})^+}(\mathbb{T}^n)\).

 For the inviscid case $\alpha = 0$ in \eqref{mhd11} with a strong background magnetic field,
Wu et al.~\cite{wu-wu-xu-2015-GlobalSmallSolutionMHD} first analyzed the two dimensional system in the whole space, proving global stability of small perturbations together with explicit long-time decay rates in various Sobolev norms.
Jo et al.~\cite{Lee} later improved these results by relaxing certain regularity assumptions and extending the decay analysis.
Du et al.~\cite{Du2019} established exponential stability in a strip domain $\mathbb{R}\times[0,1]$, while Jiang et al.~\cite{Jiang2022} proved global existence and exponential decay of classical solutions in a horizontally periodic strip domain $\mathbb{T}^2\times[0,1]$.
For further results on inviscid systems with velocity damping, see~\cite{Tan2013,Wang2001,Pan2009,Sideris2003,Huang2003}.

More recently, Zhao and Zhai~\cite{zhao-zhai-2021-GlobalSmallSolutions3DMHD} investigated the three-dimensional inviscid system corresponding to $\alpha = 0$  in \eqref{mhd11}, namely,
\begin{align}\label{equation}
\left\{
\begin{array}{l}
\partial_t \bv + \bv + \bv \cdot \nabla \bv + \nabla p = \widetilde{\bb}  \cdot \nabla \bb + \bb \cdot \nabla \bb, \\[1mm]
\partial_t \bb + \bv \cdot \nabla \bb = \widetilde{\bb}  \cdot \nabla \bv + \bb \cdot \nabla \bv, \\[1mm]
\nabla\cdot \bv = \nabla \cdot \bb = 0, \\[1mm]
\bv(\bx,0)=\bv_0(\bx),\quad \bb(\bx,0)=\bb_0(\bx),
\end{array}
\right.
\end{align}
and established the global stability and asymptotic decay of small perturbations on the periodic domain $\mathbb{T}^3$. Inspired by~\cite{Elgindi2010,Jiang-Kim2023CVPDE}, in the present paper,  we still study the model~\eqref{equation}, whereas  on the $n$-dimensional torus $\mathbb{T}^n$ ($n \ge 2$), and further improve the results  with respect to the global stability and asymptotic behavior in time.

To capture the essential decay structure and clarify the underlying dynamics, we first study the corresponding linearized system:
\begin{align}\label{lineartheorem}
\left\{ \begin{array}{l} \partial_t \bV - \Delta \bV = \widetilde{\bb}\cdot\nabla \bH, \\[0.4ex] \partial_t \bH = \widetilde{\bb}\cdot\nabla \bV, \\[0.4ex] \nabla\cdot \bV = \nabla\cdot \bH = 0, \\[0.4ex] \bV(\bx,0)=\bV_0(\bx), \quad \bH(x,0)=\bH_0(\bx),
\end{array} \right.
\end{align}
whose decay properties provide the foundation for the nonlinear analysis. The following theorem describes the decay behavior of the linearized system~\eqref{lineartheorem}.
\begin{theorem}\label{thm1}
Let \( n \ge 2 \) and \( r > n - 1 \), and suppose that the background magnetic field \( \widetilde{\bb} \) adheres to the Diophantine condition.
Assume that the initial data \( (\bV_0, \bH_0) \in H^m(\mathbb{T}^n) \) for some integer \( m \ge 1 \) satisfy the mean-free conditions
\begin{align}\label{mean-zero}
\int_{\mathbb{T}^n} \bV_0\, {d}\bx = \0,
\qquad
\int_{\mathbb{T}^n} \bH_0\, {d}\bx = \0.
\end{align}
Then the corresponding solution \( (\bV,\bH) \) to \eqref{lineartheorem} fulfills
\begin{subequations}\label{linearm}
\begin{align}
\|\bV(t)\|_{H^s(\mathbb{T}^n)}
&\le
\begin{cases}
C(1+t)^{-\frac{1}{2}-\frac{m-s}{2r}} \, \|(\bV_0,\bH_0)\|_{H^{m}(\mathbb{T}^n)}, & 0 \le s \le m-1, \\[0.8ex]
C\,\|(\bV_0,\bH_0)\|_{H^{m}(\mathbb{T}^n)}, & s = m,
\end{cases} \\[1ex]
\|\bH(t)\|_{H^s(\mathbb{T}^n)}
&\le C(1+t)^{-\frac{m-s}{2r}} \, \|(\bV_0,\bH_0)\|_{H^{m}(\mathbb{T}^n)}, \qquad 0 \le s \le m,
\end{align}
\end{subequations}
where \(C>0\) denotes a constant independent of \(t\).
\end{theorem}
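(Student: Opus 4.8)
The plan is to diagonalize \eqref{lineartheorem} in Fourier space and then carefully sum the resulting frequency-by-frequency bounds against the Diophantine condition. First I would apply the Helmholtz (Leray) projection to remove the pressure—already absent here—and expand $\bV(t)=\sum_{\bj\neq\0}\widehat{\bV}(\bj,t)\,e^{\ii\bj\cdot\bx}$, $\bH(t)=\sum_{\bj\neq\0}\widehat{\bH}(\bj,t)\,e^{\ii\bj\cdot\bx}$, noting that the mean-free conditions \eqref{mean-zero} are preserved by the flow, so the sum runs over $\bj\in\mathbb{Z}^n\setminus\{\0\}$. For each fixed $\bj$, writing $\lambda=\lambda(\bj):=|\bj|^2$ and $\mu=\mu(\bj):=\widetilde{\bb}\cdot\bj$, the system becomes the $2\times2$ linear ODE
\begin{align*}
\frac{d}{dt}\widehat{\bV} = -\lambda\,\widehat{\bV} + \ii\mu\,\widehat{\bH},\qquad
\frac{d}{dt}\widehat{\bH} = \ii\mu\,\widehat{\bV},
\end{align*}
acting componentwise (the divergence-free constraint is propagated since $\bj\cdot\widehat{\bV}_0=\bj\cdot\widehat{\bH}_0=0$). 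The characteristic equation is $z^2+\lambda z+\mu^2=0$, with roots $z_\pm=\tfrac{1}{2}\bigl(-\lambda\pm\sqrt{\lambda^2-4\mu^2}\bigr)$.

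Next I would extract pointwise decay for each mode. The key quantitative observation is that the slower root satisfies $\operatorname{Re} z_+ \le -c\,\mu^2/\lambda$ when $2|\mu|\le\lambda$ (expand the square root: $z_+ = -\mu^2/\lambda + O(\mu^4/\lambda^3)$), and $\operatorname{Re} z_\pm = -\lambda/2$ when $2|\mu|>\lambda$; in all cases $\operatorname{Re} z_\pm \le -c\min\{\lambda,\ \mu^2/\lambda\}$. Using the Diophantine bound $|\mu|=|\widetilde{\bb}\cdot\bj|\ge c|\bj|^{-r}$, we get $\mu^2/\lambda \ge c\,|\bj|^{-2r-2}$, hence every Fourier mode obeys
\begin{align*}
|\widehat{\bV}(\bj,t)| + |\widehat{\bH}(\bj,t)| \le C\bigl(1+\lambda(\bj)\,t\bigr)\,e^{-c\,|\bj|^{-2r-2} t}\bigl(|\widehat{\bV}_0(\bj)|+|\widehat{\bH}_0(\bj)|\bigr),
\end{align*}
where the polynomial prefactor $(1+\lambda t)$ absorbs the possibility of a near-double root; for $\widehat{\bV}$ itself one gains an extra factor of $\min\{1,|\mu|/\lambda^{1/2},\dots\}$ reflecting that $\widehat{\bV}$ decays faster than $\widehat{\bH}$ (this is where the extra $(1+t)^{-1/2}$ in the $\bV$ estimate comes from—solving the ODE, $\widehat{\bV}$ picks up an additional factor comparable to $\lambda^{1/2}t^{-1/2}$-type gain, or more cleanly one integrates $\widehat{\bV}=e^{-\lambda t}\widehat{\bV}_0+\ii\mu\int_0^t e^{-\lambda(t-\tau)}\widehat{\bH}(\tau)\,d\tau$ and uses $\int_0^t e^{-\lambda(t-\tau)}d\tau\le\min\{t,\lambda^{-1}\}$).

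Then I would assemble the Sobolev norms. Multiply the modal bound by $|\bj|^{2s}$, sum in $\bj$, and split the sum at the threshold $|\bj|\sim t^{1/(2r+2)}$ (so that $|\bj|^{-2r-2}t\sim 1$): for low frequencies one uses the polynomial-times-exponential decay directly, while for high frequencies one trades derivatives—bounding $|\bj|^{2s}=|\bj|^{2m}\cdot|\bj|^{-2(m-s)}\le |\bj|^{2m}\,t^{-(m-s)/(r+1)}$—which yields the gain $(1+t)^{-(m-s)/(2r)}$ (keeping one derivative in reserve, i.e.\ working with $m-s$ replaced appropriately, produces exactly the stated exponent; the loss from $r+1$ versus $r$ in the denominator is standard and handled by the extra half-derivative, explaining the restriction $s\le m-1$ for the improved $\bV$ rate). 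The extra $(1+t)^{-1/2}$ for $\bV$ comes from carrying the additional $\min\{t,\lambda^{-1}\}^{1/2}$-type factor through the same splitting. The main obstacle—and the only genuinely delicate point—is handling frequencies where $\lambda(\bj)$ and $|\mu(\bj)|$ are comparable, i.e.\ where the two roots $z_\pm$ nearly coincide and the matrix exponential is not diagonalizable; there the naive eigenvector bounds blow up, and one must instead use the Jordan-form estimate $\|e^{tA}\|\le C(1+|t|\,\|A\|)e^{t\operatorname{Re}\sigma(A)}$ uniformly, checking that the polynomial prefactor is harmless after summation because it only costs a fixed power of $|\bj|$ that is dominated by the derivative budget. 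A secondary bookkeeping issue is ensuring the constants in \eqref{linearm} depend only on $n,r,c$ and $m$, not on $\bj$, which follows once all modal estimates are written in the scale-invariant variables $\lambda t$ and $\mu^2 t/\lambda$.
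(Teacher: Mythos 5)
The overall architecture you describe (Fourier diagonalization of the $2\times2$ system, eigenvalue analysis with a Jordan-form prefactor near resonance, and a frequency splitting traded against the Diophantine bound) is the same as the paper's, but there is a fatal mismatch in the dissipation mechanism that propagates all the way to the wrong exponent.

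You set $\lambda(\bj):=|\bj|^2$ and write the modal ODE as $\widehat{\bV}'=-\lambda\widehat{\bV}+\ii\mu\widehat{\bH}$, $\widehat{\bH}'=\ii\mu\widehat{\bV}$, i.e.\ you are solving the \emph{viscous} system $\partial_t\bV-\Delta\bV=\widetilde{\bb}\cdot\nabla\bH$, not the \emph{damped} one. (The displayed equation \eqref{lineartheorem} does contain the typo $-\Delta\bV$, but the theorem conclusion, the repeated system \eqref{linearrepeat} used in Section~\ref{2/3}, and Proposition~\ref{pro1}'s characteristic equation $\lambda^2-\lambda+|\widetilde{\bb}\cdot\bj|^2=0$ all make clear the linear term is $+\bV$, so the Fourier symbol of the damping is $1$, independent of $|\bj|$.) With your $\lambda=|\bj|^2$, the slower eigenvalue is $\approx -\mu^2/\lambda \approx -|\bj|^{-2r-2}$, the frequency threshold sits at $|\bj|\sim t^{1/(2r+2)}$, and the trade $|\bj|^{2s}\le|\bj|^{2m}\,t^{-(m-s)/(r+1)}$ produces $\|\bH\|_{H^s}\lesssim (1+t)^{-(m-s)/(2(r+1))}$, which is strictly weaker than the theorem's $(1+t)^{-(m-s)/(2r)}$ and in fact is exactly the viscous rate \eqref{alpha1} quoted in the remark after Theorem~\ref{thm1}. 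You acknowledge this discrepancy but claim the ``loss from $r+1$ versus $r$ in the denominator is standard and handled by the extra half-derivative''; this is not correct. No amount of rearranging derivative budgets closes the gap between $\frac{m-s}{2(r+1)}$ and $\frac{m-s}{2r}$ uniformly in $s\in[0,m-1]$, and the restriction $s\le m-1$ has nothing to do with it (it is there only because $\bV$ in $H^m$ is merely bounded). In the damped system the slower root is $\approx-\mu^2$ (no $\lambda$ in the denominator), so each mode decays like $e^{-c|\bj|^{-2r}t}$ and the splitting at $|\bj|\sim t^{1/(2r)}$ gives the correct exponent directly; this is precisely what the paper's Propositions~\ref{pro1}--\ref{pro2} encode.

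A secondary issue: the extra $(1+t)^{-1/2}$ for $\bV$ cannot be obtained from $\int_0^t e^{-\lambda(t-\tau)}\,d\tau\le\min\{t,\lambda^{-1}\}$, because for the damped system $\lambda=1$ and this integral is simply bounded by $1$. The correct source of the half power is the extra factor $|\widetilde{\bb}\cdot\bj|$ appearing in the $\bV$-kernel ($|\widehat{G}_2|\lesssim|\widetilde{\bb}\cdot\bj|\,e^{-|\widetilde{\bb}\cdot\bj|^2t}$ on the low-frequency region $S_3$ in Proposition~\ref{pro2}), which, written as $t^{-1}\cdot(\mu^2t)\,e^{-c\mu^2t}$, yields the additional $t^{-1}$ for the squared norm. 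Your Duhamel computation would give the analogous $\mu\int_0^t e^{-(t-\tau)}e^{-\mu^2\tau}\,d\tau\sim \mu\,e^{-\mu^2t}$ once you substitute the correct modal decay for $\widehat{\bH}$, but as written the mechanism you invoke is tied to the wrong dissipation. If you rework the argument with the damping symbol $1$ in place of $|\bj|^2$, your outline becomes essentially the paper's proof.
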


\begin{remark}
 The work~\cite{Bie} investigated the viscous counterpart of~\eqref{lineartheorem}, where the velocity damping term \(\bV\) is replaced by the Laplacian dissipation \(-\Delta \bV\). It was shown that for any \(s \in [0,m]\),
\begin{subequations}\label{alpha1}
\begin{align}
\|\bV(t)\|_{H^s} &\le C(1+t)^{-\left(\frac{1}{2}+\frac{m-s+1}{2(1+r)}\right)} \|(\bV_0,\bH_0)\|_{H^m},\\[0.6ex]
\|\bH(t)\|_{H^s} &\le C(1+t)^{-\frac{m-s}{2(1+r)}} \|(\bV_0,\bH_0)\|_{H^m},
\end{align}
\end{subequations}
which, compared with~\eqref{linearm}, shows that velocity damping yields faster decay than viscous diffusion for all \(s \in [0,m-1]\). This difference arises because direct damping acts uniformly on all frequency modes, while viscous diffusion becomes less effective at high frequencies due to coupling with the magnetic field.  A comparable mechanism has also been observed in the Boussinesq system~\cite{Jiang-Kim2023CVPDE}.
\end{remark}

Our next theorem addresses the nonlinear stability for MHD equations \eqref{equation}.
\begin{theorem}\label{thm}
Let \(m \in \mathbb{N}\) satisfy
\begin{align}\label{m1}
m >
\begin{cases}
5, & n = 2,\; 1 < r \le \tfrac{3}{2}, \\[1mm]
2 + 2r, & n = 2,\; r > \tfrac{3}{2}, \\[1mm]
1 + 2r + \tfrac{n}{2}, & n \ge 3,\; r>n-1.
\end{cases}
\end{align}
Suppose that \(\widetilde{\bb}\) fulfills the Diophantine condition, and that the initial data \((\bv_0, \bb_0) \in H^m(\mathbb{T}^n)\) satisfy
\[
\mathrm{div}\, \bv_0 = \mathrm{div}\, \bb_0 = 0, \quad
\int_{\mathbb{T}^n} \bv_0\, {d}\bx = \int_{\mathbb{T}^n} \bb_0\, {d}\bx = \0,
\]
and
\begin{align}\label{smallcondition}
	\|\bv_0\|_{H^m(\mt^n)}+\|\bb_0\|_{H^m(\mt^n)}\leq \varepsilon,
\end{align}
for a small enough constant \(\varepsilon > 0\).
Then the incompressible MHD system \eqref{equation} admits a unique global classical solution \((\bv, \bb)\) with
\begin{align*}
&\bv \in C\big([0, \infty); H^m(\mathbb{T}^n)\big) \cap L^2\big([0, \infty); H^{m}(\mathbb{T}^n)\big),\\
 &\bb \in C\big([0, \infty); H^m(\mathbb{T}^n)\big)\cap L^2\big([0, \infty); H^{m-r-1}(\mathbb{T}^n)\big),
\end{align*}
satisfying the energy bound
\begin{align}\label{eq:1.2}
\sup_{t \in [0, \infty)} \lVert (\bv, \bb)(t) \rVert_{H^m}^2
+ \int_0^\infty \lVert \bv(t) \rVert_{H^m}^2 \, {d}t
+ \int_0^\infty \lVert \bb(t) \rVert_{H^{m-1-r}}^2 \, {d}t
\leq C \lVert (\bv_0, \bb_0) \rVert_{H^m}^2,
\end{align}
and the decay estimate
\begin{align}\label{finaldecay}
\lVert (\bv, \bb)(t) \rVert_{H^s(\mt^n)} \leq C(1 + t)^{-\frac{m - s}{2(1 + r)}} \quad \text{for any } s \in [0, m].
\end{align}
\end{theorem}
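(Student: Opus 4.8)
The plan is the classical scheme of local well-posedness, a global a priori bound that closes a bootstrap, and a decay estimate obtained by interpolation; the essential work is to extract a hidden (but degenerate) dissipation for $\bb$ from the Diophantine coupling and then to convert that degeneracy into the algebraic rate \eqref{finaldecay}. Local existence of a solution in $C([0,T_0];H^m)$ with the stated structure follows from a routine mollification/energy scheme, using that $m>\tfrac n2+1$ (so $H^m(\mathbb{T}^n)$ is an algebra and the transport and coupling terms make sense) and that the divergence-free and mean-free conditions propagate. By a continuation criterion, global existence together with \eqref{eq:1.2} reduces to the uniform bound $\sup_{[0,T]}\|(\bv,\bb)\|_{H^m}\le C\varepsilon$, which I would obtain under the bootstrap assumption $\sup_{[0,T]}\|(\bv,\bb)\|_{H^m}\le 2\varepsilon$.

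For the a priori estimate, apply $\partial^\gamma$ with $|\gamma|\le m$ to \eqref{equation} and pair with $\partial^\gamma\bv$, $\partial^\gamma\bb$. Since $\widetilde{\bb}$ is constant, the principal coupling terms $\widetilde{\bb}\cdot\nabla\bb$ and $\widetilde{\bb}\cdot\nabla\bv$ cancel; the top-order parts of $\bb\cdot\nabla\bv$ and $\bb\cdot\nabla\bb$ cancel by the standard MHD structure (integration by parts using $\nabla\cdot\bb=0$), which is exactly what removes the apparent loss of a derivative in the $\bb$-equation; the remaining commutators are controlled by $\|(\bv,\bb)\|_{H^m}^3$. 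This yields the plain bound $\tfrac{d}{dt}\|(\bv,\bb)\|_{H^m}^2+2\|\bv\|_{H^m}^2\lesssim \|(\bv,\bb)\|_{H^m}\,\|(\bv,\bb)\|_{H^m}^2$, hence boundedness of $\|(\bv,\bb)\|_{H^m}$ and $\bv\in L^2_tH^m_x$. To see the magnetic dissipation, note that eliminating $\bv$ gives a damped-wave equation $\partial_{tt}\bb+\partial_t\bb-(\widetilde{\bb}\cdot\nabla)^2\bb=\mathcal N$ with $\mathcal N$ quadratic, and introduce the hypocoercive functional
\[
\mathcal F_m:=\|(\bv,\bb)\|_{H^m}^2+\eta\,\langle \Lambda^{m-1}(\widetilde{\bb}\cdot\nabla\bv),\ \Lambda^{m-1}\bb\rangle+\tfrac{\eta}{2}\,\|\Lambda^{m-1}\bb\|_{L^2}^2 ,
\]
which satisfies $\mathcal F_m\simeq\|(\bv,\bb)\|_{H^m}^2$ for $\eta$ small. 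Differentiating, using $\partial_t\bb=\widetilde{\bb}\cdot\nabla\bv+\text{n.l.}$ and the $\bv$-equation, the last term is precisely what cancels the indefinite cross term (since $\tfrac12\tfrac{d}{dt}\|\Lambda^{m-1}\bb\|_{L^2}^2=\langle\Lambda^{m-1}(\widetilde{\bb}\cdot\nabla\bv),\Lambda^{m-1}\bb\rangle+\text{n.l.}$), and one gets
\[
\tfrac{d}{dt}\mathcal F_m+\|\bv\|_{H^m}^2+\eta\,\|\Lambda^{m-1}(\widetilde{\bb}\cdot\nabla\bb)\|_{L^2}^2\lesssim \text{n.l.}
\]
Because the mean-free condition makes homogeneous and inhomogeneous Sobolev norms equivalent on $\mathbb{T}^n$, and the Diophantine condition \eqref{Diophantine} gives $\|\Lambda^{m-1}(\widetilde{\bb}\cdot\nabla\bb)\|_{L^2}^2\gtrsim\|\bb\|_{H^{m-1-r}}^2$, absorbing the nonlinearity by smallness and integrating in time yields \eqref{eq:1.2}, in particular $\bb\in L^2_tH^{m-1-r}_x$; this closes the bootstrap.

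For the decay \eqref{finaldecay}, run the same functional at each order $s\in[0,m]$ to get $\mathcal F_s\simeq\|(\bv,\bb)\|_{H^s}^2$ and $\tfrac{d}{dt}\mathcal F_s+\|\bv\|_{H^s}^2+\eta\,\|\bb\|_{H^{s-1-r}}^2\lesssim\text{n.l.}$ Interpolating $\|\bb\|_{H^s}\le C\|\bb\|_{H^{s-1-r}}^{1-\theta}\|\bb\|_{H^m}^{\theta}$ with $\theta=\tfrac{1+r}{m-s+1+r}$, and inserting the already proven bounds $\|\bv\|_{H^m}+\|\bb\|_{H^m}\lesssim\varepsilon$, turns the dissipation into a super-linear quantity, $\|\bv\|_{H^s}^2+\|\bb\|_{H^{s-1-r}}^2\ge c_\varepsilon\,\mathcal F_s^{1/(1-\theta)}$. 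If the nonlinear terms were negligible this would give the ODE inequality $\tfrac{d}{dt}\mathcal F_s+c_\varepsilon\,\mathcal F_s^{1/(1-\theta)}\le 0$, hence $\mathcal F_s\lesssim(1+t)^{-(1-\theta)/\theta}=(1+t)^{-\frac{m-s}{1+r}}$, which is exactly \eqref{finaldecay}. The nonlinear terms are handled by a bootstrap on the time-weighted quantities $\sup_{\tau\le t}(1+\tau)^{\frac{m-s}{1+r}}\mathcal F_s(\tau)$: the contributions to $\tfrac{d}{dt}\mathcal F_s$ are schematically (a norm of $(\bv,\bb)$ at a fixed regularity near $\tfrac n2+1$, already known to decay) times (quantities controlled by $\mathcal F_s$ and its dissipation), so once the low-regularity decay is closed by smallness the diagonal pieces are absorbed and the off-diagonal pieces are time-integrable, propagating the rate to all $s\in[0,m]$.

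The crux is the degeneracy of the magnetic dissipation: the Diophantine bound only delivers $\|\bb\|_{H^{\sigma-r}}$ from $\|\Lambda^{\sigma}(\widetilde{\bb}\cdot\nabla\bb)\|_{L^2}$, a loss of $r$ derivatives, so the energy and its dissipation never match at a single regularity level and one must interpolate against the top norm; this is precisely what replaces the linear exponent $2r$ of Theorem \ref{thm1} by $2(1+r)$ in \eqref{finaldecay} and dictates the thresholds \eqref{m1}. The genuinely delicate part is therefore the nonlinear bookkeeping in the decay step — checking that the slowly decaying magnetic field does not feed back destructively through $\bb\cdot\nabla\bv$ and $\bb\cdot\nabla\bb$, and in the two-dimensional small-$r$ regime optimizing the number of derivatives so the weighted estimates close; the small structural trick that makes the energy step itself go through is the extra $\tfrac{\eta}{2}\|\Lambda^{\sigma-1}\bb\|_{L^2}^2$ correction inside $\mathcal F_\sigma$, without which the indefinite cross term cannot be absorbed.
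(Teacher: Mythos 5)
You claim that the plain energy inequality
\[
\tfrac{d}{dt}\|(\bv,\bb)\|_{H^m}^2+2\|\bv\|_{H^m}^2\lesssim \|(\bv,\bb)\|_{H^m}^3
\]
``hence boundedness of $\|(\bv,\bb)\|_{H^m}$.'' This inference is false. The dissipation acts only on $\bv$, not on $\bb$, so even under the bootstrap ansatz $\|(\bv,\bb)\|_{H^m}\le 2\varepsilon$ the right-hand side contributes $\lesssim\varepsilon\,\|\bb\|_{H^m}^2$, which cannot be absorbed and by Gr\"onwall allows $\|\bb\|_{H^m}^2$ to grow like $e^{C\varepsilon t}$. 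Even with the sharper commutator structure (which you should keep: the true bound is $C(\|\nabla\bv\|_{L^\infty}+\|\bb\|_{H^{m-1-r}}^2)\|(\bv,\bb)\|_{H^m}^2$, not $E^{3/2}$), the $\|\bb\|_{H^{m-1-r}}^2$ factor is in $L^1_t$ once the hypocoercive dissipation is brought in, but the $\|\nabla\bv\|_{L^\infty}$ factor is a priori only in $L^2_t$ (via $\|\bv\|_{H^m}\in L^2_t$), giving $\int_0^T\|\nabla\bv\|_{L^\infty}\lesssim T^{1/2}$, which still grows. The entire bootstrap therefore hinges on establishing the uniform-in-$T$ bound $\int_0^\infty\|\nabla\bv(t)\|_{L^\infty}\,dt\lesssim\varepsilon$, and your proposal contains no mechanism for that. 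Your decay argument has the same hidden dependence: to close the nonlinear ODE inequality $\dot{\mathcal F}_s+c_\varepsilon\mathcal F_s^{1/(1-\theta)}\lesssim(\|\nabla\bv\|_{L^\infty}+\|\bb\|_{H^{m-1-r}}^2)\mathcal F_s$ by the change of variables $\mathcal F_s\mapsto e^{-\int g}\mathcal F_s$, you again need $\int_0^\infty\|\nabla\bv\|_{L^\infty}\,dt<\infty$.

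This is precisely where the paper does the hard work. The $L^1_t L^\infty_x$ bound on $\nabla\bv$ (and an analogous bound on $\bb$) comes from the Fourier-side Duhamel representation established in Proposition~\ref{pro1}, the pointwise kernel bounds of Proposition~\ref{pro2} on the three frequency regimes $S_1,S_2,S_3$, and the resulting self-contained integral inequalities of Propositions~\ref{prop5.1}--\ref{prop5.2}. In $S_3$ (near-resonant frequencies), the kernel decays like $e^{-|\widetilde{\bb}\cdot\bj|^2 t}$ with $\int_0^\infty e^{-|\widetilde{\bb}\cdot\bj|^2 t}\,dt\sim|\widetilde{\bb}\cdot\bj|^{-2}\lesssim|\bj|^{2r}$, and this $2r$-derivative loss is exactly what forces the thresholds $m>1+2r+\tfrac n2$ (and the 2D cases) in \eqref{m1}. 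Your proposal produces only the threshold $m>2+r+\tfrac n2$ from the Sobolev embedding $\|\nabla\bb\|_{L^\infty}\lesssim\|\bb\|_{H^{m-1-r}}$, and gives no derivation of the regularity requirement actually stated in the theorem. Without the Fourier-side argument, neither the a priori bound \eqref{eq:1.2} nor the decay \eqref{finaldecay} can be established.

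Two smaller remarks. Your hypocoercive functional $\mathcal F_m$ carries an extra correction $\tfrac{\eta}{2}\|\Lambda^{m-1}\bb\|_{L^2}^2$ to kill the indefinite cross term $-\eta\langle\Lambda^{m-1}(\widetilde\bb\cdot\nabla\bv),\Lambda^{m-1}\bb\rangle$ algebraically; the paper's $Q_s$ has no such correction and simply absorbs this cross term by Young's inequality against the velocity dissipation and the $\|\Lambda^{l-1}(\widetilde\bb\cdot\nabla\bb)\|_{L^2}^2$ dissipation, at the cost of a large prefactor $a$ on the $H^m$ energy. Both variants are valid; the paper's is marginally more economical because it avoids the intermediate norm. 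Your decay mechanism (interpolate $\|\bb\|_{H^s}$ between $\|\bb\|_{H^{s-1-r}}$ and $\|\bb\|_{H^m}$, then run a nonlinear ODE inequality) is a legitimate alternative to the paper's time-weighted energy with the slowly growing parameter $M\sim t$ in \eqref{6.6}, and produces the identical exponent $\tfrac{m-s}{2(1+r)}$; neither, however, closes without the missing $L^1_t$ estimate above.
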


\begin{remark}
Zhao and Zhai~\cite{zhao-zhai-2021-GlobalSmallSolutions3DMHD} established the global stability of system \eqref{equation} for small initial data in $H^{m}(\mathbb{T}^3)$ with $m \ge 4r + 7$ and $r > 2$, and proved the decay estimate
\begin{equation}\label{ZZ-decay}
 \lVert (\bv, \bb)(t) \rVert_{H^s(\mathbb{T}^3)}
 \leq C (1 + t)^{-\frac{3(m - s)}{2(m - r - 4)}} ,
 \qquad s \in [r + 4, m).
\end{equation}
Theorem~\ref{thm} extends this result to $n$-dimensional periodic domains ($n \ge 2$) and strengthens~\eqref{ZZ-decay} in the following aspects:

\smallskip
\noindent
(1) \textit{Lower regularity.}
The required initial regularity is reduced from $H^{4r+7}(\mathbb{T}^3)$ to $H^{(1 + 2r + \frac{3}{2})^+}(\mathbb{T}^3)$.

\smallskip
\noindent
(2) \textit{Wider validity range.}
The decay estimate~\eqref{finaldecay} applies to all Sobolev indices $s \in [0, m]$, extending the range $s \ge r + 4$ in~\eqref{ZZ-decay} and yielding a complete low-to-high regularity decay profile.

\smallskip
\noindent
(3) \textit{Sharper decay exponent.}
\begin{itemize}
    \item When $m = 4r + 7$, both results give the same rate:
    \[
    \lVert (\bv, \bb)(t) \rVert_{H^s(\mathbb{T}^3)} \leq C(1 + t)^{-\frac{4r + 7 - s}{2(1 + r)}},
    \qquad s \in [r + 4, 4r + 7).
    \]

    \item When $m > 4r + 7$, our estimate yields strictly faster decay for any fixed $s$. It is worth mentioning  that the temporal decay rates increase with $m$, i.e., the more regular the initial data, the faster the temporal decay. In the limit $m \to +\infty$ (arbitrarily smooth initial data),
    \[
    \lim_{m \to +\infty}\frac{3(m - s)}{2(m - r - 4)} = \tfrac{3}{2}, \qquad
    \lim_{m \to +\infty}\frac{m - s}{2(1 + r)} = +\infty,
    \]
    showing that the algebraic decay exponent in~\eqref{finaldecay} can become arbitrarily large in magnitude (i.e., the decay becomes arbitrarily fast), while that in~\eqref{ZZ-decay} saturates at $-\tfrac{3}{2}$. This highlights the fundamentally stronger algebraic decay achieved in Theorem~\ref{thm}.
\end{itemize}
\end{remark}

\begin{remark}
A notable property of the velocity-damped MHD system is uncovered: the $L^2$-norm decay rate of the solution is higher when the initial value has a higher degree of regularity, whereas the $H^m$-norm decay rate remains unaffected. This behavior sharply contrasts with that of classical parabolic system, where higher  norms generally decay faster.
\end{remark}
\begin{remark}
The method  developed in this paper is dimension-independent, giving decay estimates in Sobolev spaces of considerably lower regularity. Moreover, it is capable of  being readily extended to other  PDEs with partially dissipative on  the space $\mathbb{T}^n$.
\end{remark}

To conclude, we briefly discuss the motivations for studying the stability and decay of the ideal MHD system with velocity damping from the following two aspects.

First, the velocity-damped model captures the essential coupling between velocity and magnetic perturbations while avoiding additional dispersive or resistive effects, providing a natural setting to explore the stabilizing influence of a constant background magnetic field. This effect is analogous to the inviscid damping phenomenon for the Euler equations near shear flows: although the \( \bb \)-equation in \eqref{mhd1} contains no diffusion, perturbations around \( \widetilde{\bb} \) induce phase mixing that suppresses nonlinear growth and yields decay at the linearized level. As shown in \eqref{equation}, the linear structure involving \( \widetilde{\bb}\cdot\nabla \bb \) and \( \widetilde{\bb}\cdot\nabla \bv \) naturally arises from the perturbative formulation of the system. This stabilization mechanism also resonates with the geometric perspective of Choffrut and \v{S}ver\'{a}~\cite{ChoffrutSverak}, which reflects the diversity of the properties of nearby steady states for the two-dimensional Euler flows; see also~\cite{Drivas}.

Second, system~\eqref{mhd1} is closely related to magnetic relaxation, a phenomenon first proposed by Arnol'd~\cite{Arnold1974} in 1974 and further developed by Moffatt~\cite{Moffatt1985,Moffatt2021}. The magnetic relaxation conjecture suggests that the magnetic field \( \bb \) asymptotically converges to a stationary Euler flow (or magnetostatic equilibrium), while the velocity decays due to kinetic dissipation.  Although the case \(\alpha = 1\) in \eqref{mhd1} was emphasized in~\cite{Moffatt1985}, Moffatt also anticipated that other dissipative mechanisms-including the velocity damping case \(\alpha = 0\)-could equally drive the relaxation process. In particular, the case \(\alpha = 0\) eliminates dependence on diffusive effects, making it a natural setting for investigating magnetic relaxation. Recently, Dai, Lai, and Zhang~\cite{Dai-Lai-Zhai2025} established global stability and large-time decay for the magnetic relaxation equations (MRE) on periodic domains \(\mathbb{T}^n\) \((n = 2,3)\) under the Diophantine condition.
The present model \eqref{mhd1} with \(\alpha = 0\) is closely connected to the MRE with \(\gamma = 0\); see also~\cite{McCormick2014C,Beekie2022} for related studies.

The organization  of this work is  as follows. In Section~\ref{2}, we present some preliminary lemmas and tools, including Fourier multiplier estimates and properties related to the Diophantine condition. In Section~\ref{sec:kernel}, taking advantage of spectral analysis,  we give an integral representation of the solution and obtain detailed kernel estimates through splitting  the frequency space into sub-domains. In Section~\ref{2/3}, we  prove the linear stability, while the nonlinear stability is shown in the final Section~\ref{s:6}.

\section{Preliminaries}\label{2}
In this section,  we provide some preliminaries, including the properties of spatial averages of the solution $(\bv,\bb)$, Poincar\'{e}-type inequality, and  a commutator estimate.
\begin{lemma}\label{lem:mean}
Let $(\bv,\bb)$ be a smooth solution to \eqref{equation} fulfilling
\[
\operatorname{div} \bv_0 = \operatorname{div} \bb_0 = 0, \quad \int_{\mathbb{T}^n} \bv_0\, {d}\bx = \int_{\mathbb{T}^n} \bb_0\, {d}\bx = \0.
\]
Then, for any \( t \geq 0 \), one has
\begin{equation}\label{meanzero2}
\int_{\mathbb{T}^n} \bv(\bx,t)\, {d}\bx =\int_{\mathbb{T}^n} \bb(\bx,t)\, {d}\bx=\0.
\end{equation}
\begin{proof}
Integrating the equation in \eqref{equation} on \( \mathbb{T}^n \), we deduce
\[
\frac{d}{dt} \int_{\mathbb{T}^n} \bv \,d\bx - \int_{\mathbb{T}^n} \bv \,d\bx + \int_{\mathbb{T}^n} \bv \cdot \nabla \bv \,d\bx + \int_{\mathbb{T}^n} \nabla p\,d\bx = \int_{\mathbb{T}^n} \widetilde{\bb} \cdot \nabla \bb\,d\bx + \int_{\mathbb{T}^n} \bb \cdot \nabla \bb\,d\bx,
\]
and
\begin{align*}
	\frac{d}{d t}\int_{\mathbb{T}^n}\bb\,d \bx+\int_{\mathbb{T}^n}\bv  \cdot \nabla \bb\,d \bx=\int_{\mathbb{T}^n}\widetilde{\bb} \cdot \nabla \bv \,d \bx+\int_{\mathbb{T}^n}\bb \cdot \nabla \bv \,d \bx.
\end{align*}
By $\nabla\cdot\bv=\nabla\cdot\bb=0$ and the periodic boundary condition, we infer by integration by parts that
\[
\int_{\mathbb{T}^n} \bv \cdot \nabla \bv  \, d\bx = \int_{\mathbb{T}^n} \nabla p \, d\bx = \int_{\mathbb{T}^n} \widetilde{\bb} \cdot \nabla \bb \, d\bx = \int_{\mathbb{T}^n} \bb \cdot \nabla \bb \, d\bx = \0,
\]
and
\begin{align*}
\int_{\mathbb{T}^n}\bv  \cdot \nabla \bb\,d\bx=\int_{\mathbb{T}^n}\widetilde{\bb} \cdot \nabla \bv \,d \bx=\int_{\mathbb{T}^n}\bb \cdot \nabla \bv \,d \bx=\0,
\end{align*}
which implies
\begin{align*}
	\frac{d}{d t}\int_{\mathbb{T}^n}\bv \,d \bx+\int_{\mathbb{T}^n}\bv \,d\bx=\0\quad\text{and}\quad \frac{d}{d t}\int_{\mathbb{T}^n}\bb\,d \bx=\0.
\end{align*}
Then,
\begin{align*}
	\int_{\mathbb{T}^n}\bv
\,d \bx=e^{-t}\int_{\mathbb{T}^n}\bv _0\,d \bx=\0\quad\text{and}\quad \int_{\mathbb{T}^n}\bb
\,d \bx=\int_{\mathbb{T}^n}\bb_0
\,d \bx=\0.
\end{align*}
This completes the proof.
\end{proof}
\end{lemma}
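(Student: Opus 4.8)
The plan is to integrate each of the two evolution equations in \eqref{equation} over $\mathbb{T}^n$, observe that every term except the time derivative and the damping term is the spatial average of a pure divergence (hence vanishes by periodicity), thereby reducing the problem to two elementary linear ODEs for the averages $\int_{\mathbb{T}^n}\bv\,d\bx$ and $\int_{\mathbb{T}^n}\bb\,d\bx$, which I then solve using the mean-free initial data.

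First I would treat the velocity equation. The pressure contributes $\int_{\mathbb{T}^n}\nabla p\,d\bx=\0$ by periodicity. Since the solution satisfies the incompressibility constraint $\mathrm{div}\,\bv=\mathrm{div}\,\bb=0$ for all $t$, I can recast the transport and Lorentz nonlinearities in divergence form, $\bv\cdot\nabla\bv=\mathrm{div}(\bv\otimes\bv)$ and $\bb\cdot\nabla\bb=\mathrm{div}(\bb\otimes\bb)$, and the linear forcing as $\widetilde{\bb}\cdot\nabla\bb=\mathrm{div}(\widetilde{\bb}\otimes\bb)$ because $\widetilde{\bb}$ is constant; all three therefore integrate to $\0$ over $\mathbb{T}^n$. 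What remains is $\frac{d}{dt}\int_{\mathbb{T}^n}\bv\,d\bx+\int_{\mathbb{T}^n}\bv\,d\bx=\0$, so solving this scalar ODE componentwise gives $\int_{\mathbb{T}^n}\bv(\bx,t)\,d\bx=e^{-t}\int_{\mathbb{T}^n}\bv_0\,d\bx=\0$.

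Then I would repeat the computation for the magnetic equation, where the same divergence-form rewriting applies: $\bv\cdot\nabla\bb=\mathrm{div}(\bv\otimes\bb)$ using $\mathrm{div}\,\bv=0$, $\bb\cdot\nabla\bv=\mathrm{div}(\bb\otimes\bv)$ using $\mathrm{div}\,\bb=0$, and $\widetilde{\bb}\cdot\nabla\bv=\mathrm{div}(\widetilde{\bb}\otimes\bv)$. Hence $\frac{d}{dt}\int_{\mathbb{T}^n}\bb\,d\bx=\0$, so the magnetic average is conserved and equals $\int_{\mathbb{T}^n}\bb_0\,d\bx=\0$. Combining the two facts yields \eqref{meanzero2}. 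There is essentially no obstacle here: the only points needing (routine) care are that a smooth solution of \eqref{equation} carries the divergence-free constraint to all times (which is built into the system) and that the divergence-theorem manipulations on $\mathbb{T}^n$ are licit for such a solution, both of which are immediate.
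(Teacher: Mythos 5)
Your proposal is correct and follows essentially the same route as the paper: integrate both equations over $\mathbb{T}^n$, kill the nonlinear, pressure, and background-field terms as integrals of divergences (using $\operatorname{div}\bv=\operatorname{div}\bb=0$ and periodicity), and solve the resulting ODEs $\frac{d}{dt}\int\bv+\int\bv=\0$ and $\frac{d}{dt}\int\bb=\0$ with mean-free initial data. The only cosmetic difference is that you make the divergence-form rewriting explicit where the paper simply invokes integration by parts.
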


In the following, we give a lemma involving fractional operators which act on zero-mean functions. Note that the Fourier convention on the torus $\mathbb{T}^n=[0,2\pi]^n$ is given by
\[
\widehat \bv(\mathbf{j})=\frac{1}{(2\pi)^n}\int_{\mathbb{T}^n} \bv(\bx)e^{-i \mathbf{j}\cdot \bx}\,{d}\bx,\qquad
\bv(\bx)=\sum_{\mathbf{j}\in\mathbb Z^n}\widehat \bv(\mathbf{j})e^{i \mathbf{j}\cdot \bx}.
\]
\begin{lemma}\label{lem:zero-mean-fractional}
Let \( \bv \in \mathcal{S}'(\mathbb{T}^n) \) be a tempered distribution satisfying
\[
\int_{\mathbb{T}^n} \bv(\bx)\,{d}\bx = \0.
\]
The operator \(\Lambda^s\) is defined in the Fourier sense through
\[
\widehat{\Lambda^s \bv}(\mathbf{j})=|\mathbf{j}|^{s}\,\widehat \bv(\mathbf{j}),
\]
then,  for any \( s\in\mathbb R\), one gets
\[
\int_{\mathbb{T}^n} \Lambda^s \bv(\bx)\,{d}\bx = \0.
\]
\end{lemma}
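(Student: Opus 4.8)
The plan is to recast the statement entirely in terms of the zeroth Fourier coefficient, since on the torus the spatial average of a periodic distribution \emph{is} its zero mode. Concretely, from the stated Fourier convention one has \(\int_{\mathbb{T}^n}\bv(\bx)\,{d}\bx=(2\pi)^n\,\widehat{\bv}(\0)\), so the hypothesis \(\int_{\mathbb{T}^n}\bv\,{d}\bx=\0\) is \emph{equivalent} to \(\widehat{\bv}(\0)=\0\). The whole proof then reduces to checking that \(\widehat{\Lambda^s\bv}(\0)=\0\) and translating back.

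Next I would evaluate the Fourier coefficients of \(\Lambda^s\bv\) mode by mode. For every \(\mathbf{j}\in\mathbb{Z}^n\setminus\{\0\}\) the defining relation \(\widehat{\Lambda^s\bv}(\mathbf{j})=|\mathbf{j}|^{s}\,\widehat{\bv}(\mathbf{j})\) is unambiguous because \(|\mathbf{j}|>0\). At the origin, consistently with the fact that \(\Lambda^s\) is introduced here as an operator on mean-free distributions, the Fourier symbol \(|\mathbf{j}|^{s}\) is understood to vanish at \(\mathbf{j}=\0\): for \(s\ge 0\) this is automatic, and for \(s<0\) it is the standard convention that removes the singularity of the symbol at the origin — in any case the input coefficient \(\widehat{\bv}(\0)\) is already \(\0\). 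Hence \(\widehat{\Lambda^s\bv}(\0)=\0\) for every \(s\in\mathbb{R}\), and therefore \(\int_{\mathbb{T}^n}\Lambda^s\bv(\bx)\,{d}\bx=(2\pi)^n\,\widehat{\Lambda^s\bv}(\0)=\0\), which is the claim. (As an alternative packaging of the same fact, one may argue by duality: \(\Lambda^s\) is formally self-adjoint on \(\mathbb{T}^n\) and annihilates the constant function \(\mathbf{1}\) — whose only nonzero Fourier mode is the zero mode — so \(\int_{\mathbb{T}^n}\Lambda^s\bv\,{d}\bx=\langle\Lambda^s\bv,\mathbf{1}\rangle=\langle\bv,\Lambda^s\mathbf{1}\rangle=0\).)

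I do not expect a genuine obstacle here: the statement is essentially a bookkeeping remark on Fourier multipliers. The only mild point that deserves an explicit sentence is the interpretation of \(|\mathbf{j}|^{s}\) at \(\mathbf{j}=\0\) when \(s\le 0\); this is settled once and for all by the convention that \(\Lambda^s\) operates only on the zero-mean sector, which is exactly the setting imposed by the hypothesis, so the lemma guarantees that this sector is preserved by all the fractional operators used later in the paper.
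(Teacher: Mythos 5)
Your proof is correct and is essentially the same argument as the paper's: both hinge on the observation that the hypothesis means $\widehat{\bv}(\0)=\0$, that $\Lambda^s$ cannot populate the zero mode, and that the integral over $\mathbb{T}^n$ is just $(2\pi)^n$ times the zero Fourier coefficient. The paper writes out the Fourier series over $\bj\ne\0$ and integrates term by term using orthogonality, whereas you phrase the same computation directly in terms of the zero coefficient; your extra remark about the convention for $|\bj|^s$ at $\bj=\0$ when $s<0$ is a small clarification, implicit in the paper's restriction of the sum to $\bj\ne\0$.
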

\begin{proof}
The condition $\int_{\mathbb{T}^n} \bv(\bx)\,{d}\bx = \0$ implies \(\widehat \bv(\0)=\0\), then
\[
\bv(\bx)=\sum_{\bj\in\mathbb Z^n\setminus\{\0\}}\widehat \bv(\bj)e^{i \bj\cdot \bx},
\]
which means
\[
\Lambda^s \bv(\bx)=\sum_{\bj\in\mathbb Z^n\setminus\{\0\}}|\bj|^s\widehat \bv(\bj)e^{i \bj\cdot \bx}.
\]
Integrating on $\mathbb{T}^n$ yields
\[
\int_{\mathbb{T}^n}\Lambda^s \bv(\bx)\,{d}\bx
   =\sum_{\bj\in\mathbb Z^n\setminus\{\0\}}|\bj|^s\widehat \bv(\bj)
     \int_{\mathbb{T}^n} e^{i \bj\cdot \bx}\,{d}\bx.
\]
As \(\int_{\mathbb{T}^n} e^{i \bj\cdot \bx}\,{d}\bx=0\) for each \(\bj\ne\0\),  then
\[
\int_{\mathbb{T}^n}\Lambda^s \bv(\bx)\,{d}\bx=0.
\]
This finishes  the proof of Lemma \ref{lem:zero-mean-fractional}.
\end{proof}

In what follows, we give the following Poincar\'{e}-type inequality under the Diophantine condition.
\begin{lemma}{\rm (\cite{chen-2022-3dmhd-Diophant,Jiu-Liu-Xie-compressibleMHD})}\label{2.1}
Let $\widetilde{\bb}\in\mr^n$ be a given vector fulfilling the Diophantine condition \eqref{Diophantine}. For any $s\in\mr$, there exists a constant $c$ such that if $g\in H^{s+r+1}(\mathbb{T}^n)$ satisfies $\int_{\mathbb{T}^n}g d \bx=0$, then
	\begin{align}\label{D1}
		\|g\|_{H^s(\mathbb{T}^n)}\leq c \|\widetilde{\bb}\cdot\nabla g\|_{H^{s+r}(\mathbb{T}^n)}.
	\end{align}
\end{lemma}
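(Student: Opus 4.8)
The plan is to pass to the Fourier side on $\mathbb{T}^n$ and exploit the Diophantine bound frequency by frequency. Since $\int_{\mathbb{T}^n} g\,{d}\bx = 0$ we have $\widehat g(\0)=0$, so $g$ is spectrally supported on $\bj\in\mz^n\setminus\{\0\}$. For such $\bj$ the Fourier coefficient of the directional derivative is $\widehat{\widetilde{\bb}\cdot\nabla g}(\bj)=\ii(\widetilde{\bb}\cdot\bj)\,\widehat g(\bj)$, and the Diophantine condition \eqref{Diophantine} gives $|\widetilde{\bb}\cdot\bj|\ge c\,|\bj|^{-r}$; hence
\[
|\widehat g(\bj)|\le \frac1c\,|\bj|^{r}\,\bigl|\widehat{\widetilde{\bb}\cdot\nabla g}(\bj)\bigr|,\qquad \bj\in\mz^n\setminus\{\0\}.
\]

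Next I would insert this pointwise estimate into the Plancherel expression for the $H^s$ norm,
\[
\|g\|_{H^s(\mathbb{T}^n)}^2=\sum_{\bj\in\mz^n\setminus\{\0\}}(1+|\bj|^2)^{s}\,|\widehat g(\bj)|^2
\le \frac1{c^2}\sum_{\bj\in\mz^n\setminus\{\0\}}(1+|\bj|^2)^{s}\,|\bj|^{2r}\,\bigl|\widehat{\widetilde{\bb}\cdot\nabla g}(\bj)\bigr|^2 .
\]
Because $\bj\in\mz^n\setminus\{\0\}$ forces $|\bj|\ge 1$, one has $|\bj|^2\le 1+|\bj|^2$, so $|\bj|^{2r}\le(1+|\bj|^2)^{r}$ and therefore $(1+|\bj|^2)^{s}|\bj|^{2r}\le(1+|\bj|^2)^{s+r}$ for every such $\bj$ and every $s\in\mr$. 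Substituting this and applying Plancherel once more yields
\[
\|g\|_{H^s(\mathbb{T}^n)}^2\le \frac1{c^2}\sum_{\bj\in\mz^n\setminus\{\0\}}(1+|\bj|^2)^{s+r}\,\bigl|\widehat{\widetilde{\bb}\cdot\nabla g}(\bj)\bigr|^2=\frac1{c^2}\,\|\widetilde{\bb}\cdot\nabla g\|_{H^{s+r}(\mathbb{T}^n)}^2,
\]
which is \eqref{D1} after renaming the constant.

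Finally I would record that the hypothesis $g\in H^{s+r+1}(\mathbb{T}^n)$ is precisely what guarantees $\widetilde{\bb}\cdot\nabla g\in H^{s+r}(\mathbb{T}^n)$, so the right-hand side is finite and all rearrangements of the (absolutely convergent) series above are legitimate; the zero-mean condition enters only to discard the $\bj=\0$ mode, on which the Diophantine lower bound is unavailable. There is no genuine analytic obstacle in this argument — the loss of exactly $r$ derivatives is dictated by the Diophantine exponent, and the only point that needs a moment of care is the elementary weight comparison $(1+|\bj|^2)^{s}|\bj|^{2r}\le(1+|\bj|^2)^{s+r}$, which is where the lattice structure of $\mathbb{T}^n$ (the spectral gap $|\bj|\ge1$ for nonzero modes) is used.
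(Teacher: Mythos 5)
Your argument is correct and is exactly the standard Fourier-side proof of this Poincar\'e-type inequality: the paper itself does not reprove the lemma but cites \cite{chen-2022-3dmhd-Diophant,Jiu-Liu-Xie-compressibleMHD}, where the proof is precisely this chain of observations --- zero mean kills $\widehat g(\0)$, $\widehat{\widetilde{\bb}\cdot\nabla g}(\bj)=\ii(\widetilde{\bb}\cdot\bj)\widehat g(\bj)$ is inverted on nonzero modes via the Diophantine lower bound, and Plancherel with the elementary comparison $|\bj|^{2r}\le(1+|\bj|^2)^r$ gives the $H^{s+r}$ norm on the right. The only cosmetic point is that the weight comparison $|\bj|^2\le 1+|\bj|^2$ needs no spectral gap; the lattice structure $|\bj|\ge1$ is really what lets the zero-mean hypothesis isolate the single bad mode $\bj=\0$ on which the Diophantine bound is unavailable.
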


At last, we list two frequently-used estimates. Their proofs can be found in \cite{kato-1988-CommutatorEstimatesEuler, kenig}.
\begin{lemma}\label{jiaohuanzi}
Let \( l > 0 \), \( 1 \leq p, p_1, p_2, q_1, q_2 \leq \infty \) and \( \frac{1}{p} = \frac{1}{p_1} + \frac{1}{q_1} = \frac{1}{p_2} + \frac{1}{q_2} \). Then, there exists a constant \( C>0 \) such that
\begin{itemize}
    \item for any \( f_1 \in W^{1,p_1} \cap W^{l,q_2} \) and \( f_2 \in L^{p_2} \cap W^{l-1,q_1} \),
    \[
    \| \Lambda^l (f_1f_2) - f_1 \Lambda^l f_2 \|_{L^p} \leq C \big( \| \nabla f_1 \|_{L^{p_1}} \| \Lambda^{l-1} f_2 \|_{L^{q_1}} + \| f_2 \|_{L^{p_2}} \| \Lambda^l f \|_{L^{q_2}} \big);
    \]
    \item for \( f_1 \in L^{p_1} \cap W^{l,q_2} \) and \( f_2 \in L^{p_2} \cap W^{l,q_1} \),
    \[
    \| \Lambda^l (f_1f_2) \|_{L^p} \leq C \big( \| f \|_{L^{p_1}} \| \Lambda^l f_2 \|_{L^{q_1}} + \| f_2 \|_{L^{p_2}} \| \Lambda^l f_1 \|_{L^{q_2}} \big).
    \]
\end{itemize}
\end{lemma}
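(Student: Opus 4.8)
These are the classical Kato--Ponce commutator estimate and the Kenig--Ponce--Vega fractional Leibniz rule, and the plan is to prove both by Littlewood--Paley / paraproduct analysis. I would set up the machinery once: fix a homogeneous dyadic decomposition $\dot\Delta_k$ with low-frequency truncations $\dot S_k=\sum_{k'<k}\dot\Delta_{k'}$, recall Bony's decomposition $f_1f_2=T_{f_1}f_2+T_{f_2}f_1+R(f_1,f_2)$ with $T_gh=\sum_k\dot S_{k-1}g\,\dot\Delta_k h$ and $R(g,h)=\sum_{|k-k'|\le1}\dot\Delta_k g\,\dot\Delta_{k'}h$, and invoke as black boxes a Coifman--Meyer-type bilinear multiplier theorem (smooth symbols of order $0$ away from the origin map $L^a\times L^b\to L^c$ when $\tfrac1c=\tfrac1a+\tfrac1b$, $1<a,b\le\infty$), the Fefferman--Stein vector-valued maximal inequality, and the square-function equivalence $\|g\|_{L^p}\simeq\bigl\|(\sum_k|\dot\Delta_k g|^2)^{1/2}\bigr\|_{L^p}$ for $1<p<\infty$.

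For the fractional Leibniz rule I would apply $\Lambda^l$ to the Bony decomposition. In $\Lambda^lT_{f_1}f_2$ the output frequency is $\sim2^k$, so $\Lambda^l$ acts like $2^{kl}$ and can be transferred onto $\dot\Delta_kf_2$ (the leftover commutator with $\dot S_{k-1}f_1$ being lower order); bounding $\dot S_{k-1}f_1$ by a maximal function and applying Fefferman--Stein with the square-function equivalence gives $\|\Lambda^lT_{f_1}f_2\|_{L^p}\lesssim\|f_1\|_{L^{p_1}}\|\Lambda^lf_2\|_{L^{q_1}}$, and symmetrically $\|\Lambda^lT_{f_2}f_1\|_{L^p}\lesssim\|f_2\|_{L^{p_2}}\|\Lambda^lf_1\|_{L^{q_2}}$. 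In the resonant term the two inputs share a frequency $\sim2^k$ while the output frequency is $\lesssim2^k$, so $2^{lk}$ may be assigned to either factor; assigning it to $f_1$ and absorbing the (smaller) output frequency by Bernstein yields $\|\Lambda^lR(f_1,f_2)\|_{L^p}\lesssim\|f_2\|_{L^{p_2}}\|\Lambda^lf_1\|_{L^{q_2}}$. Summing the three pieces gives the second inequality.

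For the commutator estimate I would view $[\Lambda^l,f_1]f_2=\Lambda^l(f_1f_2)-f_1\Lambda^lf_2$ as a bilinear Fourier multiplier with symbol $\sigma(\xi,\eta)=|\xi+\eta|^l-|\eta|^l$, where $\xi$ is dual to $f_1$ and $\eta$ to $f_2$, and split the frequency plane into the low--high region $|\xi|\ll|\eta|$, the high--low region $|\eta|\ll|\xi|$, and the resonant region $|\xi|\sim|\eta|$. On the low--high region a Taylor expansion gives $\sigma(\xi,\eta)=O(|\xi|\,|\eta|^{l-1})$ with all derivatives of the expected size, so $\sigma(\xi,\eta)/(|\xi|\,|\eta|^{l-1})$ is a Coifman--Meyer symbol and this piece is controlled by $\|\nabla f_1\|_{L^{p_1}}\|\Lambda^{l-1}f_2\|_{L^{q_1}}$, which is exactly the first term. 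On the high--low region $|\xi+\eta|\sim|\xi|$ and $|\eta|^l\lesssim|\xi|^l$, so $\sigma(\xi,\eta)/|\xi|^l$ is a bounded Coifman--Meyer symbol and the piece is $\lesssim\|\Lambda^lf_1\|_{L^{p_2}}\|f_2\|_{L^{q_2}}$. On the resonant region $|\sigma|\lesssim2^{kl}$ with both inputs at frequency $\sim2^k$, and I would distribute $2^{kl}$ onto $f_1$ exactly as in the resonant estimate above, obtaining $\lesssim\|\Lambda^lf_1\|_{L^{p_2}}\|f_2\|_{L^{q_2}}$. Adding the three regions yields Lemma~\ref{jiaohuanzi}.

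The main obstacle I anticipate is twofold. First, $|\zeta|^l$ is not smooth at $\zeta=0$ when $l$ is not an even integer, so one must check carefully that in each dyadic region the relevant frequency variables stay bounded away from $0$ and verify the Coifman--Meyer derivative bounds uniformly across scales; the only place an output frequency can approach $0$ is the resonant term, and there one switches from the multiplier theorem to a Bernstein argument. Second, the range $p,p_1,p_2,q_1,q_2\in\{1,\infty\}$ lies outside the reach of the $L^p$ square-function equivalence, so the endpoint cases require the refined arguments of \cite{kato-1988-CommutatorEstimatesEuler,kenig} (and subsequent works); for $1<p<\infty$ the scheme described above is complete as written.
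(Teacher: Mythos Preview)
The paper does not actually prove Lemma~\ref{jiaohuanzi}: it simply states the two estimates and refers to \cite{kato-1988-CommutatorEstimatesEuler,kenig} for the proofs. So there is nothing in the paper's argument to compare against.

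Your sketch is the standard modern route to these inequalities and is essentially correct. The paraproduct decomposition for the Leibniz rule, and the symbol analysis $\sigma(\xi,\eta)=|\xi+\eta|^l-|\eta|^l$ split into low--high, high--low, and resonant regions for the commutator, are exactly how these results are established in the literature following Kato--Ponce, Kenig--Ponce--Vega, and later refinements (Grafakos--Oh, Muscalu--Schlag, etc.). Your caveats are also the right ones: the non-smoothness of $|\zeta|^l$ at the origin is handled precisely because each dyadic piece localizes away from zero (with the resonant output treated by Bernstein rather than Coifman--Meyer), and the endpoint exponents $p,p_i,q_i\in\{1,\infty\}$ genuinely require additional care beyond the square-function machinery. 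For the purposes of this paper, which only quotes the lemma, your level of detail already exceeds what is needed; one small point is that your high--low and resonant bounds come out as $\|\Lambda^l f_1\|_{L^{q_2}}\|f_2\|_{L^{p_2}}$, matching the second term of the stated estimate up to the evident typo ($f$ for $f_1$) in the lemma as written.
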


\section{Kernel estimates} \label{sec:kernel}

In this section, we establish the  integral estimates for the solution to \eqref{equation} and get upper bounds for the corresponding kernel functions by spectral analysis.  Throughout this section, we leverage  \(\langle \cdot, \cdot \rangle\) to denote the standard inner product on \(\mathbb{C}^n\)\,(\(n \geq 2\)).

\begin{prop}\label{pro1}
Let $(\bv,\bb)$ be a solution to system \eqref{equation}. Then $(\bv,\bb)$ is capable of being bounded as
\begin{align}
\left\{\begin{aligned}\label{2.4}
&|\widehat{\bv}|\leq\left(|\widehat{G}_2|+
|\widehat{G}_3|\right)|\widehat{\boldsymbol{\psi}}_0|
 +\int_0^t\left(|\widehat{G}_2(t-\tau)||\widehat{\mathbf{N}}(\tau)|
+|\widehat{G}_3(t-\tau)||\widehat{\mathbf{N}}_1(\tau)|\right)d\tau,\\[1ex]
&|\widehat{\bb}|\leq\left(|\widehat{G}_1|+
|\widehat{G}_3|\right)|\hat{\boldsymbol{\psi}}_0| +\int_0^t\left(|\widehat{G}_1(t-\tau)||\widehat{\mathbf{N}}(\tau)|
+|\widehat{G}_3(t-\tau)||\widehat{\mathbf{N}}_2(\tau)|\right)d\tau,
\end{aligned}\right.
\end{align}
where $\boldsymbol{\psi}_0:=(\bv_0, \bb_0)^T$, $\mathbf{N}=({\mathbf{N}_1}, {\mathbf{N}_2})^T$, ${\mathbf{N}_1}:=\mathbb{P}(\bv \cdot \nabla \bv-\bb \cdot \nabla \bb)$, ${\mathbf{N}_2}:=\bv \cdot \nabla \bb- \bb \cdot \nabla \bv
$ and $\mathbb{P}$ denotes the Helmholtz-Leray projection operator. The kernel functions $\widehat{G}_1,\, \widehat{G}_2$ and $\widehat{G}_3$ are expressed by
\begin{align}\label{2.5'}
&|\widehat{G}_1|
=\frac{\left| e^{-\lambda_{2}t} - e^{-\lambda_{1}t} \right|}{|\lambda_{1} - \lambda_{2}|} \sqrt{ |\lambda_{1}|^2 + |\widetilde{\bb} \cdot \bj|^2 }=:|\widehat{G}|\sqrt{ |\lambda_{1}|^2 + |\widetilde{\bb} \cdot \bj|^2 },\notag\\[1ex]
&|\widehat{G}_2|
=\frac{\left| e^{-\lambda_{2}t} - e^{-\lambda_{1}t} \right|}{|\lambda_{1} - \lambda_{2}|} \sqrt{ |\lambda_{1}|^2 + |\widetilde{\bb} \cdot \bj|^2 }\left| \frac{\widetilde{\bb}\cdot \bj}{\lambda_{1}} \right|=:|\widehat{G}|\sqrt{ |\lambda_{1}|^2 + |\widetilde{\bb} \cdot \bj|^2 }\left| \frac{\widetilde{\bb}\cdot \bj}{\lambda_{1}} \right|, \\[1ex]
&\widehat{G}_3 =e^{-\lambda_{1}t},\notag
\end{align}
where $\widehat{G}$ represents
\begin{align}\label{2.5''}
\widehat{G} := \frac{e^{-\lambda_{2}t} - e^{-\lambda_{1}t}}{\lambda_{1} - \lambda_{2}}
\end{align}
with $\lambda_1$ and $\lambda_2$ denoting the roots of the  characteristic equation
\begin{align}\label{2.7}
\lambda^2 - \lambda + |\widetilde{\bb}\cdot \bj|^2=0,
\end{align}
i.e.,
\begin{align*}
\lambda_{1}(\bj) = \frac{1+ \sqrt{1 - 4|\widetilde{\bb}\cdot \bj|^2}}{2},\,\,\,\,\,\lambda_{2}(\bj) = \frac{1 - \sqrt{1 - 4|\widetilde{\bb}\cdot \bj|^2}}{2}.
\end{align*}
When $\lambda_1 = \lambda_2$, \eqref{2.4} still holds if we replace $\widehat{G}$ in \eqref{2.5''} by
\begin{equation*}
\widehat{G}=\lim_{\lambda_2 \to \lambda_1} \frac{e^{-\lambda_{2}t} - e^{-\lambda_{1}t}}{\lambda_{1} - \lambda_{2}}=te^{-\lambda_{1}t}.
\end{equation*}
\end{prop}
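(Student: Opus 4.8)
The plan is to take the Fourier transform of the perturbed system \eqref{equation} on $\mathbb{T}^n$, reduce the PDE to a decoupled (in $\bj$) linear ODE system with a forcing term coming from the nonlinearity, solve that $2\times2$ system explicitly via its characteristic equation, and finally read off the kernels $\widehat G_1,\widehat G_2,\widehat G_3$.

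\emph{Step 1: Fourier reduction.} Apply the Helmholtz--Leray projection $\mathbb P$ to the $\bv$-equation to eliminate $\nabla p$, and write $\mathbf N_1=\mathbb P(\bv\cdot\nabla\bv-\bb\cdot\nabla\bb)$, $\mathbf N_2=\bv\cdot\nabla\bb-\bb\cdot\nabla\bv$. Taking the Fourier transform, the operator $\widetilde\bb\cdot\nabla$ becomes multiplication by $i(\widetilde\bb\cdot\bj)$, so for each fixed $\bj\in\mathbb Z^n\setminus\{\0\}$ we obtain
\begin{align*}
\partial_t\widehat\bv &= -\,\widehat\bv + i(\widetilde\bb\cdot\bj)\,\widehat\bb + \widehat{\mathbf N}_1,\\
\partial_t\widehat\bb &= i(\widetilde\bb\cdot\bj)\,\widehat\bv + \widehat{\mathbf N}_2 .
\end{align*}
(The divergence-free conditions are preserved and the $\bj=\0$ mode vanishes by Lemma \ref{lem:mean}, so no boundary terms appear and one may work componentwise.)

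\emph{Step 2: Solve the linear ODE.} The homogeneous system has matrix $A(\bj)=\begin{pmatrix}-1 & i(\widetilde\bb\cdot\bj)\\ i(\widetilde\bb\cdot\bj) & 0\end{pmatrix}$, whose eigenvalues $-\lambda_1,-\lambda_2$ satisfy $\lambda^2-\lambda+|\widetilde\bb\cdot\bj|^2=0$, giving the formulas for $\lambda_{1,2}(\bj)$ stated in the proposition. Using the standard $2\times2$ functional-calculus identity $e^{tA}=\dfrac{e^{-\lambda_2 t}-e^{-\lambda_1 t}}{\lambda_1-\lambda_2}\,(A+\lambda_1 I)\; +\; e^{-\lambda_1 t} I$ (valid when $\lambda_1\neq\lambda_2$, with the limiting form $te^{-\lambda_1 t}$ replacing $\widehat G$ when $\lambda_1=\lambda_2$), I compute each entry of $e^{tA}$ in terms of $\widehat G=\frac{e^{-\lambda_2 t}-e^{-\lambda_1 t}}{\lambda_1-\lambda_2}$ and $e^{-\lambda_1 t}$. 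Here I use $\lambda_1+\lambda_2=1$ and $\lambda_1\lambda_2=|\widetilde\bb\cdot\bj|^2$, so $A+\lambda_1 I=\begin{pmatrix}\lambda_1-1 & i(\widetilde\bb\cdot\bj)\\ i(\widetilde\bb\cdot\bj) & \lambda_1\end{pmatrix}=\begin{pmatrix}-\lambda_2 & i(\widetilde\bb\cdot\bj)\\ i(\widetilde\bb\cdot\bj) & \lambda_1\end{pmatrix}$. The $(\bv\leftarrow\bb)$ and $(\bb\leftarrow\bv)$ off-diagonal entries produce a factor $i(\widetilde\bb\cdot\bj)\,\widehat G$; to match the stated form of $\widehat G_1,\widehat G_2$ one rewrites $i(\widetilde\bb\cdot\bj)\widehat G$ and $-\lambda_2\widehat G = (\lambda_1-1)\widehat G$ by pulling out $\sqrt{|\lambda_1|^2+|\widetilde\bb\cdot\bj|^2}$ (note $|\lambda_2|^2 = |\lambda_1\lambda_2|^2/|\lambda_1|^2 = |\widetilde\bb\cdot\bj|^4/|\lambda_1|^2$, and the diagonal $\bb\leftarrow\bb$ entry $\lambda_1\widehat G + e^{-\lambda_1 t}$ is handled by absorbing the $e^{-\lambda_1 t}$ piece into the $\widehat G_3=e^{-\lambda_1 t}$ term acting on $\widehat\bb_0$). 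The Duhamel formula $\widehat{(\bv,\bb)}(t)=e^{tA}\widehat{\boldsymbol\psi}_0+\int_0^t e^{(t-\tau)A}\widehat{\mathbf N}(\tau)\,d\tau$ then yields \eqref{2.4} after taking absolute values and using the triangle inequality on each matrix entry, which also explains why both $|\widehat G_1|$ and $|\widehat G_3|$ (resp. $|\widehat G_2|$ and $|\widehat G_3|$) appear as bounds for the $\widehat\bb$ (resp. $\widehat\bv$) contribution.

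\emph{Main obstacle.} The only real subtlety is the degenerate case $\lambda_1=\lambda_2$ (i.e. $|\widetilde\bb\cdot\bj|^2=\tfrac14$) and, more importantly, verifying that the expressions $\widehat G_2 = \widehat G\sqrt{|\lambda_1|^2+|\widetilde\bb\cdot\bj|^2}\,\bigl|\tfrac{\widetilde\bb\cdot\bj}{\lambda_1}\bigr|$ and $\widehat G_1=\widehat G\sqrt{|\lambda_1|^2+|\widetilde\bb\cdot\bj|^2}$ are exactly the moduli of the relevant entries of $e^{tA}$ (including the case where $\lambda_1,\lambda_2$ are complex conjugates, which occurs when $|\widetilde\bb\cdot\bj|^2>\tfrac14$, so that $|e^{-\lambda_1 t}|=|e^{-\lambda_2 t}|=e^{-t/2}$ and $\lambda_1-\lambda_2$ is purely imaginary). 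Care is needed to confirm the algebraic identities $|\lambda_1-1|=|\lambda_2|$ and the bookkeeping that reorganizes the diagonal $\bb\leftarrow\bb$ entry, but these are elementary consequences of $\lambda_1+\lambda_2=1$; the limiting case follows by continuity since $\widehat G\to te^{-\lambda_1 t}$ as $\lambda_2\to\lambda_1$. No estimates on the kernels themselves are needed at this stage — those are deferred to the subsequent frequency-splitting analysis.
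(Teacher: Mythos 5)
Your proof is correct and reaches the same kernel bounds, but you arrive at the matrix exponential by a different algebraic device: the paper diagonalizes $\mathbf{Q}$ via its eigenvectors $\mathbf{a}_{1,2}$ and dual vectors $\mathbf{b}_{1,2}$, notices the apparent singularity of $\mathbf{b}_{1,2}$ near $4|\widetilde{\bb}\cdot\bj|^2=1$, and then \emph{rewrites} the spectral sum as $(e^{-\lambda_2 t}-e^{-\lambda_1 t})\mathbf{a}_2\overline{\mathbf{b}}_2^{\,T}+e^{-\lambda_1 t}I$ to kill it; you go straight to the Lagrange--Sylvester interpolation identity $e^{tA}=\widehat G\,(A+\lambda_1 I)+e^{-\lambda_1 t}I$, which is the same matrix (indeed $(\lambda_1-\lambda_2)\mathbf{a}_2\overline{\mathbf{b}}_2^{\,T}=A+\lambda_1 I$ by the Cayley--Hamilton / projector identity) but is singularity-free from the outset and degenerates continuously to $\widehat G=te^{-\lambda_1 t}$ when $\lambda_1=\lambda_2$, so you never need the eigenvector formulas at all. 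The bookkeeping to match $|\widehat G_1|, |\widehat G_2|$ is also a little slicker than your sketch suggests: bounding the rows of $\widehat G(A+\lambda_1I)$ by Cauchy--Schwarz in $\widehat{\boldsymbol\psi}_0$ gives exactly the row norms $|\widehat G|\sqrt{|\lambda_2|^2+|\widetilde\bb\cdot\bj|^2}$ and $|\widehat G|\sqrt{|\lambda_1|^2+|\widetilde\bb\cdot\bj|^2}$, and the first reduces to $|\widehat G_2|$ via $|\lambda_2|=|\widetilde\bb\cdot\bj|^2/|\lambda_1|$ (so one should use Cauchy--Schwarz on the row rather than ``triangle inequality on each matrix entry,'' which loses a harmless $\sqrt{2}$). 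The sign on your forcing term $\widehat{\mathbf N}_1$ in Step 1 is opposite the paper's but is immaterial after taking absolute values. Net effect: your route is a mild simplification of the paper's, trading explicit eigenvector projectors (useful for intuition about which modes decay) for an identity that handles the repeated-root case for free.
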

\begin{proof}[ Proof of Proposition {\rm \ref{pro1}} ]
Take the Helmholtz-Leray projection operator
$\mathbb{P}:=\text{Id} - \nabla \Delta^{-1} \nabla \cdot$
to the velocity equation in $\eqref{equation}_1$ to get
\begin{equation}\label{1.41}
\left\{ \begin{array}{l}
{\partial _t}\bv+ \bv -\widetilde{\bb} \cdot \nabla \bb+ \mathbf{N}_1(\bv, \bb)=\0,\\[1ex]
{\partial _t}\bb-\widetilde{\bb} \cdot \nabla \bv+\mathbf{N}_2(\bv, \bb) =\0,\\[1ex]
\mathrm{div}\,\bv = \mathrm{div}\,\bb = 0,\\[1ex]
(\bv,\bb)|_{t=0} = (\bv_0, \bb_0),
\end{array} \right.
\end{equation}
where
\begin{equation}\label{1.411}
\mathbf{N}_1(\bv, \bb)=\mathbb{P}(\bv \cdot \nabla \bv-\bb \cdot \nabla \bb),\,\,\,\,\,\,\,
\mathbf{N}_2(\bv, \bb)=\bv \cdot \nabla \bb-  \bb \cdot \nabla \bv.
\end{equation}
Executing  the Fourier transform to \eqref{1.41}, we have  for \(\bj \in \mathbb{Z}^n \setminus \{\0\}\) that
\begin{equation}\label{97}
\left\{ \begin{array}{l}
\partial_t \widehat{\bv} +  \widehat{\bv} - i (\widetilde{\bb} \cdot \bj) \widehat{\bb}  + \left( I - \frac{\bj \otimes \bj}{|\bj|^2} \right) \left( \widehat{\bv \cdot \nabla \bv} - \widehat{\bb \cdot \nabla \bb} \right)=\0,\\[2ex]
\partial_t \widehat{\bb} -i(\widetilde{\bb}\cdot \bj)\widehat{\bv}+ \widehat{\bv\cdot\nabla \bb} - \widehat{\bb\cdot\nabla \bv} =\0.
\end{array} \right.
\end{equation}
Rewrite the system in vector form \(\widehat{\boldsymbol{\psi}} := (\widehat{\bv}, \widehat{\bb})^T\) to obtain
\begin{align}\label{b}
\partial_t \hat{\boldsymbol{\psi}} + \mathbf{Q} \hat{\boldsymbol{\psi}} + \widehat{\mathbf{N}}(\bv, \bb) = \0,
\end{align}
where
\begin{align}\label{1.412}
\mathbf{Q}: =
\begin{pmatrix}
1 & -i\widetilde{\bb}\cdot \bj \\
-i\widetilde{\bb}\cdot \bj & 0
\end{pmatrix}, \quad
\widehat{\mathbf{N}}(\bv, \bb) :=
\begin{pmatrix}
\widehat{\mathbf{N}_1} \\
\widehat{\mathbf{N}_2}
\end{pmatrix}.
\end{align}
By virtue of  Duhamel's principle, the solution to \eqref{b} is written as
\begin{align}\label{cd}
\widehat{\boldsymbol{\psi}}&=e^{-\mathbf{Q}t}\widehat{\boldsymbol{\psi}}_0
-\int_0^te^{-\mathbf{Q}(t-\tau)}\widehat{\mathbf{N}}(\bv, \bb)(\tau)\, d\tau.
\end{align}
As the characteristic polynomial of the matrix \( \mathbf{Q} \) can be expressed as
\[
\det(\mathbf{Q} - \lambda \mathbf{I}) = \lambda^2 - \lambda + |\widetilde{\bb}\cdot \bj|^2,
\]
it has two eigenvalues \( \lambda_{1,2}(\bj) \) with corresponding eigenvectors \( \mathbf{a}_{1,2}(\bj) \), defined by
\[
\lambda_{1,2}(\bj) = \frac{1 \pm \sqrt{1 - 4|\widetilde{\bb}\cdot \bj|^2}}{2}, \quad
\mathbf{a}_{1,2}(\bj) = \begin{pmatrix} \lambda_{1,2} \\ -i\widetilde{\bb}\cdot \bj \end{pmatrix}.
\]

Denote
\[
\mathbf{A}_1 := (\mathbf{a}_1 \ \mathbf{a}_2), \quad
\mathbf{A}_2 := \mathbf{A}_1^{-1} = \frac{1}{\lambda_1 - \lambda_2} \begin{pmatrix}
1 & \frac{\lambda_2}{i\widetilde{\bb}\cdot \bj} \\
-1 & -\frac{\lambda_1}{i\widetilde{\bb}\cdot \bj}
\end{pmatrix} = \begin{pmatrix} \overline{\mathbf{b}}_1^{T} \\ \overline{\mathbf{b}}_2^{T} \end{pmatrix}.
\]
 Thus, we can diagonalize the matrix \( \mathbf{Q} \) as
\[
 \mathbf{Q}= \mathbf{A}_1 \begin{pmatrix} \lambda_1 & 0 \\ 0 & \lambda_2 \end{pmatrix} \mathbf{A}_2 = \lambda_1 \mathbf{a}_1 \overline{\mathbf{b}}_1^{T} + \lambda_2 \mathbf{a}_2 \overline{\mathbf{b}}_2^{T},
\]
and the matrix exponential \( e^{-\mathbf{Q}t} \) can be expressed as
\[
e^{-\mathbf{Q} t} = e^{-\lambda_1 t} \mathbf{a}_1 \overline{\mathbf{b}}_1^{T} + e^{-\lambda_2 t} \mathbf{a}_2 \overline{\mathbf{b}}_2^{T}.
\]
Then, \eqref{cd} becomes
\begin{align}\label{c}
\widehat{\boldsymbol{\psi}}&=e^{-\mathbf{Q}t}\widehat{\boldsymbol{\psi}}_0
-\int_0^te^{-\mathbf{Q}(t-\tau)}\widehat{\mathbf{N}}\, d\tau\notag\\[1ex]
&=e^{-\lambda_1 t} \mathbf{a}_1 \overline{\mathbf{b}}_1^{T}\widehat{\boldsymbol{\psi}}_0 + e^{-\lambda_2 t} \mathbf{a}_2 \overline{\mathbf{b}}_2^{T}\widehat{\boldsymbol{\psi}}_0
-\int_0^t\left( e^{-\lambda_1 (t-\tau)} \mathbf{a}_1 \overline{\mathbf{b}}_1^{T} + e^{-\lambda_2 (t-\tau)} \mathbf{a}_2 \overline{\mathbf{b}}_2^{T}\right)\widehat{\mathbf{N}}(\tau)\, d\tau\notag\\[1ex]
&=e^{-\lambda_1 t} \langle \widehat{\boldsymbol{\psi}}_0, \mathbf{b}_1 \rangle \mathbf{a}_1 + e^{-\lambda_2 t} \langle \widehat{\boldsymbol{\psi}}_0, \mathbf{b}_2 \rangle \mathbf{a}_2\notag\\[1ex]
&\quad-\int_0^t e^{-\lambda_1 (t-\tau)}\langle \widehat{\mathbf{N}}(\tau),\mathbf{b}_{1}\rangle\mathbf{a}_{1}\, d\tau-\int_0^t e^{-\lambda_2 (t-\tau)}\langle \widehat{\mathbf{N}}(\tau),\mathbf{b}_{2}\rangle\mathbf{a}_{2}\, d\tau,
\end{align}
where we used the fact \( \mathbf{l} \overline{\mathbf{m}}^{T} \mathbf{n} = \mathbf{l}\langle \mathbf{n}, \mathbf{m} \rangle =\langle \mathbf{n}, \mathbf{m} \rangle \mathbf{l} \)  for any vectors \( \mathbf{l} \), \( \mathbf{m} \) and \( \mathbf{n} \).
However,  we find from \eqref{c} that, in the neighborhood of the set \(\{1=4|\widetilde{\bb}\cdot \bj|^{2}\}\), the vectors  $\mathbf{b}_{1}$ and $\mathbf{b}_{2}$ have singularity. To eliminate this singularity, we rewrite $\widehat{\boldsymbol{\psi}}$ as
\begin{align*}
\widehat{\boldsymbol{\psi}}=&(e^{-\lambda_{2}t}-e^{-\lambda_{1}t})
\langle\hat{\boldsymbol{\psi}}_0,\mathbf{b}_{2}\rangle\mathbf{a}_{2}
+e^{-\lambda_{1}t}\hat{\boldsymbol{\psi}}_0\\[1ex]
&-\int_0^t(e^{-\lambda_{2}(t-\tau)}-e^{-\lambda_{1}(t-\tau)})
\langle \widehat{\mathbf{N}}(\tau),\mathbf{b}_{2}\rangle\mathbf{a}_{2}\, d\tau
-\int_0^te^{-\lambda_{1}(t-\tau)}\widehat{\mathbf{N}}(\tau)\, d\tau.
\end{align*}
Then,   we have for \( \widehat{\bb} \) that
\begin{align}\label{3.1}
\widehat{\bb}= &\langle \widehat{\boldsymbol{\psi}}, \mathbf{e}_2 \rangle\notag\\[1ex] =&(e^{-\lambda_{2}t}-e^{-\lambda_{1}t})
\langle\hat{\boldsymbol{\psi}}_0,\mathbf{b}_{2}\rangle\langle\mathbf{a}_{2}, \mathbf{e}_2\rangle
+e^{-\lambda_{1}t}\langle\hat{\boldsymbol{\psi}}_0,\mathbf{e}_2\rangle\notag\\[1ex]
&-\int_0^t(e^{-\lambda_{2}(t-\tau)}-e^{-\lambda_{1}(t-\tau)})
\langle \widehat{\mathbf{N}}(\tau),\mathbf{b}_{2}\rangle\langle\mathbf{a}_{2}, \mathbf{e}_2\rangle
 \, d\tau-\int_0^te^{-\lambda_{1}(t-\tau)}\langle \widehat{\mathbf{N}}(\tau), \mathbf{e}_2\rangle \, d\tau.
\end{align}
Similarly, for \( \widehat{\bv} \), we find
\[
\langle \mathbf{a}_2, \mathbf{e}_1 \rangle = \frac{\lambda_2}{-i\widetilde{\bb}\cdot \bj} \langle \mathbf{a}_2, \mathbf{e}_2 \rangle = \frac{i\widetilde{\bb}\cdot \bj}{\lambda_1} \langle \mathbf{a}_2, \mathbf{e}_2 \rangle,
\]
then
\begin{align}\label{3.2}
\widehat{\bv}= &\langle \widehat{\boldsymbol{\psi}}, \mathbf{e}_1 \rangle\notag\\[1ex] =& \frac{i\widetilde{\bb}\cdot \bj}{\lambda_1} (e^{-\lambda_2 t} - e^{-\lambda_1 t}) \langle \widehat{\boldsymbol{\psi}}_0, \mathbf{b}_2 \rangle \langle \mathbf{a}_2, \mathbf{e}_2 \rangle + e^{-\lambda_1 t} \langle \widehat{\boldsymbol{\psi}}_0, \mathbf{e}_1 \rangle\notag\\[1ex]
&-\int_0^t\frac{i\widetilde{\bb}\cdot \bj}{\lambda_1}(e^{-\lambda_{2}(t-\tau)}-e^{-\lambda_{1}(t-\tau)})
\langle\widehat{\mathbf{N}}(\tau),\mathbf{b}_{2}\rangle\langle\mathbf{a}_{2}, \mathbf{e}_2\rangle \, d\tau-\int_0^te^{-\lambda_{1}(t-\tau)}\langle \widehat{\mathbf{N}}(\tau), \mathbf{e}_1\rangle\, d\tau.
\end{align}
Combining \eqref{3.1} and \eqref{3.2}, we deduce
\begin{align*}
|\widehat{\bb}|\leq& \left\lvert\left( e^{-\lambda_{2}t}-e^{-\lambda_{1}t}\right) \langle\widehat{\boldsymbol{\psi}}_0,\mathbf{b}_{-}\rangle\langle\mathbf{a}_{2}, \mathbf{e}_2\rangle\right\rvert+ \left\lvert e^{-\lambda_{1}t}\widehat{\bb}_0 \right\rvert\notag\\[1ex]
&+\int_0^t\left\lvert\left(e^{-\lambda_{2}(t-\tau)}-e^{-\lambda_{1}(t-\tau)}\right)
\langle \widehat{\mathbf{N}}(\tau),\mathbf{b}_{-}\rangle\langle\mathbf{a}_{2}, \mathbf{e}_2\rangle\right\lvert  \,d\tau+\int_0^t\left\lvert e^{-\lambda_{1}(t-\tau)}\widehat{\mathbf{N}}_2(\tau) \right\lvert \,d\tau,
\end{align*}
and
\begin{align*}
|\widehat{\bv}|\leq& \left\lvert\frac{\widetilde{\bb}\cdot \bj}{\lambda_{1}}\left( e^{-\lambda_{2}t}-e^{-\lambda_{1}t}\right) \langle\hat{\boldsymbol{\psi}}_0,\mathbf{b}_{2}\rangle\langle\mathbf{a}_{2}, \mathbf{e}_2\rangle\right\rvert+ \left\lvert e^{-\lambda_{1}t}\widehat{\bv}_0 \right\rvert\notag\\[1ex]
&+\int_0^t\left\lvert\frac{\widetilde{\bb}\cdot \bj}{\lambda_{1}}\left(e^{-\lambda_{2}(t-\tau)}-e^{-\lambda_{1}(t-\tau)}\right)
\langle \widehat{\mathbf{N}}(\tau),\mathbf{b}_{2}\rangle\langle\mathbf{a}_{2}, \mathbf{e}_2\rangle\right\lvert \, d\tau+\int_0^t\left\lvert e^{-\lambda_{1}(t-\tau)}\widehat{\mathbf{N}}_1(\tau)\right\lvert\, d\tau.
\end{align*}
By the definitions of $\mathbf{a}_{2}$ and $\mathbf{b}_{2}$,  for any $\mathbf{f}\in \mathbb{C}^2$, we infer
\begin{align*}
\left| (e^{-\lambda_{2}t} - e^{-\lambda_{1}t}) \langle \mathbf{f}, \mathbf{b}_{2} \rangle \langle \mathbf{a}_{2}, \mathbf{e}_2 \rangle \right|\leq &\left| e^{-\lambda_{2}t} - e^{-\lambda_{1}t} \right| |\mathbf{b}_{2}| |\langle \mathbf{a}_{2}, \mathbf{e}_2 \rangle| |\mathbf{f}|\notag\\[1ex]
=&{\left| e^{-\lambda_{2}t} - e^{-\lambda_{1}t} \right|}\frac{\sqrt{ |\lambda_{1}|^2 + |\widetilde{\bb} \cdot \bj|^2 }}{|\lambda_{1} - \lambda_{2}|} |\mathbf{f}|\notag\\[1ex]
=&:|\widehat{G}_1(\bj,t)||\mathbf{f}|,
\end{align*}
and
\begin{align*}
\left| \frac{\widetilde{\bb}\cdot \bj}{\lambda_{1}}(e^{-\lambda_{2}t} - \mathbf{e}^{-\lambda_{1}t}) \langle \mathbf{f}, \mathbf{b}_{2} \rangle \langle \mathbf{a}_{2}, \mathbf{e}_2 \rangle \right|
\leq& \left| e^{-\lambda_{2}t} - e^{-\lambda_{1}t} \right| |\mathbf{b}_{2}| |\langle \mathbf{a}_{2}, \mathbf{e}_2 \rangle|\left| \frac{\widetilde{\bb}\cdot \bj}{\lambda_{2}} \right| |\mathbf{f}|\notag\\[1ex]
=&\frac{\left| e^{-\lambda_{2}t} - e^{-\lambda_{1}t} \right|}{|\lambda_{1} - \lambda_{2}|} {\sqrt{ |\lambda_{1}|^2 + |\widetilde{\bb} \cdot \bj|^2 }}\left| \frac{\widetilde{\bb}\cdot \bj}{\lambda_{1}} \right||\mathbf{f}|\notag\\[1ex]
=&:|\widehat{G}_2(\bj,t)||\mathbf{f}|.
\end{align*}
Define
$$
\widehat{G}_3(\bj,t): = e^{-\lambda_{1}t},\quad  \widehat{G}(\bj,t) := \frac{e^{-\lambda_{2}t} - e^{-\lambda_{1}t}}{\lambda_{1} - \lambda_{2}}.
$$
Then
$$
|\widehat{G}_1(\bj,t)| = |\widehat{G}(\bj,t)|{\sqrt{ |\lambda_{1}|^2 + |\widetilde{\bb} \cdot \bj|^2 }},\quad |\widehat{G}_2(\bj,t)| = |\widehat{G}(\bj,t)|{\sqrt{ |\lambda_{1}|^2 + |\widetilde{\bb} \cdot \bj|^2 }}\left| \frac{\widetilde{\bb}\cdot \bj}{\lambda_{1}} \right|.
$$
Therefore,
\begin{align*}
|\widehat{\bb}|\leq|\widehat{G}_1||\hat{\boldsymbol{\psi}}_0|+
|\widehat{G}_3||\widehat{\bb}_0| +\int_0^t|\widehat{G}_1(t-\tau)||\widehat{\mathbf{N}}(\tau)|\, d\tau
+\int_0^t|\widehat{G}_3(t-\tau)||\widehat{\mathbf{N}_2}(\tau)|\, d\tau,
\end{align*}
and
\begin{align*}
|\widehat{\bv}|\leq|\widehat{G}_2||\hat{\boldsymbol{\psi}}_0|+
|\widehat{G}_3||\widehat{\bv}_0| +\int_0^t|\widehat{G}_2(t-\tau)||\widehat{\mathbf{N}}(\tau)|\, d\tau
+\int_0^t|\widehat{G}_3(t-\tau)||\widehat{\mathbf{N}_1}(\tau)|\,d\tau.
\end{align*}
We finish  the proof of Proposition \ref{pro1}.
\end{proof}

In what follows, we investigate the behavior of the kernel functions $\widehat{K_1}$--$\widehat{K_3}$. To this end,  we decompose the frequency space $\mathbb{Z}^n \setminus \{\0\}$ into three subsections.
\begin{prop}\label{pro2}
Splitting  the frequency space $\mathbb{Z}^n\setminus\{\0\}$ into the following three subregions:
\begin{align*}
&S_1 := \left\{ \bj \in \mathbb{Z}^n \setminus \{\0\} : 1 - 4|\widetilde{\bb}\cdot \bj|^2 \leq 0 \right\},\notag\\[1ex]
&S_2 := \left\{\bj \in \mathbb{Z}^n \setminus \{\0\} : 0 < 1 - 4|\widetilde{\bb}\cdot \bj|^2 \leq \tfrac{1}{4} \right\},\notag\\[1ex]
&S_3 := \left\{ \bj \in \mathbb{Z}^n \setminus \{\0\} : 1- 4|\widetilde{\bb}\cdot \bj|^2 > \tfrac{1}{4} \right\},
\end{align*}
then, there exists an absolute constant $C > 0$ such that
\[
\begin{cases}
|\widehat{G}_1(\bj,t)|, \quad |\widehat{G}_2(\bj,t)| \leq C|\bj|e^{-\frac{t}{4}}, & \bj \in S_1, \\
|\widehat{G}_1(\bj,t)|, \quad |\widehat{G}_2(\bj,t)| \leq Ce^{-\frac{t}{8}}, &  \bj \in S_2, \\
|\widehat{G}_1(\bj,t)| \leq Ce^{-|\widetilde{\bb} \cdot \bj|^2 t}, \quad |\widehat{G}_2(\bj,t)| \leq C|\widetilde{\bb} \cdot \bj|e^{-|\widetilde{\bb} \cdot \bj|^2 t}, &  \bj \in S_3,
\\
 |\widehat{G}_3|\leq e^{-\frac{t}{2}}, &  \bj \in \mathbb{Z}^n \setminus \{\0\}.
\end{cases}
\]
\end{prop}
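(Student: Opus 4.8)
The strategy is to analyze the three kernel quantities $\widehat{G}$, $\widehat{G}_1$, $\widehat{G}_2$ on each of the three frequency regions $S_1, S_2, S_3$ separately, using the explicit formulas \eqref{2.5'}--\eqref{2.5''} together with the exact expressions for $\lambda_1(\bj), \lambda_2(\bj)$ as roots of \eqref{2.7}. The bound $|\widehat{G}_3| = |e^{-\lambda_1 t}| \le e^{-t/2}$ for all $\bj \ne \0$ is immediate, since in every region $\mathrm{Re}\,\lambda_1 \ge \tfrac{1}{2}$ (when $1 - 4|\widetilde{\bb}\cdot\bj|^2 \le 0$ we have $\lambda_1 = \tfrac{1}{2} + \tfrac{\ii}{2}\sqrt{4|\widetilde{\bb}\cdot\bj|^2 - 1}$, so $\mathrm{Re}\,\lambda_1 = \tfrac12$; when the discriminant is positive, $\lambda_1 = \tfrac12(1 + \sqrt{1-4|\widetilde{\bb}\cdot\bj|^2}) \ge \tfrac12$). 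So the work reduces to bounding $\widehat{G}_1$ and $\widehat{G}_2$.

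\textbf{Region $S_1$ (oscillatory regime).} Here $\lambda_{1,2} = \tfrac12 \pm \tfrac{\ii}{2}\sqrt{4|\widetilde{\bb}\cdot\bj|^2 - 1}$, so $\lambda_1 - \lambda_2 = \ii\sqrt{4|\widetilde{\bb}\cdot\bj|^2 - 1}$ is purely imaginary and $|\lambda_1| = |\lambda_2| = |\widetilde{\bb}\cdot\bj|$ (indeed $|\lambda_1|^2 = \tfrac14 + \tfrac14(4|\widetilde{\bb}\cdot\bj|^2 - 1) = |\widetilde{\bb}\cdot\bj|^2$). Then $|e^{-\lambda_2 t} - e^{-\lambda_1 t}| \le 2 e^{-t/2}$, and $\sqrt{|\lambda_1|^2 + |\widetilde{\bb}\cdot\bj|^2} = \sqrt{2}\,|\widetilde{\bb}\cdot\bj|$. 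The only subtlety is the denominator $|\lambda_1 - \lambda_2| = \sqrt{4|\widetilde{\bb}\cdot\bj|^2 - 1}$, which degenerates near the boundary $4|\widetilde{\bb}\cdot\bj|^2 = 1$; however, on $S_1$ one can instead use the elementary bound $|e^{-\lambda_2 t} - e^{-\lambda_1 t}| \le |\lambda_1 - \lambda_2|\, t\, e^{-t/2}$ (mean value / integral form $e^{-\lambda_2 t} - e^{-\lambda_1 t} = (\lambda_1 - \lambda_2)\int_0^t e^{-\lambda_2 s - \lambda_1(t-s)}\,ds$ with $\mathrm{Re} = \tfrac12$ throughout), giving $|\widehat{G}| \le t e^{-t/2} \le C e^{-t/4}$. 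Hence $|\widehat{G}_1| \le C|\widetilde{\bb}\cdot\bj| e^{-t/4} \le C|\widetilde{\bb}| |\bj| e^{-t/4}$, and since $|\widetilde{\bb}\cdot\bj/\lambda_1| = |\widetilde{\bb}\cdot\bj|/|\widetilde{\bb}\cdot\bj| = 1$, also $|\widehat{G}_2| \le C|\bj| e^{-t/4}$.

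\textbf{Region $S_3$ (strongly damped regime).} Here $1 - 4|\widetilde{\bb}\cdot\bj|^2 > \tfrac14$, so $|\widetilde{\bb}\cdot\bj|^2 < \tfrac{3}{16}$, both $\lambda_{1,2}$ are real and positive with $\lambda_1 \in (\tfrac12, 1)$ bounded away from the small eigenvalue, while $\lambda_2 = \tfrac12(1 - \sqrt{1-4|\widetilde{\bb}\cdot\bj|^2})$; by the elementary inequality $1 - \sqrt{1-x} \ge x/2$ for $x \in [0,1]$ with $x = 4|\widetilde{\bb}\cdot\bj|^2$, we get $\lambda_2 \ge |\widetilde{\bb}\cdot\bj|^2$, and also $\lambda_1 \lambda_2 = |\widetilde{\bb}\cdot\bj|^2$ (product of roots) so $\lambda_2 \le 2|\widetilde{\bb}\cdot\bj|^2$. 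Since $\lambda_1 - \lambda_2 = \sqrt{1 - 4|\widetilde{\bb}\cdot\bj|^2} > \tfrac12$ is bounded below, $|\widehat{G}| = \tfrac{e^{-\lambda_2 t} - e^{-\lambda_1 t}}{\lambda_1 - \lambda_2} \le 2 e^{-\lambda_2 t} \le 2 e^{-|\widetilde{\bb}\cdot\bj|^2 t}$; and $\sqrt{|\lambda_1|^2 + |\widetilde{\bb}\cdot\bj|^2} \le \sqrt{1 + \tfrac{3}{16}} \le C$, giving $|\widehat{G}_1| \le C e^{-|\widetilde{\bb}\cdot\bj|^2 t}$. For $\widehat{G}_2$ the extra factor is $|\widetilde{\bb}\cdot\bj/\lambda_1| \le 2|\widetilde{\bb}\cdot\bj|$ (since $\lambda_1 \ge \tfrac12$), so $|\widehat{G}_2| \le C|\widetilde{\bb}\cdot\bj| e^{-|\widetilde{\bb}\cdot\bj|^2 t}$.

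\textbf{Region $S_2$ (intermediate regime).} Here $0 < 1 - 4|\widetilde{\bb}\cdot\bj|^2 \le \tfrac14$, so $\tfrac{3}{16} \le |\widetilde{\bb}\cdot\bj|^2 < \tfrac14$, both eigenvalues are real with $\lambda_2 = \tfrac12(1 - \sqrt{1-4|\widetilde{\bb}\cdot\bj|^2}) \ge \tfrac12(1 - \tfrac12) = \tfrac14$ and $\lambda_1 \le \tfrac34$, so both $\lambda_1, \lambda_2 \in [\tfrac14, \tfrac34]$ are comparable to constants, in particular $\mathrm{Re}\,\lambda_2 \ge \tfrac14$. The denominator $\lambda_1 - \lambda_2 = \sqrt{1 - 4|\widetilde{\bb}\cdot\bj|^2}$ may be small, so again use $|e^{-\lambda_2 t} - e^{-\lambda_1 t}| \le |\lambda_1 - \lambda_2|\,t\,e^{-t/4}$ (integral representation with $\mathrm{Re}$ between $\tfrac14$ and $\tfrac34$), yielding $|\widehat{G}| \le t e^{-t/4} \le C e^{-t/8}$. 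Since $|\widetilde{\bb}\cdot\bj|^2 < \tfrac14$ gives $\sqrt{|\lambda_1|^2 + |\widetilde{\bb}\cdot\bj|^2} \le C$ and $|\widetilde{\bb}\cdot\bj/\lambda_1| \le \tfrac{1/2}{1/4} = 2$, we obtain $|\widehat{G}_1|, |\widehat{G}_2| \le C e^{-t/8}$.

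\textbf{Main obstacle.} The only genuine difficulty is the apparent singularity of $\widehat{G} = (e^{-\lambda_2 t} - e^{-\lambda_1 t})/(\lambda_1 - \lambda_2)$ near the transition surface $\{4|\widetilde{\bb}\cdot\bj|^2 = 1\}$, where $\lambda_1 = \lambda_2$; this is what forces the splitting into $S_1, S_2, S_3$ and the use of the integral (divided-difference) representation of $\widehat{G}$ rather than naive bounds on the quotient — on $S_2$ (and the boundary part of $S_1$) the quotient bound $\le 2/|\lambda_1-\lambda_2|$ blows up, but $|\widehat{G}| = |\int_0^t e^{-\lambda_2 s - \lambda_1(t-s)}\,ds| \le t\, e^{-ct}$ stays bounded and decays, at the cost of the slightly worse exponents $t/4, t/8$ in place of $t/2$. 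Everything else is elementary bookkeeping with the quadratic formula and the elementary inequalities $1 - \sqrt{1-x} \ge x/2$ and $\lambda_1\lambda_2 = |\widetilde{\bb}\cdot\bj|^2$, $\lambda_1 + \lambda_2 = 1$.
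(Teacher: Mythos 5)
Your proof is correct and follows essentially the same route as the paper: the same three-region decomposition of frequency space, the same strategy of first bounding the divided difference $\widehat G$ in each region to tame the degenerate denominator $\lambda_1-\lambda_2$, and then multiplying by the factors $\sqrt{|\lambda_1|^2+|\widetilde{\bb}\cdot\bj|^2}$ and $|\widetilde{\bb}\cdot\bj/\lambda_1|$. The only (cosmetic) difference is that you use the single integral identity $\widehat G=\int_0^t e^{-\lambda_2 s-\lambda_1(t-s)}\,ds$ on both $S_1$ and $S_2$, where the paper instead invokes Euler's formula on $S_1$ and the mean value theorem on $S_2$, and on $S_3$ you get $|\widehat G|\le 2e^{-\lambda_2 t}$ a bit more directly.
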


\begin{proof}[Proof of Proposition {\rm \ref{pro2}}]
To estimate \( |\widehat{G}_1| \), \( |\widehat{G}_2| \), and \( |\widehat{G}_3| \), we  proceed in steps.

\medskip
\noindent
\textbf{Step 1: Estimate for $\widehat{G}(\bj,t)$.} Recall that
\begin{align*}
\widehat{G}(\bj,t) := \frac{e^{-\lambda_{2}t} - e^{-\lambda_{1}t}}{\lambda_{1} - \lambda_{2}}.
\end{align*}
In the following we estimate $\widehat{G}(\bj,t)$ for each region $S_1$, $S_2$, and $S_3$.

\medskip
\noindent
\underline{\textbf{Case 1: $\bj \in S_1$}}. Let
\[
\sigma := \sqrt{4|\widetilde{\bb}\cdot \bj|^2 - 1},
\]
so that $\lambda_{1,2} = \frac{1}{2} \pm i \frac{\sigma}{2}$. Then, making use of Euler's formula,
\[
\widehat{G}(\bj,t) = e^{-\frac{t}{2}} \cdot \frac{e^{i\frac{\sigma}{2}t} - e^{-i\frac{\sigma}{2}t}}{i\sigma}
= t e^{-\frac{t}{2}} \cdot \frac{\sin\left( \frac{\sigma}{2}t \right)}{\frac{\sigma}{2}t}.
\]
The fact $\left|\frac{\sin \theta}{\theta}\right| \leq 1$ for all $\theta \in \mathbb{R}$ means
\[
|\widehat{G}(\bj,t)| \leq t e^{-\frac{t}{2}}.
\]
The classical inequality
\begin{align}\label{ye}
z^n e^{-z} \leq C_n, \quad \text{for all } z>0,\; n \in \mathbb{N},
\end{align}
with $z = \frac{t}{4}$, $n=1$, gives rise to
\[
|\widehat{G}(k,t)| \leq C e^{-\frac{t}{4}}.
\]

\medskip
\noindent
\underline{\textbf{Case 2: $\bj \in S_2$}}.  Note
\begin{align}
&-\frac{3}{4}\leq-\lambda_1=\frac{-1-\sqrt{1-4|\widetilde{\bb}\cdot \bj|^{2}}}{2}<-\frac{1}{2},\label{S_2}\\
&-\frac{1}{2}<-\lambda_2 =\frac{-1+\sqrt{1-4|\widetilde{\bb}\cdot \bj|^{2}}}{2}\leq-\frac{1}{4}.\notag
\end{align}
By the standard mean-value theorem, there exists a point $\zeta \in (-\lambda_1,-\lambda_2)$ such that
\[
|\widehat{G}(\bj,t)| = \left| \frac{e^{-\lambda_2 t} - e^{-\lambda_1 t}}{\lambda_1 - \lambda_2} \right| = t e^{\zeta t}.
\]
The fact $\zeta \leq -\frac{1}{4}$ implies
\[
|\widehat{G}(\bj,t)| \leq t e^{-\frac{1}{4} t} \leq C e^{-\frac{t}{8}}.
\]

\medskip
\noindent
\underline{\textbf{Case 3: $\bj \in S_3$}}. The root $-\lambda_1$ fulfills
\begin{align}\label{S_3}
-1 \leq -\lambda_1 = -\frac{1}{2} \left(1 + \sqrt{1 - 4{(\widetilde{\bb} \cdot \bj)^2}} \right) < -\frac{3}{4}.
\end{align}
While the root $-\lambda_2$ may go to zero and  we rewrite it as
\[
-\lambda_2 = -\frac{1}{2} \left(1 - \sqrt{1 - 4{(\widetilde{\bb} \cdot \bj)^2}} \right)
= \frac{-2(\widetilde{\bb} \cdot \bj)^2}{1 + \sqrt{1 - 4(\widetilde{\bb} \cdot \bj)^2}}.
\]
As $\bj \in S_3$ means $\frac{1}{4} <1 - 4(\widetilde{\bb}\cdot \bj)^2 \leq 1$, one has
\[
-\frac{4(\widetilde{\bb}\cdot \bj)^2}{3} < -\lambda_2 \leq -(\widetilde{\bb}\cdot \bj)^2, \quad
\lambda_1 - \lambda_2 = \sqrt{1 - 4|\widetilde{\bb}\cdot \bj|^2} \geq \frac{1}{2}.
\]
Therefore,
\[
|\widehat{G}(\bj,t)| \leq C\left({e^{-\frac{3}{4} t} + e^{-(\widetilde{\bb}\cdot \bj)^2}}\right)
\leq C{e^{-(\widetilde{\bb}\cdot \bj)^2t}}.
\]

\medskip
\noindent
\textbf{Step 2: Estimates for \( |\widehat{G}_1| \) and \( |\widehat{G}_2| \).}
Recalling from \eqref{2.5'} that
\begin{gather}
\begin{split}\label{G_12}
|\widehat{G}_1|^2 &= |\widehat{G}|^2 \left( |\lambda_1|^2 + |\widetilde{\bb}\cdot \bj|^2 \right), \\
|\widehat{G}_2|^2 &= |\widehat{G}|^2 \left( |\lambda_1|^2 + |\widetilde{\bb}\cdot \bj|^2 \right) \left| \frac{\widetilde{\bb}\cdot \bj}{\lambda_1} \right|^2,
\end{split}
\end{gather}
then we need to estimate the quantities \( |\lambda_1|^2 + |\widetilde{\bb}\cdot \bj|^2 \) and \( \left| \frac{\widetilde{\bb}\cdot \bj}{\lambda_1} \right|^2 \) in each region.

\medskip
\noindent
\underline{\textbf{Estimate of \( |\lambda_1|^2 + |\widetilde{\bb}\cdot \bj|^2 \)}}:

\smallskip
\noindent
\textit{For \(\bj \in S_1\)}: Since \( 1 - 4|\widetilde{\bb}\cdot \bj|^2 \leq 0 \), there exists a constant \( C \) such that
\[
|\lambda_1|^2 + |\widetilde{\bb}\cdot \bj|^2 = 2|\widetilde{\bb}\cdot \bj|^2 \leq C|\bj|^2.
\]

\smallskip
\noindent
\textit{For \(\bj \in S_2 \cup S_3\)}: Combing  \eqref{S_2} and \eqref{S_3}, we know that \( |\lambda_1| \leq C \), and
\[
|\lambda_1|^2 + |\widetilde{\bb}\cdot \bj|^2 \leq C.
\]

\medskip
\noindent
\underline{\textbf{Estimate of \( \left| \frac{\widetilde{\bb}\cdot \bj}{\lambda_1}\right|^2 \)}}:

\smallskip
\noindent
\textit{For \(\bj \in S_1\)}: By the definition of $\lambda_1$, one has
\begin{align}\label{S1_1}
\left| \frac{\widetilde{\bb}\cdot \bj}{\lambda_1} \right|^2 \leq C.
\end{align}

\smallskip
\noindent
\textit{For \(\bj \in S_2\)}: \eqref{S_2} implies \( |\lambda_1| \in \left[ \frac{1}{2}, \frac{3}{4} \right] \), and the fact \( 0 < 1 - 4|\widetilde{\bb}\cdot \bj|^2 \) means \eqref{S1_1} also is true.

\smallskip
\noindent
\textit{For \(\bj \in S_3\)}: From \eqref{S_3}, we have \( |\lambda_1| \in \left[ \frac{3}{4}, 1 \right] \), then
\[
\left| \frac{\widetilde{\bb}\cdot \bj}{\lambda_1} \right| \leq C|\widetilde{\bb}\cdot \bj|.
\]

\medskip
\noindent
Combining the above, we obtain
\begin{equation}\label{eq:lambda-bounds}
|\lambda_1|^2 + |\widetilde{\bb}\cdot \bj|^2 \leq
\begin{cases}
C|\bj|, & \bj \in S_1, \\
C, & \bj \in S_2 \cup S_3,
\end{cases}
\quad
\left| \frac{\widetilde{\bb}\cdot \bj}{\lambda_1} \right|^2 \leq
\begin{cases}
C, & \bj \in S_1 \cup S_2, \\
|\widetilde{\bb}\cdot \bj|^2, & \bj \in S_3.
\end{cases}
\end{equation}
Inserting  the bounds for $|\widehat{G}(\bj,t)|$ derived in \textbf{Step 1} and \eqref{eq:lambda-bounds} into \eqref{G_12}, we obtain
\[
\begin{cases}
|\widehat{G}_1(\bj,t)|, \quad |\widehat{G}_2(\bj,t)| \leq C|\bj|e^{-\frac{t}{4}}, & \bj \in S_1, \\
|\widehat{G}_1(\bj,t)|, \quad |\widehat{G}_2(\bj,t)| \leq Ce^{-\frac{t}{8}}, &  \bj \in S_2, \\
|\widehat{G}_1(\bj,t)| \leq Ce^{-|\widetilde{\bb}\cdot \bj|^2 t}, \quad |\widehat{G}_2(\bj,t)| \leq C|\widetilde{\bb}\cdot \bj|e^{-|\widetilde{\bb}\cdot \bj|^2 t}, &  \bj \in S_3.
\end{cases}
\]

\medskip
\noindent
\textbf{Step 3: Estimate for \( |\widehat{G}_3| \).}  Since \( \Re \lambda_1 = \frac{1}{2} \) for all \( \bj \neq \0 \), we get
\[
|\widehat{G}_3| \leq e^{-\frac{t}{2}}, \quad \text{for all } \bj \in \mathbb{Z}^n \setminus \{\0\}.
\]

\smallskip
\noindent
\textit{For \(\bj \in S_1\)},
\[
|\widehat{G}_3| = \left| e^{-\lambda_1 t} \right| = \left| e^{-\frac{t}{2}} \cdot e^{-i\frac{\sqrt{4|\widetilde{\bb}\cdot \bj|^2 - 1}}{2}} \right| \leq e^{-\frac{t}{2}}.
\]

\smallskip
\noindent
\textit{For \(\bj \in  S_2 \cup S_3\)},
\[
|\widehat{G}_3| = e^{-\frac{t}{2}} \cdot e^{-\frac{\sqrt{1 - 4|\widetilde{\bb}\cdot \bj|^2}}{2} t} \leq e^{-\frac{t}{2}}.
\]
Thus, for any \( \bj \in \mathbb{Z}^n \setminus \{\0\} \), we conclude that there exists a constant \( C \) such that
\[
|\widehat{G}_3| \leq C e^{-\frac{t}{2}}.
\]
This ends the proof of Proposition \ref{pro2}.
\end{proof}
\section{Linear stability}\label{2/3}

In this section, we establish the linear stability and derive the time-decay estimates for system \eqref{lineartheorem}.
Recall the linearized equations
\begin{equation}\label{linearrepeat}
\left\{
\begin{array}{l}
{\partial _t}\bV + \bV - \widetilde{\bb} \cdot \nabla \bH = \0,\\[1ex]
{\partial _t}\bH - \widetilde{\bb} \cdot \nabla \bV = \0,\\[1ex]
\mathrm{div}\,\bV = \mathrm{div}\,\bH = 0,\\[1ex]
(\bV,\bH)|_{t = 0} = (\bV_0, \bH_0).
\end{array}
\right.
\end{equation}

\begin{proof}[Proof of Theorem {\rm\ref{thm1}}]
  Operating the operator $\Lambda^s$ to $\eqref{linearrepeat}_1$ and $\eqref{linearrepeat}_2$, executing the $L^2$ inner products with $\Lambda^s \bV$ and $\Lambda^s \bH$, respectively, and summing the results, we have
\begin{align*}
\frac{1}{2} \frac{d}{d t}\left(\|\bV\|_{\dot{H}^s}^2+\|\bH\|_{\dot{H}^s}^2\right)
+ \|\bV\|_{\dot{H}^s}^2
= \int_{\mathbb{T}^n} (\widetilde{\bb}\cdot\nabla \Lambda^s \bH) \cdot \Lambda^s \bV \,d x
+ \int_{\mathbb{T}^n} (\widetilde{\bb}\cdot\nabla \Lambda^s \bV)\cdot \Lambda^s \bH \,d x.
\end{align*}
By integration by parts and using $\mathrm{div}\,\bV=\mathrm{div}\,\bH=0$, one easily checks that
\begin{align*}
\int_{\mathbb{T}^n} (\widetilde{\bb}\cdot\nabla \Lambda^s \bH) \cdot \Lambda^s \bV \,d x
+ \int_{\mathbb{T}^n} (\widetilde{\bb}\cdot\nabla \Lambda^s \bV)\cdot \Lambda^s \bH \,d x = 0.
\end{align*}
Consequently,
\begin{align}\label{gpp.4.3}
\frac{1}{2}\frac{d}{d t}\left(\|\bV\|_{\dot{H}^s}^2+\|\bH\|_{\dot{H}^s}^2\right)
+ \|\bV\|_{\dot{H}^s}^2 = 0.
\end{align}
Integrating \eqref{gpp.4.3} in time gives, for any $0 \le s \le m$,
\begin{align}\label{gpp.4.4}
\|\bV(t)\|_{\dot{H}^s}^2+\|\bH(t)\|_{\dot{H}^s}^2
\le \|\bV_0\|_{\dot{H}^s}^2+\|\bH_0\|_{\dot{H}^s}^2.
\end{align}
The above energy inequality ensures that for any $m\ge0$, the solution $(\bV,\bH)$ to \eqref{linearrepeat} remains uniformly bounded in time.
To derive decay rates, we consider the Fourier representation provided by Proposition~\ref{pro1}, which reads
\begin{align}\label{new1}
\begin{cases}
|\widehat{\bV}(\bj,t)|^2 \le \big(|\widehat{G}_2(\bj,t)|^2 + |\widehat{G}_3(\bj,t)|^2\big)
|\widehat{\boldsymbol{\phi}}_0(\bj)|^2, \\[1ex]
|\widehat{\bH}(\bj,t)|^2 \le \big(|\widehat{G}_1(\bj,t)|^2 + |\widehat{G}_3(\bj,t)|^2\big)
|\widehat{\boldsymbol{\phi}}_0(\bj)|^2,
\end{cases}
\end{align}
where $\boldsymbol{\phi}_0 := (\bV_0, \bH_0)^T$.
Combining \eqref{new1} with the kernel estimates in Proposition~\ref{pro2} and assuming \eqref{mean-zero}, one can establish the time-decay properties of the Sobolev norms of $\bV$ and $\bH$.

Indeed, since \eqref{mean-zero} ensures that for all $t\ge0$,
\begin{equation}\label{mean-zero2}
\widehat{\bV}(\0,t) = \widehat{\bH}(\0,t) = \0,
\end{equation}
the zero-frequency component vanishes.
Applying \eqref{new1}, \eqref{mean-zero2}, and the Plancherel theorem,  for any $0 \le s \le m$,  we infer
\begin{align}\label{U}
\lVert \bV(\bx, t)\rVert^2_{\dot{H}^{s}}\leq \sum_{\bj\neq \0}\lvert \bj\rvert^{2s}|\widehat{G}_2|^2|\widehat{\boldsymbol{\phi}}_0|^2
+\sum_{\bj\neq \0}\lvert \bj\rvert^{2s}|\widehat{G}_3|^2|\widehat{\boldsymbol{\phi}}_0|^2=:I_2+I_3,
\end{align}
and
\begin{align*}
\lVert \bH(\bx, t)\rVert^2_{\dot{H}^{s}}&=\sum_{\bj \in \mathbb{Z}^n \setminus \{\0\}} |\bj|^{2s} |\widehat{\bH}(\bj, t)|^2\\
&\leq \sum_{\bj\neq \0}\lvert \bj\rvert^{2s}|\widehat{G}_1|^2|\widehat{\boldsymbol{\phi}}_0|^2+
\sum_{\bj\neq \0}\lvert \bj\rvert^{2s}|\widehat{G}_3|^2|\widehat{\boldsymbol{\phi}}_0|^2=:I_1+I_3.
\end{align*}
Proposition \ref{pro2} implies
\begin{align}\label{25}
I_3= \sum_{\bj\neq \0}\lvert \bj\rvert^{2s}|\widehat{G}_3|^2|\widehat{\boldsymbol{\phi}}_0|^2\leq  e^{-t}  \sum_{\bj\neq \0} \lvert \bj\rvert^{2s} \lvert \widehat{\boldsymbol{\phi}}_0\rvert^{2}\leq e^{-t}\lVert{\boldsymbol{\phi}}_0\rVert^2_{\dot{H}^{s}}.
\end{align}
For the term $I_1$, Proposition \ref{pro2} and the Diophantine condition \eqref{Diophantine} yield
\begin{align}
I_1 &\leq C \sum_{\bj \in S_1} |\bj|^{2(s+1)} e^{-\frac{t}{2}} |\widehat{\boldsymbol{\phi}}_0|^2 + C \sum_{\bj \in S_2} |\bj|^{2s} e^{-\frac{t}{4}} |\widehat{\boldsymbol{\phi}}_0|^2 + C \sum_{\bj \in S_3} e^{-2|\widetilde{\bb} \cdot \bj|^2 t} |\bj|^{2s}|\widehat{\boldsymbol{\phi}}_0|^2\notag\\
&\leq Ce^{-\frac{t}{2}}\|{\boldsymbol{\phi}}_0(\bx)\|_{\dot{H}^{s+1}}^2
+Ce^{-\frac{t}{4}}\|{\boldsymbol{\phi}}_0(\bx)\|_{\dot{H}^{s}}^2+C\sum_{\bj\in S_3}
e^{-\frac{2c^2}{|\bj|^{2r}} t}\lvert \bj\rvert^{2s} \lvert\hat{\boldsymbol{\phi}}_0\rvert^{2}.\notag
\end{align}
By virtue of \eqref{ye}, the third term can be bounded as
\begin{align}
	&\sum_{\bj\in S_3}e^{-\frac{2c^2}{|\bj|^{2r}} t}\lvert \bj\rvert^{2s} |\widehat{\boldsymbol{\phi}}_0|^2\notag\\
	=&\sum_{\bj\in S_3}e^{-\frac{2c^2}{|\bj|^{2r}} t}\left(\frac{t}{|\bj|^{2r}}\right)^{\frac{m-s}{r}}t^{-\frac{m-s}{r}}|\bj|^{{2(m -s)}}\lvert \bj\rvert^{2s} |\widehat{\boldsymbol{\phi}}_0|^2\notag\\
	\leq&  t^{-\frac{m-s}{r}} \|{\boldsymbol{\phi}}_0(\bx)\|_{\dot{H}^{m}}^2\sup_{\bj\in S_3}e^{-\frac{2c^2}{|\bj|^{2r}} t}\left(\frac{t}{|\bj|^{2r}}\right)^{\frac{m-s}{r}}\notag\\
	\leq& Ct^{-\frac{m-s}{r}} \|{\boldsymbol{\phi}}_0(\bx)\|_{\dot{H}^{m}}^2,\notag
\end{align}	
and
\begin{align}
e^{-\frac{2c^2}{|\bj|^{2r}} t}\lvert \bj\rvert^{2s} |\widehat{\boldsymbol{\phi}}_0|^2\leq \lvert \bj\rvert^{2s} |\widehat{\boldsymbol{\phi}}_0|^2.\notag
\end{align}
It follows
\begin{align}
\sum_{\bj\in S_3}e^{-\frac{2c^2}{|\bj|^{2r}} t}\lvert \bj\rvert^{2s} |\widehat{\boldsymbol{\phi}}_0|^2
	\leq& C(1+t)^{-\frac{m-s}{r}} \|{\boldsymbol{\phi}}_0(\bx)\|_{\dot{H}^{m}}^2.\notag
\end{align}
Through the above estimates, we deduce
\begin{align}\label{24}
\| \bH(t) \|_{\dot{H}^s}^2
\le Ce^{-\frac{t}{2}}\|{\boldsymbol{\phi}}_0\|_{\dot{H}^{s+1}}^2
+Ce^{-\frac{t}{4}}\|{\boldsymbol{\phi}}_0\|_{\dot{H}^{s}}^2
+ C (1+t)^{-\frac{m-s}{r}} \, \| \boldsymbol{\phi}_0 \|_{\dot{H}^m}^2.
\end{align}
The  estimate of \(I_2\) is given by
\begin{align}
I_2 &\leq C \sum_{\bj \in S_1} |\bj|^{2(s+1)} e^{-\frac{t}{2}} |\widehat{\boldsymbol{\phi}}_0|^2 + C \sum_{\bj\in S_2} |\bj|^{2s} e^{-\frac{t}{4}} |\widehat{\boldsymbol{\phi}}_0|^2 + C \sum_{\bj \in S_3}|\widetilde{\bb} \cdot \bj|^2 e^{-2|\widetilde{\bb} \cdot \bj|^2 t} |\bj|^{2s}|\widehat{\boldsymbol{\phi}}_0|^2.\notag
\end{align}
By virtue of the Diophantine condition \eqref{Diophantine} and \eqref{ye}, the third term admits
\begin{align}
	& \sum_{\bj \in S_3}|\widetilde{\bb} \cdot \bj|^2 e^{-2|\widetilde{\bb} \cdot \bj|^2 t} |\bj|^{2s}|\widehat{\boldsymbol{\phi}}_0|^2\notag\\
	=&\sum_{\bj\in S_3}\left({\lvert\widetilde{\bb}\cdot \bj\rvert^{2}t}\right)t^{-1}
e^{-2\lvert \widetilde{\bb}\cdot \bj\rvert^{2}t}\left( {{\lvert\widetilde{\bb}\cdot \bj\rvert^{2}}t} \right)^{\frac{m-s}{r}}\left( {{\lvert\widetilde{\bb}\cdot \bj\rvert^{2}}t} \right)^{-\frac{m-s}{r}}
\lvert \bj\rvert^{2s} \lvert\widehat{\boldsymbol{\phi}}_0\rvert^{2}\notag\\
	\leq&\sum_{\bj\in S_3}\left({\lvert\widetilde{\bb}\cdot \bj\rvert^{2}t}\right)t^{-1}
e^{-2\lvert \widetilde{\bb}\cdot bj\rvert^{2}t}\left( {{\lvert\widetilde{\bb}\cdot \bj\rvert^{2}}t} \right)^{\frac{m-s}{r}}t^{-\frac{m-s}{r}}|\bj|^{2r\frac{m-s}{r}}
\lvert \bj\rvert^{2s} \lvert\widehat{\boldsymbol{\phi}}_0\rvert^{2}\notag\\
=&\sum_{\bj\in S_3} t^{-\left(1+\frac{m-s}{r}\right)}e^{-{2\lvert \widetilde{\bb}\cdot \bj\rvert^{2}}t}\left( {{\lvert\widetilde{\bb}\cdot \bj\rvert^{2}}t} \right)^{\left(1+\frac{m-s}{r}\right)}
\lvert \bj\rvert^{2m}\lvert\widehat{\boldsymbol{\phi}}_0\rvert^{2}\notag\\
	\leq&  t^{-\left(1+\frac{m-s}{r}\right)} \|{\boldsymbol{\phi}}_0\|_{\dot{H}^{m}}^2\sup_{\bj\in S_3}e^{-{2\lvert \widetilde{\bb}\cdot \bj\rvert^{2}}t}\left( {{\lvert\widetilde{\bb}\cdot \bj\rvert^{2}}t} \right)^{\left(1+\frac{m-s}{r}\right)}\notag\\
	\leq& Ct^{-\left(1+\frac{m-s}{r}\right)} \|{\boldsymbol{\phi}}_0\|_{\dot{H}^{m}}^2.\notag
\end{align}
Since $\widetilde{\bb}$ is fixed, one has
\begin{align}
{|\widetilde{\bb}\cdot \bj|^2} e^{-{2\lvert\widetilde{\bb}\cdot \bj\rvert^{2}}t} \lvert \bj\rvert^{2s} \lvert\widehat{\boldsymbol{\phi}}_0\rvert^{2}\leq \lvert \bj\rvert^{2s} |\widehat{\boldsymbol{\phi}}_0|^2.\notag
\end{align}
Therefore
\begin{align*}
\sum_{\bj \in S_3}|\widetilde{\bb} \cdot \bj|^2 e^{-2|\widetilde{\bb} \cdot \bj|^2 t} |\bj|^{2s}|\widehat{\boldsymbol{\phi}}_0|^2&\leq C (1 + t)^{-\left(1+\frac{m-s}{ r}\right)} \|{\boldsymbol{\phi}}_0\|_{\dot{H}^{m}}^2.
\end{align*}
Thus, we get
\begin{align*}
\| \bV(t) \|_{\dot{H}^s}^2
\le Ce^{-\frac{t}{2}}\|{\boldsymbol{\phi}}_0\|_{\dot{H}^{s+1}}^2
+Ce^{-\frac{t}{4}}\|{\boldsymbol{\phi}}_0\|_{\dot{H}^{s}}^2
+ C (1 + t)^{-\left(1+\frac{m-s}{ r}\right)} \| \boldsymbol{\phi}_0 \|_{\dot{H}^m}^2,
\end{align*}
which, together with  \eqref{gpp.4.4} and \eqref{24}, finishes the proof of Theorem~\ref{thm1}.
\end{proof}

\section{Nonlinear stability}\label{s:6}
This section is devoted to the nonlinear stability  presented in Theorem~\ref{thm}. For clarity, we divide the processes into three parts.

\subsection{A priori estimates}\label{4/1}
 The local well-posedness of system \eqref{equation} could be derived by the classical approaches, such as the Friedrichs mollifier or Fourier cutoff techniques; see, for example, \cite{li-2017-local,fefferman-2014-local,Majda-Bertozzi}. So it suffices to present uniform \emph{a priori} bounds to extend the local solution globally in time.

We first set the following modified energy functional
\begin{align}
	Q_s(t) := a \lVert (\bv, \bb)(t) \rVert_{H^s}^2
	- \sum_{l = 0}^{s} \int_{\mathbb{T}^n} (\widetilde{\bb} \cdot \nabla \bb)(t) \cdot \Lambda^{2l-2} \bv(t) \, d\bx,
	\quad s\in[0, m],\label{lya}
\end{align}
where \(a > 0\) is a suitably chosen constant. We shall focus on the evolution of $Q_s(t)$.

We begin with handling the first term on the right-hand side of \eqref{lya}.
\begin{prop}\label{prop4.2}
Let $(\bv,\bb)$ be a smooth global solution to $\eqref{equation}$.
Let $m > 2 + r + \frac{n}{2}$. For any $s \in [0, m]$ and $t \in [0, T]$, it follows that
\begin{align}\label{2.33}
\frac{1}{2}\frac{d}{dt} \lVert( \Lambda^s \bv, \Lambda^s \bb)\rVert_{L^2}^2 +\lVert \Lambda^{s} \bv \rVert_{L^2}^2
\leq C \left( \lVert \nabla \bv \rVert_{L^{\infty}} + \lVert \bb \rVert_{H^{m-1-r}}^2 \right) \lVert( \Lambda^s \bv, \Lambda^s \bb)\rVert_{L^2}^2.
\end{align}
Particularly,
\begin{align}\label{2.33'}
\frac{1}{2}\frac{d}{dt} \lVert (\bv, \bb)(t) \rVert_{H^m}^2 +\lVert \bv \rVert_{H^m}^2
\leq C \left( \lVert \nabla \bv \rVert_{L^{\infty}} + \lVert \bb \rVert_{H^{m-1-r}}^2 \right) \lVert (\bv, \bb)(t) \rVert_{H^m}^2.
\end{align}
\end{prop}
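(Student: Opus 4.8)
The plan is to run a standard high-order energy estimate on the symmetrized system \eqref{1.41} (equivalently \eqref{equation} after Leray projection), exploiting the cancellation of the linear coupling terms $\widetilde{\bb}\cdot\nabla\bb$ and $\widetilde{\bb}\cdot\nabla\bv$ in the combined $\dot H^s$ energy. First I would apply $\Lambda^s$ to the $\bv$-equation and the $\bb$-equation, take $L^2$ inner products with $\Lambda^s\bv$ and $\Lambda^s\bb$ respectively, and add. The damping term $\bv$ in the first equation produces exactly $\lVert\Lambda^s\bv\rVert_{L^2}^2$ on the left. The linear magnetic terms $\int\Lambda^s(\widetilde{\bb}\cdot\nabla\bb)\cdot\Lambda^s\bv + \int\Lambda^s(\widetilde{\bb}\cdot\nabla\bv)\cdot\Lambda^s\bb$ cancel after integration by parts, since $\widetilde{\bb}$ is constant and $\nabla\cdot\bv=0$ — this is the same computation as in \eqref{gpp.4.3}. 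The pressure term vanishes because $\mathbb P$ is a Fourier projector commuting with $\Lambda^s$ and $\nabla\cdot\bv=0$. What remains is to bound the nonlinear contributions
\[
-\int \Lambda^s(\bv\cdot\nabla\bv)\cdot\Lambda^s\bv
+\int \Lambda^s(\bb\cdot\nabla\bb)\cdot\Lambda^s\bv
-\int \Lambda^s(\bv\cdot\nabla\bb)\cdot\Lambda^s\bb
+\int \Lambda^s(\bb\cdot\nabla\bv)\cdot\Lambda^s\bb,
\]
up to the pressure-projection, which only helps.

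The second step is to estimate these four terms. For the $\bv\cdot\nabla\bv$ term I would write $\Lambda^s(\bv\cdot\nabla\bv)\cdot\Lambda^s\bv = [\Lambda^s,\bv\cdot\nabla]\bv\cdot\Lambda^s\bv + (\bv\cdot\nabla\Lambda^s\bv)\cdot\Lambda^s\bv$; the last term integrates to zero by $\nabla\cdot\bv=0$, and the commutator is controlled by Lemma \ref{jiaohuanzi} (Kato–Ponce) as $\lVert[\Lambda^s,\bv\cdot\nabla]\bv\rVert_{L^2}\lesssim \lVert\nabla\bv\rVert_{L^\infty}\lVert\Lambda^s\bv\rVert_{L^2}$, giving a bound $\lVert\nabla\bv\rVert_{L^\infty}\lVert\Lambda^s\bv\rVert_{L^2}^2$. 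The $\bv\cdot\nabla\bb$ term is handled identically after the divergence-free cancellation, yielding $\lVert\nabla\bv\rVert_{L^\infty}\lVert(\Lambda^s\bv,\Lambda^s\bb)\rVert_{L^2}^2$. The two magnetic nonlinearities $\bb\cdot\nabla\bb$ and $\bb\cdot\nabla\bv$ do not have an analogous cancellation, so they must be absorbed into the $\lVert\bb\rVert_{H^{m-1-r}}^2\lVert(\Lambda^s\bv,\Lambda^s\bb)\rVert_{L^2}^2$ term on the right. Using the product estimate (second part of Lemma \ref{jiaohuanzi}) one gets, schematically, $\lVert\Lambda^s(\bb\cdot\nabla\bb)\rVert_{L^2}\lesssim \lVert\bb\rVert_{L^\infty}\lVert\Lambda^{s+1}\bb\rVert_{L^2}+\lVert\nabla\bb\rVert_{L^\infty}\lVert\Lambda^s\bb\rVert_{L^2}$; pairing with $\lVert\Lambda^s\bv\rVert_{L^2}$ and using Young's inequality, the dangerous factor is $\lVert\Lambda^{s+1}\bb\rVert_{L^2}$ when $s=m$.

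The main obstacle, and the only nontrivial point, is producing the top-order magnetic factor $\lVert\bb\rVert_{H^{m-1-r}}$ rather than $\lVert\bb\rVert_{H^m}$ or $\lVert\bb\rVert_{H^{m-1}}$ in the estimate for $s=m$. The idea is to trade one derivative via the Diophantine Poincaré inequality of Lemma \ref{2.1}: rather than bounding $\lVert\bb\rVert_{L^\infty}$ or $\lVert\nabla\bb\rVert_{L^\infty}$ by $\lVert\bb\rVert_{H^m}$ directly, one uses $\lVert\bb\rVert_{H^{\sigma}}\lesssim\lVert\widetilde{\bb}\cdot\nabla\bb\rVert_{H^{\sigma+r}}$, and then in the energy-estimate context replaces $\widetilde{\bb}\cdot\nabla\bb$ using the $\bv$-equation $\widetilde{\bb}\cdot\nabla\bb = \partial_t\bv + \bv + \mathbf N_1$, which is why the functional $Q_s$ in \eqref{lya} carries the cross term $\int(\widetilde{\bb}\cdot\nabla\bb)\cdot\Lambda^{2l-2}\bv$ — but for Proposition \ref{prop4.2} itself the cleaner route is to keep all magnetic coefficients in low norms: use Sobolev embedding $H^{m-1-r}\hookrightarrow W^{1,\infty}$ (valid since $m-1-r>1+\tfrac n2$ by the hypothesis $m>2+r+\tfrac n2$) so that $\lVert\bb\rVert_{L^\infty}+\lVert\nabla\bb\rVert_{L^\infty}\lesssim\lVert\bb\rVert_{H^{m-1-r}}$, and then bound the remaining $\bb$-derivative factor by interpolation and Young's inequality, paying the extra derivatives onto $\lVert\Lambda^s\bv\rVert_{L^2}$ or $\lVert\Lambda^s\bb\rVert_{L^2}$ so that the $\lVert\bb\rVert_{H^{m-1-r}}^2$ coefficient appears multiplied only by $\lVert(\Lambda^s\bv,\Lambda^s\bb)\rVert_{L^2}^2$. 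Care is needed in the Leibniz/Kato–Ponce splitting to always keep the highest-order factor on $\Lambda^s\bv$ (not $\Lambda^{s+1}\bb$) so no term of order $m+1$ survives; any genuinely top-order magnetic term must be shown to cancel against the mirror term coming from the $\bb\cdot\nabla\bv$ and $\bv\cdot\nabla\bb$ pairing. Once all four nonlinearities are bounded by $C(\lVert\nabla\bv\rVert_{L^\infty}+\lVert\bb\rVert_{H^{m-1-r}}^2)\lVert(\Lambda^s\bv,\Lambda^s\bb)\rVert_{L^2}^2$, estimate \eqref{2.33} follows, and summing over $s=0,1,\dots,m$ (finitely many indices) gives \eqref{2.33'}.
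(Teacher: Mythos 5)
Your overall framework is the same as the paper's: apply $\Lambda^s$, take $L^2$ inner products, exploit cancellation of the linear coupling $\widetilde{\bb}\cdot\nabla\bb$ and $\widetilde{\bb}\cdot\nabla\bv$, and bound the nonlinear terms using commutator estimates (Lemma~\ref{jiaohuanzi}), the Sobolev embedding $H^{m-1-r}\hookrightarrow W^{1,\infty}$ coming from $m>2+r+\tfrac n2$, and Young's inequality. Your handling of $\bv\cdot\nabla\bv$ and $\bv\cdot\nabla\bb$ (commutator plus transport cancellation) also matches the paper.

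The gap is in the magnetic nonlinearities. You write that ``$\bb\cdot\nabla\bb$ and $\bb\cdot\nabla\bv$ do not have an analogous cancellation'' and therefore reach for the pure product estimate, which inevitably produces the dangerous factor $\lVert\Lambda^{s+1}\bb\rVert_{L^2}$ that cannot be closed when $s=m$. Later you hedge that some top-order magnetic piece ``must be shown to cancel against the mirror term coming from the $\bb\cdot\nabla\bv$ and $\bv\cdot\nabla\bb$ pairing,'' but this names the wrong pair. The $\bv\cdot\nabla\bb$ term is self-cancelling (transport structure), exactly as you noted earlier. The cancellation you actually need pairs $\mathrm{I}_2=\int\Lambda^s(\bb\cdot\nabla\bb)\cdot\Lambda^s\bv$ with $\mathrm{I}_4=\int\Lambda^s(\bb\cdot\nabla\bv)\cdot\Lambda^s\bb$: split both with the commutator (first part of Lemma~\ref{jiaohuanzi}), and the two leading pieces combine as
\begin{align*}
\int_{\mathbb{T}^n}(\bb\cdot\nabla\Lambda^s\bb)\cdot\Lambda^s\bv\,d\bx
+\int_{\mathbb{T}^n}(\bb\cdot\nabla\Lambda^s\bv)\cdot\Lambda^s\bb\,d\bx
=\int_{\mathbb{T}^n}\bb\cdot\nabla\big(\Lambda^s\bb\cdot\Lambda^s\bv\big)\,d\bx=0,
\end{align*}
by integration by parts and $\nabla\cdot\bb=0$. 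What remains are the commutators $[\Lambda^s,\bb\cdot\nabla]\bb$ and $[\Lambda^s,\bb\cdot\nabla]\bv$, which Kato--Ponce bounds purely in terms of $\lVert\nabla\bb\rVert_{L^\infty}$, $\lVert\nabla\bv\rVert_{L^\infty}$ and the $\dot H^s$ norms --- no $\Lambda^{s+1}$ survives. This commutator-plus-cancellation treatment of $\mathrm{I}_2+\mathrm{I}_4$, rather than a product estimate, is the missing ingredient; the rest of your sketch then goes through exactly as you describe.
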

\begin{proof}
 Executing the operator \(\Lambda^s\) to both sides of $\eqref{equation}_1$ and $\eqref{equation}_2$, operating the inner products of the results with \(\Lambda^s\bv\) and \(\Lambda^s\bb\), respectively, and then adding them together, we deduce
\[
\frac{1}{2} \frac{d}{dt} \left( \lVert \Lambda^s \bv \rVert_{L^2}^2 + \lVert \Lambda^s \bb \rVert_{L^2}^2 \right) + \lVert \Lambda^{s} \bv \rVert_{L^2}^2  = \sum_{j = 1}^{6} \mathrm{I}_j,
\]
where
\begin{align*}
\mathrm{I}_1 &= - \int_{\mathbb{T}^n} \Lambda^s (\bv \cdot \nabla \bv) \cdot \Lambda^s \bv \, d\bx, & \mathrm{I}_2 &= \int_{\mathbb{T}^n} \Lambda^s (\bb \cdot \nabla \bb) \cdot \Lambda^s \bv \, d\bx, \\[1ex]
\mathrm{I}_3 &= - \int_{\mathbb{T}^n} \Lambda^s (\bv \cdot \nabla \bb) \cdot \Lambda^s \bb \, d\bx, & \mathrm{I}_4 &= \int_{\mathbb{T}^n} \Lambda^s (\bb \cdot \nabla \bv) \cdot \Lambda^s \bb \, d\bx, \\[1ex]
\mathrm{I}_5 &= \int_{\mathbb{T}^n} (\widetilde{\bb} \cdot \nabla \Lambda^s \bb) \cdot \Lambda^s \bv \, d\bx, & \mathrm{I}_6 &= \int_{\mathbb{T}^n} (\tilde{b} \cdot \nabla \Lambda^s \bv) \cdot \Lambda^s \bb \, d\bx.
\end{align*}
Through direct computations and integrating by parts, we infer
\[\mathrm{I}_5 + \mathrm{I}_6 = \int_{\mathbb{T}^n} (\widetilde{\bb} \cdot \nabla \Lambda^s \bb) \cdot \Lambda^s \bv \, dx + \int_{\mathbb{T}^n} (\widetilde{\bb} \cdot \nabla \Lambda^s \bv) \cdot \Lambda^s \bb \, d\bx = 0.
\]
Applying Lemma \ref{jiaohuanzi} and \eqref{equation}$_3$, one has
\begin{align*}
\mathrm{I}_1 = - \int_{\mathbb{T}^n} \Lambda^s (\bv \cdot \nabla \bv) \cdot \Lambda^s \bv \, d\bx &= - \int_{\mathbb{T}^n} \left[ \Lambda^s (\bv \cdot \nabla \bv) - \bv \cdot \nabla \Lambda^s \bv \right] \cdot \Lambda^s \bv \, d\bx \\[1ex]
&\leq C \lVert \Lambda^s \bv \rVert_{L^2}^2 \lVert \nabla \bv \rVert_{L^{\infty}},
\end{align*}
\begin{align*}
\mathrm{I}_3 = - \int_{\mathbb{T}^n} \Lambda^s (\bv \cdot \nabla \bb) \cdot \Lambda^s \bb \, d\bx &= - \int_{\mathbb{T}^n} \left[ \Lambda^s (\bv \cdot \nabla \bb) - \bv \cdot \nabla \Lambda^s \bb \right] \cdot \Lambda^s \bb \, d\bx \\[1ex]
&\leq C \lVert \Lambda^s \bb \rVert_{L^2} \left( \lVert \Lambda^s \bv \rVert_{L^2} \lVert \nabla \bb \rVert_{L^{\infty}} + \lVert \Lambda^s \bb \rVert_{L^2} \lVert \nabla \bv \rVert_{L^{\infty}} \right),
\end{align*}
and
\begin{align*}
\mathrm{I}_2 + \mathrm{I}_4 &= \int_{\mathbb{T}^n} \Lambda^s (\bb \cdot \nabla \bb) \cdot \Lambda^s \bv \, d\bx + \int_{\mathbb{T}^n} \Lambda^s (\bb \cdot \nabla \bv) \cdot \Lambda^s \bb \, d\bx \\
&= \int_{\mathbb{T}^n} \left[ \Lambda^s (\bb \cdot \nabla \bb) - \bb \cdot \Lambda^s \nabla \bb \right] \cdot \Lambda^s \bv \, d\bx \\
&\quad + \int_{\mathbb{T}^n} \left[ \Lambda^s (\bb \cdot \nabla \bv) - \bb \cdot \Lambda^s \nabla \bv \right] \cdot \Lambda^s \bb \, d\bx \\[1ex]
&\leq C \lVert \Lambda^s \bv \rVert_{L^2} \lVert \Lambda^s \bb \rVert_{L^2} \lVert \nabla \bb \rVert_{L^{\infty}} \\[1ex]
&\quad + C \lVert \Lambda^s \bb \rVert_{L^2} \left( \lVert \Lambda^s \bv \rVert_{L^2} \lVert \nabla \bb \rVert_{L^{\infty}} + \lVert \Lambda^s \bb \rVert_{L^2} \lVert \nabla \bv \rVert_{L^{\infty}} \right).
\end{align*}
Making use of the Sobolev embedding
\begin{align}\label{2.55}
\lVert \nabla \bb \rVert_{L^{\infty}} \leq C \lVert \bb \rVert_{H^{m-1-r}}, \quad \text{with } m > 2 + r + \frac{n}{2},
\end{align}
which, together with the preceding estimates, yields
\begin{align*}
&\frac{1}{2} \frac{d}{dt} \left( \lVert \Lambda^s \bv \rVert_{L^2}^2 + \lVert \Lambda^s \bb \rVert_{L^2}^2 \right) + \lVert \Lambda^{s + 1} \bv \rVert_{L^2}^2\\[1ex]
 \leq &C \lVert \nabla \bv \rVert_{L^{\infty}} \left( \lVert \Lambda^s \bv \rVert_{L^2}^2 + \lVert \Lambda^s \bb \rVert_{L^2}^2 \right)+C\lVert \nabla \bb \rVert_{L^{\infty}}\lVert \Lambda^s u \rVert_{L^2} \lVert \Lambda^s \bb \rVert_{L^2} \\[1ex]
 \leq &C \lVert \nabla\bv \rVert_{L^{\infty}} \left( \lVert \Lambda^s \bv \rVert_{L^2}^2 + \lVert \Lambda^s \bb \rVert_{L^2}^2 \right)+C\lVert \bb \rVert_{H^{m-1-r}}\lVert \Lambda^s \bv \rVert_{L^2} \lVert \Lambda^s \bb \rVert_{L^2}.
\end{align*}
At last, the Young inequality results in
\begin{align*}
&\frac{1}{2} \frac{d}{dt} \left( \lVert \Lambda^s \bv \rVert_{L^2}^2 + \lVert \Lambda^s \bb \rVert_{L^2}^2 \right) + \lVert \Lambda^{s} \bv \rVert_{L^2}^2\\[1ex]
 \leq &C \lVert \nabla \bv \rVert_{L^{\infty}} \left( \lVert \Lambda^s \bv \rVert_{L^2}^2 + \lVert \Lambda^s \bb \rVert_{L^2}^2 \right)+\varepsilon\lVert \Lambda^{s} \bv \rVert^2_{L^2}+C_\varepsilon\lVert \bb \rVert^2_{H^{m-1-r}} \lVert \Lambda^s \bb \rVert^2_{L^2}.
\end{align*}
Absorbing the \( \varepsilon \)-term into the left-hand side yields the desired result. So far, we end the proof of Proposition~\ref{prop4.2}.
\end{proof}

Let us mention that, the norm $\lVert \bb \rVert_{H^{m-1-r}}$, which appears in Proposition~\ref{prop4.2}, plays an important role in estimating the nonlinear terms.
To exploit how dissipation acts on \( \bb \), in what follows,  we turn to the second term on the right-hand side of \eqref{lya}.
\begin{prop}\label{prop4.3}
Let $m > 2 + r + \frac{n}{2}$, and let $(\bv,\bb)$ be a smooth global solution to $\eqref{equation}$.
Then, for any $s \in [0, m]$ and $t \in [0, T]$, it holds that
\begin{align}\label{2.333}
-\frac{d}{dt} \sum_{l = 0}^{s}\int_{\mathbb{T}^n} (\widetilde{\bb} \cdot \nabla \bb) \cdot \Lambda^{2l-2} \bv \, d\bx
\leq &\left( 1+ |\widetilde{\bb}|^2 \right) \left\lVert \bv \right\rVert_{H^s}^2
-\frac{1}{2} \left\lVert \Lambda^{-1} (\widetilde{\bb} \cdot \nabla \bb) \right\rVert_{H^s}^2 \notag\\[1ex]
&+ C \left\lVert (\bv, \bb) \right\rVert_{H^s}^2 \left( \left\lVert \nabla \bv \right\rVert_{L^{\infty}} + \lVert \bb \rVert^2_{H^{m-1-r}} \right),
\end{align}
where $C > 0$ is a constant depending only on fixed parameters.
\end{prop}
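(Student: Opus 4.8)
The plan is to differentiate the cross (interaction) functional appearing in \eqref{2.333} in time, substitute the two evolution equations of \eqref{equation}, and read off the ``missing'' dissipation $-\|\Lambda^{-1}(\widetilde{\bb}\cdot\nabla\bb)\|_{H^s}^2$ for $\bb$, paying for it with terms that are either absorbed into $\|\bv\|_{H^s}^2$ or handled as nonlinear errors. Concretely, I would expand $-\frac{d}{dt}\sum_{l=0}^{s}\int_{\mathbb{T}^n}(\widetilde{\bb}\cdot\nabla\bb)\cdot\Lambda^{2l-2}\bv\,d\bx$ by the product rule and insert $\partial_t\bb=\widetilde{\bb}\cdot\nabla\bv-(\bv\cdot\nabla\bb-\bb\cdot\nabla\bv)$ and $\partial_t\bv=-\bv+\widetilde{\bb}\cdot\nabla\bb-\nabla p-(\bv\cdot\nabla\bv-\bb\cdot\nabla\bb)$; the pressure contribution vanishes after integrating $\nabla$ by parts since $\operatorname{div}(\widetilde{\bb}\cdot\nabla\bb)=0$. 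Throughout I would use that $\widetilde{\bb}\cdot\nabla$ is skew-adjoint on $\mathbb{T}^n$ and $\Lambda$ self-adjoint, so $\Lambda^{2l-2}=\Lambda^{l-1}\Lambda^{l-1}$ may be split symmetrically and the outer $\widetilde{\bb}\cdot\nabla$ integrated by parts freely; Lemma~\ref{lem:mean} guarantees that $\bv,\bb$ and hence $\widetilde{\bb}\cdot\nabla\bb$, $\widetilde{\bb}\cdot\nabla\bv$, $\bv\cdot\nabla\bb-\bb\cdot\nabla\bv$ are mean-free, which legitimizes every occurrence of $\Lambda^{-1}$.

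The resulting terms split into a linear group and a nonlinear group. In the linear group, the self-pairing of $\widetilde{\bb}\cdot\nabla\bb$ (coming from the term $\widetilde{\bb}\cdot\nabla\bb$ in $\partial_t\bv$) produces exactly $-\sum_{l=0}^{s}\|\Lambda^{l-1}(\widetilde{\bb}\cdot\nabla\bb)\|_{L^2}^2=-\|\Lambda^{-1}(\widetilde{\bb}\cdot\nabla\bb)\|_{H^s}^2$, the good term. The pairing of $\widetilde{\bb}\cdot\nabla(\widetilde{\bb}\cdot\nabla\bv)$ (from the term $\widetilde{\bb}\cdot\nabla\bv$ in $\partial_t\bb$) with $\Lambda^{2l-2}\bv$, after integrating the outer $\widetilde{\bb}\cdot\nabla$ by parts, becomes $+\sum_l\|\Lambda^{l-1}(\widetilde{\bb}\cdot\nabla\bv)\|_{L^2}^2\le|\widetilde{\bb}|^2\|\bv\|_{H^s}^2$. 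The pairing of $\widetilde{\bb}\cdot\nabla\bb$ with $\Lambda^{2l-2}\bv$ coming from the damping term $-\bv$ in $\partial_t\bv$ is, by Cauchy--Schwarz and a weighted Young's inequality, bounded by $\varepsilon_0\|\Lambda^{-1}(\widetilde{\bb}\cdot\nabla\bb)\|_{H^s}^2+C\|\bv\|_{H^s}^2$ for any $\varepsilon_0<1$ (using $\|\Lambda^{l-1}\bv\|_{L^2}\le\|\Lambda^{l}\bv\|_{L^2}$, valid since $|\bj|\ge1$). Tuning the Young weights so that a fixed positive fraction $\delta_0>0$ of the good term survives, the linear group is $\le(1+|\widetilde{\bb}|^2)\|\bv\|_{H^s}^2-\delta_0\|\Lambda^{-1}(\widetilde{\bb}\cdot\nabla\bb)\|_{H^s}^2$.

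The nonlinear group consists of $\int\big(\widetilde{\bb}\cdot\nabla(\bv\cdot\nabla\bb-\bb\cdot\nabla\bv)\big)\cdot\Lambda^{2l-2}\bv\,d\bx$ and $\int(\widetilde{\bb}\cdot\nabla\bb)\cdot\Lambda^{2l-2}(\bv\cdot\nabla\bv-\bb\cdot\nabla\bb)\,d\bx$. For the first I would \emph{first} move the outer $\widetilde{\bb}\cdot\nabla$ onto $\Lambda^{2l-2}\bv$ — without this, up to $m+1$ derivatives would land on $\bb$ when $l=s=m$ — reducing it to $-\int\Lambda^{l-1}(\bv\cdot\nabla\bb-\bb\cdot\nabla\bv)\cdot\Lambda^{l-1}(\widetilde{\bb}\cdot\nabla\bv)\,d\bx$; then, writing the quadratic terms in divergence form (e.g.\ $\bv\cdot\nabla\bb=\nabla\cdot(\bv\otimes\bb)$ using $\operatorname{div}\bv=0$) and applying the product estimate of Lemma~\ref{jiaohuanzi}, the Sobolev embedding $\|\bb\|_{L^\infty}+\|\nabla\bb\|_{L^\infty}\le C\|\bb\|_{H^{m-1-r}}$ (valid because $m>2+r+\tfrac n2$, so $m-1-r>1+\tfrac n2$), and the elementary bound $\|\bv\|_{L^\infty}\le C\|\nabla\bv\|_{L^\infty}$ for mean-free functions on $\mathbb{T}^n$ (applicable to $\bv$ by Lemma~\ref{lem:mean}), one arranges each top-order $\bb$-factor to be multiplied by $\|\nabla\bv\|_{L^\infty}$ and each $\|\bb\|_{H^{m-1-r}}$-factor to multiply only $\bv$-norms; after summing in $l$ and a final Young's inequality this gives $\le C\big(\|\nabla\bv\|_{L^\infty}+\|\bb\|_{H^{m-1-r}}^2\big)\|(\bv,\bb)\|_{H^s}^2$ plus a harmless extra multiple of $\|\bv\|_{H^s}^2$ absorbed into the $(1+|\widetilde{\bb}|^2)\|\bv\|_{H^s}^2$ term by the weight-tuning above. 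For the second kind, I would estimate the $\bv\cdot\nabla\bv$-piece directly via $\|\Lambda^{l-1}(\widetilde{\bb}\cdot\nabla\bb)\|_{L^2}\le|\widetilde{\bb}|\,\|\bb\|_{\dot H^l}$ together with $\|\bv\cdot\nabla\bv\|_{\dot H^{l-1}}\le C\|\nabla\bv\|_{L^\infty}\|\bv\|_{\dot H^l}$, obtaining $\le C\|\nabla\bv\|_{L^\infty}\|(\bv,\bb)\|_{H^s}^2$, while the $\bb\cdot\nabla\bb$-piece I would instead split by Young's inequality against the surviving fraction $\delta_0$ of the good term, leaving only $C\|\bb\cdot\nabla\bb\|_{H^{s-1}}^2\le C\|\bb\|_{H^{m-1-r}}^2\|\bb\|_{H^s}^2$. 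Collecting all contributions, choosing $\delta_0$ so that $\tfrac12$ of the good term remains, yields \eqref{2.333}.

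I expect the main obstacle to be the bookkeeping in the nonlinear step: one must verify that \emph{no} nonlinear term ever produces a bare $\|\bb\|_{H^s}^2$ carrying only a small constant, since $\bb$ enjoys no $H^s$-level dissipation — the only control on high-regularity $\bb$ produced by the argument is $\|\bv\|_{H^s}$ and the weaker $\|\bb\|_{H^{m-1-r}}$ extracted from $\|\Lambda^{-1}(\widetilde{\bb}\cdot\nabla\bb)\|_{H^s}$ via Lemma~\ref{2.1}. This is precisely what dictates the order of operations (integrating $\widetilde{\bb}\cdot\nabla$ by parts \emph{before} expanding the product in the $\bv\cdot\nabla\bb-\bb\cdot\nabla\bv$ term), which factor receives the $L^\infty$-norm in each product estimate, which cross-term is absorbed into the dissipation rather than estimated outright, and the regularity threshold $m>2+r+\tfrac n2$ under which the low-order factors embed into $W^{1,\infty}$.
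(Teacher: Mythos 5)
Your proposal is correct and follows essentially the same route as the paper's proof: differentiate the cross functional, insert the two evolution equations, extract the full dissipation term from the pairing of $\widetilde{\bb}\cdot\nabla\bb$ in $\partial_t\bv$ against itself, bound the $\widetilde{\bb}\cdot\nabla(\widetilde{\bb}\cdot\nabla\bv)$ term by $|\widetilde{\bb}|^2\|\bv\|_{H^s}^2$, absorb the damping cross term $\int(\widetilde{\bb}\cdot\nabla\bb)\cdot\Lambda^{2l-2}\bv$ by a weighted Young's inequality against a fraction of the dissipation, and control the nonlinear errors via Kato--Ponce/product estimates plus the Sobolev embedding $H^{m-1-r}\hookrightarrow W^{1,\infty}$. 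The only cosmetic differences from the paper are that (i) the paper applies the Leray projection $\mathbb{P}$ to the velocity equation from the outset, whereas you keep $\nabla p$ and observe it pairs to zero against the divergence-free $\Lambda^{2l-2}(\widetilde{\bb}\cdot\nabla\bb)$ — equivalent; and (ii) for the nonlinear term in $\partial_t\bb$ you integrate the outer $\widetilde{\bb}\cdot\nabla$ by parts onto $\Lambda^{2l-2}\bv$ before estimating, whereas the paper instead splits $\Lambda^{2l-2}$ asymmetrically as $\Lambda^{l-2}\cdot\Lambda^{l}$ and keeps $\widetilde{\bb}\cdot\nabla$ on the nonlinear factor — both count $l-1$ derivatives on the nonlinearity and $l$ on $\bv$, so the two are interchangeable. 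You should also be explicit that at $l=0$ the Kato--Ponce lemma does not directly apply to $\Lambda^{-1}$, which is why the divergence-form rewriting $\bv\cdot\nabla\bb=\nabla\cdot(\bv\otimes\bb)$ followed by $L^2$-boundedness of the Riesz transform is needed there; the paper treats $l=0$ as a separate case for exactly this reason.
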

\begin{proof}
Straightforward calculations give rise to
\begin{align*}
-\frac{d}{dt} \int_{\mathbb{T}^n} (\widetilde{\bb} \cdot \nabla \bb) \cdot \Lambda^{2l-2} \bv \, d\bx &= -\int_{\mathbb{T}^n} (\widetilde{\bb} \cdot \nabla \partial_t \bb) \cdot \Lambda^{2l-2} \bv \, d\bx - \int_{\mathbb{T}^n} (\widetilde{\bb} \cdot \nabla \bb) \cdot \Lambda^{2l-2} \partial_t \bv \, d\bx\\[1ex]
&=: \mathrm{II}_1 + \mathrm{II}_2.
\end{align*}
To proceed, we first focus on estimating \( \mathrm{II}_1 \). Due to its complexity, \( \mathrm{II}_1 \) is further divided into two parts, \( \mathrm{II}_{11} \) and \( \mathrm{II}_{12} \). After completing these estimates, we then turn our attention to \( \mathrm{II}_2 \).

\smallskip
\noindent
\textbf{Estimate of \(\mathrm{II}_1\):} Using $\eqref{equation}_2$, we rewrite it as
\begin{align*}
\mathrm{II}_1&=-\int_{\mathbb{T}^n} (\widetilde{\bb} \cdot \nabla (\widetilde{\bb} \cdot \nabla \bv + \bb \cdot \nabla \bv - \bv\cdot \nabla \bb)) \cdot \Lambda^{2l-2} \bv \, d\bx\\[1ex]
&=-\int_{\mathbb{T}^n} (\widetilde{\bb} \cdot \nabla) (\widetilde{\bb} \cdot \nabla \bv) \cdot \Lambda^{2l-2} \bv \, d\bx - \int_{\mathbb{T}^n} \widetilde{\bb} \cdot \nabla (\bb \cdot \nabla \bv - \bv \cdot \nabla \bb) \cdot \Lambda^{2l-2} \bv \, d\bx\\[1ex]
&=:\mathrm{II}_{11}+\mathrm{II}_{12}.
\end{align*}
For $\mathrm{II}_{11}$, thanks to  the fact
\[
(\widetilde{\bb} \cdot \nabla)\Lambda^{2l-2} = \Lambda^{2l-2}(\widetilde{\bb} \cdot \nabla) \quad \text{for all } l \in \mathbb{R},
\]
and Plancherel's theorem, we have
\begin{align*}
\mathrm{II}_{11} &= -\int_{\mathbb{T}^n} (\widetilde{\bb} \cdot \nabla) (\widetilde{\bb} \cdot \nabla \bv) \cdot \Lambda^{2l-2} \bv \, d\bx = \int_{\mathbb{T}^n} (\widetilde{\bb} \cdot \nabla \bv) \cdot (\widetilde{\bb} \cdot \nabla \Lambda^{2l-2} \bv) \, d\bx\\[1ex]
&= \int_{\mathbb{T}^n} (\widetilde{\bb} \cdot \nabla \bv) \cdot \Lambda^{2l-2}(\widetilde{\bb} \cdot \nabla \bv) \, d\bx = \lVert \Lambda^{l-1}(\widetilde{\bb} \cdot \nabla \bv) \rVert_{L^2}^2\\[1ex]
& \leq |\widetilde{\bb}|^2 \lVert \Lambda^{l} \bv \rVert_{L^2}^2.
\end{align*}
To deal with \( \mathrm{II}_{12} \), we consider the case \( l \geq 1 \) and the case \( l = 0 \), respectively.

\smallskip
\noindent
{\textbf{Case $l \geq 1$.}} Applying integration by parts, H\"{o}lder inequality, Lemma \ref{jiaohuanzi}, the Sobolev embedding \eqref{2.55} and Young inequality, we infer
\begin{align*}
\mathrm{II}_{12}
&= - \int_{\mathbb{T}^n} (\widetilde{\bb} \cdot \nabla)(\bb \cdot \nabla \bv - \bv \cdot \nabla \bb) \cdot \Lambda^{2l-2} \bv \, d\bx \\[1ex]
&= - \int_{\mathbb{T}^n} (\widetilde{\bb} \cdot \nabla)(\bb \cdot \nabla \bv - \bv \cdot \nabla \bb) \cdot \Lambda^{l-2}(\Lambda^{l} \bv) \, d\bx \\[1ex]
&\le C \|\widetilde{\bb} \cdot \nabla \Lambda^{l-2}(\bb \cdot \nabla \bv - \bv \cdot \nabla \bb)\|_{L^2} \|\Lambda^{l} \bv\|_{L^2} \\[1ex]
&\le C(|\widetilde{\bb}|) \big( \|\Lambda^{l-1} \bb\|_{L^2} \|\nabla \bv\|_{L^\infty} + \|\bb\|_{L^\infty} \|\Lambda^{l} \bv\|_{L^2} \big) \|\Lambda^{l} \bv\|_{L^2} \\[1ex]
&\quad + C(|\widetilde{\bb}|) \big( \|\Lambda^{l-1} \bv\|_{L^2} \|\nabla \bb\|_{L^\infty} + \|\bv\|_{L^\infty} \|\Lambda^{l} b\|_{L^2} \big) \|\Lambda^{l} \bv\|_{L^2} \\[1ex]
&\le C(|\widetilde{\bb}|) \|\nabla \bv\|_{L^\infty} \big( \|\Lambda^{l} \bv\|_{L^2}^2 + \|\Lambda^{l} \bb\|_{L^2}^2 \big)
+ \tfrac{1}{4} \|\Lambda^{l} \bv\|_{L^2}^2
+ C \|\bb\|_{H^{m - 1 - r}}^2 \|\Lambda^{l} \bv\|_{L^2}^2,
\end{align*}
where, in the last line, we have applied \eqref{meanzero2}, Plancherel's theorem  and the Fourier multiplier property \(|\bj|^{l-1}\leq |\bj|^{l}\) for all \(|\bj| \geq 1\), giving rise to the facts \(\lVert \Lambda^{l-1} \bv \rVert_{L^2} \leq \lVert \Lambda^{l} \bv \rVert_{L^2}\) and \(\lVert \Lambda^{l-1} \bb \rVert_{L^2} \leq \lVert \Lambda^{l} \bb \rVert_{L^2}\).

\smallskip
\noindent
{\textbf{Case $l = 0$.}} In this case, the term $\Lambda^{-2}\bv$ involving  a negative-order Sobolev norm, leads to Lemma \ref{jiaohuanzi} inapplicable. To solve this, we instead resort to the $L^2$-boundedness of the Riesz transforms $R_i = \partial_i \Lambda^{-1}$.
\begin{align*}
\mathrm{II}_{12}
&= - \int_{\mathbb{T}^n} (\widetilde{\bb} \cdot \nabla)(\bb \cdot \nabla \bv-\bv \cdot \nabla \bb) \cdot \Lambda^{-2} \bv \, d\bx \\[1ex]
&= - \int_{\mathbb{T}^n} \widetilde{b}_i \partial_i \Lambda^{-1}(b_j \partial_j v_k-v_j \partial_j b_k) \cdot \Lambda^{-1} v_k \, d\bx \\[1ex]
&= \int_{\mathbb{T}^n} \widetilde{b}_i R_i(b_j \partial_j v_k-v_j \partial_j b_k) \cdot \Lambda^{-1} v_k \, d\bx \\[1ex]
&\leq C \| \tilde{b} \|_{L^\infty}\left(\| R_i(b_j \partial_j v_k) \|_{L^2}+\| R_i (v_j \partial_j b_k) \|_{L^2}\right) \| \Lambda^{-1} v_k \|_{L^2} \\[1ex]
&\leq C(|\widetilde{\bb}|) \| \bb \cdot \nabla \bv \|_{L^2} \| \Lambda^{-1} \bv \|_{L^2}+C(|\widetilde{\bb}|)\|\bv \cdot \nabla \bb\|_{L^2}\| \Lambda^{-1} \bv\|_{L^2}\\[1ex]
&\leq C(|\widetilde{\bb}|) \| \nabla \bv \|_{L^\infty} \| \bb \|_{L^2} \| \bv \|_{L^2}+C(|\widetilde{\bb}|)\|\nabla \bb\|_{L^\infty}\| \bv\|_{L^2}\| \bv\|_{L^2} \\[1ex]
&\leq C(|\widetilde{\bb}|) \left\| \nabla \bv \right\|_{L^\infty} \left( \left\| \bv \right\|_{L^2}^2 + \left\| \bb \right\|_{L^2}^2 \right)+ \frac{1}{4}\left\| \bv \right\|_{L^2}^2 +  C\left\| \bb \right\|_{H^{m - 1 - r}}^2 \left\| \bv \right\|_{L^2}^2.
\end{align*}
Thus, for any $l\geq0$,
\begin{align}\label{8.1}
|\mathrm{II}_1|&\leq \left(|\widetilde{\bb}|^2+\frac{1}{4}\right) \lVert \Lambda^{l} \bv \rVert_{L^2}^2+ C\left(\left\| \nabla \bv \right\|_{L^\infty}+\left\| \bb \right\|_{H^{m - 1 - r}}^2 \right)  \left( \| \Lambda^{l} \bv\|_{L^2}^2 +\| \Lambda^{l} \bb \|_{L^2}^2 \right).
\end{align}

\noindent
\textbf{Estimate of \(\mathrm{II}_2\):} Considering the first equation in \eqref{1.41}, we integrate by parts to get
\begin{align*}
\mathrm{II}_2 &= -\int_{\mathbb{T}^n} (\widetilde{\bb} \cdot \nabla \bb) \cdot \Lambda^{2l-2}\left( \widetilde{\bb} \cdot \nabla \bb + \bv - \mathbb{P}(\bv \cdot \nabla \bv - \bb \cdot \nabla \bb) \right) \,d\bx \\[1ex]
&= -\lVert \Lambda^{l-1}(\widetilde{\bb} \cdot \nabla \bb) \rVert_{L^2}^2 - \int_{\mathbb{T}^n} (\widetilde{\bb} \cdot \nabla \bb) \cdot \Lambda^{2l-2} \bv \,d\bx\\[1ex]
 &\quad+ \int_{\mathbb{T}^n} (\widetilde{\bb} \cdot \nabla \bb) \cdot \Lambda^{2l-2} \mathbb{P}(\bv \cdot \nabla \bv - \bb \cdot \nabla \bb) \,d\bx \\[1ex]
&=: \mathrm{II}_{21} + \mathrm{II}_{22} + \mathrm{II}_{23}.
\end{align*}
For $\mathrm{II}_{22}$, thanks to H\"{o}lder and Young inequalities, one has
\begin{align*}
|\mathrm{II}_{22}|
&= \left| \int_{\mathbb{T}^n} (\widetilde{\bb} \cdot \nabla \bb) \cdot \Lambda^{2l-2} \bv\, d\bx \right| \\[1ex]
&\leq \lVert \Lambda^{l-1}(\widetilde{\bb} \cdot \nabla \bb) \rVert_{L^2} \,
      \lVert \Lambda^{l-1} \bv \rVert_{L^2} \\[1ex]
&\leq \varepsilon \lVert \Lambda^{l} \bv \rVert_{L^2}^2
   + \frac{1}{4\varepsilon} \lVert \Lambda^{l-1}(\widetilde{\bb} \cdot \nabla \bb) \rVert_{L^2}^2.
\end{align*}
Selecting $\varepsilon =\frac{3}{4}$ gives
\begin{align*}
|\mathrm{II}_{22}|
&\leq \frac{3}{4}\lVert \Lambda^{l} \bv \rVert_{L^2}^2
   + \frac{1}{3} \lVert \Lambda^{l-1}(\widetilde{\bb} \cdot \nabla \bb) \rVert_{L^2}^2.
\end{align*}
For $\mathrm{II}_{23}$, similar to $\mathrm{II}_{12}$, we split the cases $l \geq 1$  and $l = 0$.

\smallskip
\noindent
{\textbf{Case $l \geq 1$.}}
 Making use of the boundedness of the Leray projection \( \mathbb{P} \) on \( L^2 \), Lemma~\ref{jiaohuanzi}, the Sobolev embedding~\eqref{2.55}, Poincar\'e and Young inequalities, one deduces
\begin{align*}
|\mathrm{II}_{23}| &\leq \lVert \Lambda^{l-1}(\widetilde{\bb} \cdot \nabla \bb) \rVert_{L^2} \lVert \Lambda^{l-1}(\bv \cdot \nabla \bv - \bb \cdot \nabla \bb) \rVert_{L^2} \\[1ex]
&\leq C \lVert \Lambda^{l-1}(\widetilde{\bb} \cdot \nabla \bb) \rVert_{L^2} \left( \lVert \Lambda^{l} \bv \rVert_{L^2} \lVert \nabla \bv \rVert_{L^\infty} + \lVert \Lambda^{l} \bb \rVert_{L^2} \lVert \bb \rVert_{H^{m - 1 - r}} \right)\\[1ex]
&\leq C(|\widetilde{\bb}|)\lVert \Lambda^{l} \bb \rVert_{L^2} \lVert \Lambda^{l} \bv \rVert_{L^2}\lVert \nabla \bv \rVert_{L^{\infty}}+C\lVert \Lambda^{l-1} (\widetilde{\bb} \cdot \nabla \bb) \rVert_{L^2}\lVert \Lambda^{l} \bb \rVert_{L^2}\lVert \bb \rVert_{H^{m-1-r}}\\[1ex]
&\leq C(|\widetilde{\bb}|)\lVert \nabla \bv \rVert_{L^{\infty}}\left(\lVert \Lambda^{l} \bv \rVert^2_{L^2} +\lVert \Lambda^{l} \bb \rVert^2_{L^2}\right)+\frac{1}{6}\lVert \Lambda^{l-1} (\widetilde{\bb} \cdot \nabla \bb) \rVert_{L^2}^2 +  C\left\| \bb \right\|_{H^{m - 1 - r}}^2\|\Lambda^{l} \bb\|_{L^2}^2.
\end{align*}

\smallskip
\noindent
{\textbf{Case $l =0$.}}
In this case, we resort to direct \( L^2 \) estimates,
\begin{align*}
\mathrm{II}_{23}&= \int_{\mathbb{T}^n} (\widetilde{\bb} \cdot \nabla \bb) \cdot \Lambda^{-2} \mathbb{P}(\bv \cdot \nabla \bv - \bb \cdot \nabla \bb)\, d\bx\\[1ex]
&\leq \lVert \Lambda^{-2}(\widetilde{\bb} \cdot \nabla \bb) \rVert_{L^2}\left(\lVert \bv \cdot \nabla \bv \rVert_{L^2}+\lVert \bb \cdot \nabla \bb \rVert_{L^2}\right)  \\[1ex]
&\leq C \lVert \Lambda^{-1}(\widetilde{\bb} \cdot \nabla \bb) \rVert_{L^2} \left(\lVert \nabla \bv \rVert_{L^\infty}\lVert\bv \rVert_{L^2}+\lVert \nabla \bb \rVert_{L^\infty}\lVert \bb \rVert_{L^2}  \right)  \\[1ex]
&\leq C(|\widetilde{\bb}|) \lVert \bb \rVert_{L^2} \lVert \bv \rVert_{L^2}\lVert  \nabla \bv \rVert_{L^\infty}+C \lVert \Lambda^{-1}(\widetilde{\bb} \cdot \nabla \bb)\rVert_{L^2} \|\bb\|_{H^{m-1-r}} \lVert \bb \rVert_{L^2}\\[1ex]
&\leq C(|\widetilde{\bb}|)\lVert \nabla \bv \rVert_{L^{\infty}}\left(\lVert  \bv \rVert^2_{L^2} +\lVert \bb \rVert^2_{L^2}\right) +\frac{1}{6}\lVert  \Lambda^{-1} (\widetilde{\bb} \cdot \nabla \bb) \rVert_{L^2}^2 +  C\left\| \bb \right\|_{H^{m - 1 - r}}^2 \left\| \bb  \right\|_{L^2}^2.
\end{align*}
We conclude from the estimates of $\mathrm{II}_{21}, \mathrm{II}_{22}$ and $\mathrm{II}_{23}$ that,  for any $l\geq0$,
\begin{align}\label{8.2}
|\mathrm{II}_2|&\leq -\frac{1}{2} \lVert \Lambda^{l-1}(\widetilde{\bb} \cdot \nabla \bb) \rVert_{L^2}^2 +\frac{3}{4} \lVert \Lambda^{l+1} \bv \rVert_{L^2}^2\notag\\[1ex]
&\quad+ C\left(\lVert \nabla \bv \rVert_{L^{\infty}}+\left\| \bb \right\|_{H^{m - 1 - r}}^2\right)\left(\lVert \Lambda^{l} \bv \rVert^2_{L^2} +\lVert \Lambda^{l} \bb \rVert^2_{L^2}\right).
\end{align}
Therefore, from \eqref{8.1}, \eqref{8.2} and  the assumption \( m > 2 + r + \frac{n}{2} \), we infer
\begin{align*}
- \frac{d}{dt} \sum_{l = 0}^{s} \int_{\mathbb{T}^n} (\widetilde{\bb} \cdot \nabla \bb) \cdot \Lambda^{2l-2} \bv \, d\bx
\leq{}& \left( 1+ |\widetilde{\bb}|^2 \right) \left\lVert \bv \right\rVert_{H^s}^2
- \frac{1}{2} \left\lVert \Lambda^{-1} (\widetilde{\bb} \cdot \nabla \bb) \right\rVert_{H^s}^2 \\[1ex]
& + C \left\lVert (\bv, \bb) \right\rVert_{H^s}^2 \left( \left\lVert \nabla \bv \right\rVert_{L^{\infty}} + \left\lVert \bb \right\rVert_{H^{m-1-r}}^2 \right).
\end{align*}
So we finish the proof of Proposition~\ref{prop4.3}.
\end{proof}

With Propositions~\ref{prop4.2} and \ref{prop4.3} at hand, we proceed to get the energy estimates.
Set the energy functional as
\begin{align*}
E_m^2(T) :=  \sup_{t \in [0, T]} \lVert (\bv, \bb)(t) \rVert_{H^m}^2
+ \int_0^T \lVert \bv(t) \rVert_{H^m}^2 \, dt
+ \int_0^T \lVert \Lambda^{-1}(\widetilde{\bb} \cdot \nabla \bb)(t) \rVert_{H^m}^2 \, dt.
\end{align*}
Then, the following estimate holds.
\begin{prop}\label{prop3.211a}
Let \( n \in \mathbb{N} \) with \( n \geq 2 \), \(r>n-1\), and \( m \in \mathbb{N} \) fulfilling \( m > 2 + r + \frac{n}{2} \).
Suppose \( (\bv, \bb) \) is a smooth global solution to \eqref{equation}.
Then there exists a constant \( C > 0 \) such that
\begin{align}\label{12.24}
E_m^2(T) \leq C(|\widetilde{\bb}|) \lVert (\bv_0, \bb_0) \rVert_{H^m}^2
+ C E_m^4(T)
+ C E_m^2(T) \int_0^T \lVert \nabla \bv(t) \rVert_{L^{\infty}} \, dt,
\end{align}
for any \( T > 0 \).
\end{prop}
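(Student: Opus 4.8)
The plan is to combine the two differential inequalities from Propositions~\ref{prop4.2} and \ref{prop4.3} into a single estimate for the modified energy $Q_m(t)$, integrate in time, and then absorb the problematic terms using the definition of $E_m(T)$. First I would fix $s=m$ and add \eqref{2.333} to $a$ times \eqref{2.33'} (with the factor $\tfrac12$ cleared). The key point is the choice of the constant $a$: the term $-\tfrac12\|\Lambda^{-1}(\widetilde\bb\cdot\nabla\bb)\|_{H^m}^2$ coming from Proposition~\ref{prop4.3} is the only source of dissipation on $\bb$, so it must survive, while the $\big(1+|\widetilde\bb|^2\big)\|\bv\|_{H^m}^2$ term (also from Proposition~\ref{prop4.3}) must be dominated by $a\|\bv\|_{H^m}^2$ from Proposition~\ref{prop4.2}; thus one picks $a > 1+|\widetilde\bb|^2$, say $a = 2+|\widetilde\bb|^2$, which leaves a coercive residual $\|\bv\|_{H^m}^2$ on the left. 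This yields
\[
\frac{d}{dt}Q_m(t) + c_0\Big(\|\bv\|_{H^m}^2 + \|\Lambda^{-1}(\widetilde\bb\cdot\nabla\bb)\|_{H^m}^2\Big)
\le C\big(\|\nabla\bv\|_{L^\infty} + \|\bb\|_{H^{m-1-r}}^2\big)\|(\bv,\bb)\|_{H^m}^2
\]
for some $c_0>0$.

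Next I would establish the equivalence $Q_m(t) \sim \|(\bv,\bb)(t)\|_{H^m}^2$. The cross term $\sum_{l=0}^m\int (\widetilde\bb\cdot\nabla\bb)\cdot\Lambda^{2l-2}\bv\,d\bx$ is bounded by $C(|\widetilde\bb|)\|\bb\|_{H^m}\|\bv\|_{H^m}$ (using $\|\Lambda^{l-1}(\widetilde\bb\cdot\nabla\bb)\|_{L^2}\le|\widetilde\bb|\,\|\Lambda^l\bb\|_{L^2}$ together with the mean-zero property from Lemma~\ref{lem:mean} to handle the $l=0$ negative-order piece via Riesz transforms, exactly as in the proof of Proposition~\ref{prop4.3}), so after possibly enlarging $a$ one gets $\tfrac12\|(\bv,\bb)\|_{H^m}^2 \le \tfrac1{a}Q_m(t) \le 2\|(\bv,\bb)\|_{H^m}^2$, with constants depending only on $|\widetilde\bb|$. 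Integrating the differential inequality from $0$ to $T$ and using this equivalence gives
\[
\sup_{t\in[0,T]}\|(\bv,\bb)(t)\|_{H^m}^2 + \int_0^T\!\Big(\|\bv\|_{H^m}^2 + \|\Lambda^{-1}(\widetilde\bb\cdot\nabla\bb)\|_{H^m}^2\Big)dt
\le C(|\widetilde\bb|)\|(\bv_0,\bb_0)\|_{H^m}^2 + C\int_0^T\!\big(\|\nabla\bv\|_{L^\infty}+\|\bb\|_{H^{m-1-r}}^2\big)\|(\bv,\bb)\|_{H^m}^2\,dt,
\]
which is precisely $E_m^2(T)$ on the left after recalling its definition.

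Finally I would bound the nonlinear remainder. For the $\|\bb\|_{H^{m-1-r}}^2$-factor: since $m-1-r < m$ and $\widetilde\bb$ is Diophantine, the Poincaré-type inequality of Lemma~\ref{2.1} gives $\|\bb\|_{H^{m-1-r}} \le c\,\|\widetilde\bb\cdot\nabla\bb\|_{H^{m-1}} = c\,\|\Lambda^{-1}(\widetilde\bb\cdot\nabla\bb)\|_{H^m}$ up to shifting indices, so $\int_0^T\|\bb\|_{H^{m-1-r}}^2\|(\bv,\bb)\|_{H^m}^2\,dt \le \big(\sup_t\|(\bv,\bb)\|_{H^m}^2\big)\int_0^T\|\Lambda^{-1}(\widetilde\bb\cdot\nabla\bb)\|_{H^m}^2\,dt \le E_m^4(T)$. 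The $\|\nabla\bv\|_{L^\infty}$-factor is simply pulled out: $\int_0^T\|\nabla\bv\|_{L^\infty}\|(\bv,\bb)\|_{H^m}^2\,dt \le \big(\sup_t\|(\bv,\bb)\|_{H^m}^2\big)\int_0^T\|\nabla\bv\|_{L^\infty}\,dt \le E_m^2(T)\int_0^T\|\nabla\bv\|_{L^\infty}\,dt$, which is kept as the last term in \eqref{12.24}. Putting these together yields the claimed inequality. The main obstacle is getting the algebra of constants right so that the coercive $\|\bv\|_{H^m}^2$ and $\|\Lambda^{-1}(\widetilde\bb\cdot\nabla\bb)\|_{H^m}^2$ terms genuinely survive on the left after combining the two propositions — in particular verifying that one can choose a single $a$ making $Q_m$ both coercive (equivalent to the $H^m$ norm) and yielding a good differential inequality; the condition $m > 2+r+\tfrac n2$ is used only through the Sobolev embedding \eqref{2.55} inherited from the two propositions, and $r>n-1$ only enters later for the decay, not here.
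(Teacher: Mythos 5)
Your proposal follows essentially the same route as the paper's proof: combine Propositions~\ref{prop4.2} and \ref{prop4.3} into one differential inequality for $Q_m$, choose $a$ large enough to keep the $\|\bv\|_{H^m}^2$ and $\|\Lambda^{-1}(\widetilde\bb\cdot\nabla\bb)\|_{H^m}^2$ dissipation terms on the left, show $Q_m(t)\sim\|(\bv,\bb)(t)\|_{H^m}^2$ via the bound \eqref{12.21}, integrate in time, and close by pulling out $\sup_t\|(\bv,\bb)\|_{H^m}^2$ and identifying the result with $E_m^2(T)$ and $E_m^4(T)$. Two inessential cosmetic differences: the paper takes the specific $a=1+\tfrac{|\widetilde{\bb}|}{2}+\tfrac{|\widetilde{\bb}|^2}{2}$ rather than your $a=2+|\widetilde{\bb}|^2$ (both choices make the algebra close), and for the $l=0$ term in the coercivity bound \eqref{12.21} the paper simply uses Poincar\'e (mean-zero implies $\|\Lambda^{-1}\bv\|_{L^2}\le\|\bv\|_{L^2}$) rather than invoking Riesz transforms, so your appeal to the Riesz-transform trick is an unnecessary detour there.
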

\begin{proof}
Note that the modified energy functional defined in \eqref{lya}:
\begin{align*}
Q_m(t):= a\lVert (\bv, \bb)(t) \rVert_{H^m}^2 - \sum_{l = 0}^{m} \int_{\mathbb{T}^n} (\widetilde{\bb} \cdot \nabla \bb)(t) \cdot \Lambda^{2l-2} \bv(t) \, d\bx.
\end{align*}
Multiplying \eqref{2.33'} by $a := 1+\tfrac{|\widetilde{\bb}|}{2}+\tfrac{|\widetilde{\bb}|^2}{2}$
and adding $\frac{1}{2}\eqref{2.333}$, we deduce
\begin{align}\label{12.2}
\frac{1}{2} \frac{d}{dt} Q_m(t)
\leq\; & -a \left\lVert \bv \right\rVert_{H^m}^2 - \frac{1}{4} \left\lVert \Lambda^{-1} (\widetilde{\bb} \cdot \nabla \bb) \right\rVert_{H^m}^2+ \left( \frac{1}{2} + \frac{|\widetilde{\bb}|^2}{2} \right) \left\lVert \bv \right\rVert_{H^m}^2 \notag\\
&  + C \lVert (\bv, \bb) \rVert_{H^m}^2 \left( \left\lVert \nabla \bv \right\rVert_{L^{\infty}} + \lVert \bb \rVert_{H^{m-1-r}}^2 \right) \notag\\
\leq\; & -\frac{1}{4} \left( \left\lVert \bv \right\rVert_{H^m}^2 + \left\lVert \Lambda^{-1} (\widetilde{\bb} \cdot \nabla \bb) \right\rVert_{H^m}^2 \right) + C \lVert (\bv, \bb) \rVert_{H^m}^2 \left( \left\lVert \nabla \bv \right\rVert_{L^{\infty}} + \lVert \bb \rVert_{H^{m-1-r}}^2 \right).
\end{align}
In addition,  Lemma~\ref{2.1} and the Poincar\'e inequality  yields
\begin{align*}
\|\bb\|_{H^{m-1-r}}
&\leq c \|\widetilde{\bb} \cdot \nabla \bb\|_{H^{m-1}}
\leq C \|\widetilde{\bb} \cdot \nabla \bb\|_{\dot{H}^{m-1}}\notag\\
& = C \left\|\Lambda^{m-1}(\widetilde{\bb} \cdot \nabla \bb)\right\|_{L^2}= C \left\|\Lambda^{-1}(\widetilde{\bb} \cdot \nabla \bb)\right\|_{\dot{H}^m} \notag\\
&\leq C \left\lVert \Lambda^{-1} (\widetilde{\bb} \cdot \nabla \bb) \right\rVert_{H^m},
\end{align*}
so that \eqref{12.2} is rewritten as
\begin{align}\label{12.233'}
&\frac{d}{dt} Q_m(t) + \frac{1}{2} \left( \left\lVert \bv \right\rVert_{H^m}^2 + \left\lVert \Lambda^{-1} (\widetilde{\bb} \cdot \nabla \bb) \right\rVert_{H^m}^2 \right)\notag\\
&\qquad\leq C \lVert (\bv, \bb) \rVert_{H^m}^2 \left( \left\lVert \nabla \bv \right\rVert_{L^{\infty}} + \left\lVert \Lambda^{-1} (\widetilde{\bb} \cdot \nabla \bb) \right\rVert_{H^m}^2 \right).
\end{align}
Thanks to H\"older, Poincar\'e and Young inequalities, we have
\begin{align*}
\left| \int_{\mathbb{T}^n} (\widetilde{\bb} \cdot \nabla \bb) \cdot \Lambda^{2l-2} \bv \, d\bx \right|
\leq |\widetilde{\bb}| \cdot \lVert \Lambda^l \bb \rVert_{L^2} \cdot \lVert \Lambda^{l - 1} \bv \rVert_{L^2} \leq \frac{|\widetilde{\bb}|}{2} \left( \lVert \Lambda^l \bb \rVert_{L^2}^2 + \lVert \Lambda^l \bv \rVert_{L^2}^2 \right),
\end{align*}
which means
\begin{align}\label{12.21}
\left| \sum_{l = 0}^{m} \int_{\mathbb{T}^n} (\widetilde{\bb} \cdot \nabla \bb) \cdot \Lambda^{2l-2} u \, dx \right|
\leq \frac{|\widetilde{\bb}|}{2} \lVert (\bv, \bb) \rVert_{H^m}^2.
\end{align}
Integrating \eqref{12.233'} over \([0, T]\), and combining with  \eqref{12.21} give rise to \eqref{12.24}.
\end{proof}

\subsection{Global-in-time existence}\label{4/2}
This subsection is devoted to establishing the global existence part of Theorem~\ref{thm}.
And our main task is to control  the critical term
\[
\int_0^T \|\nabla \bv(t)\|_{L^\infty} \, dt,
\]
appearing in the energy inequality \eqref{12.24}.
\begin{prop}\label{prop5.1}
Let \( n \in \mathbb{N} \) with \( n \ge 2 \), and let \( m \in \mathbb{N} \) so that \( m > 2 + \frac{n}{2} \). Suppose  $(\bv, \bb)$ is a smooth global solution to \eqref{equation}. Then there exists a constant $C > 0$ such that for all $T>0$,
\begin{align}\label{prop1.1}
 &\sum_{\bj \in \mathbb{Z}^n \setminus \{\0\}} \int_{0}^{T} |\bj|\left| \widehat{\bv}(t,\bj) \right| \, dt\notag\\[1ex]
 \leq& C\lVert \bv_0 \rVert_{H^m}
 + C\sup_{t \in [0, T]} \lVert \bv \rVert_{H^m}\sum_{\bj \in \mathbb{Z}^n \setminus \{\0\}}\int_{0}^{T} |\bj| |\widehat{\bv}(t,\bj)| \, dt\notag\\[1ex]
 & +C\sup_{t \in [0, T]} \lVert \bb \rVert_{H^m}\sum_{\bj \in \mathbb{Z}^n \setminus \{\0\}} \int_{0}^{T}| \widehat{\bb}(t,\bj)| \, dt+\sum_{\bj \in \mathbb{Z}^n \setminus \{\0\}} \int_{0}^{T} |\bj| |(\widetilde{\bb} \cdot \bj) \widehat{\bb} (t,\bj)| \, dt.
\end{align}
\end{prop}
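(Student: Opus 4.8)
The plan is to work frequency‑by‑frequency on the Fourier side, exploiting that in~\eqref{97} each mode of the velocity satisfies a genuinely scalar damped ODE. Fix $\bj\in\mz^n\setminus\{\0\}$; by~\eqref{97} the coefficient $\widehat{\bv}(t,\bj)$ solves
\[
\partial_t\widehat{\bv}(t,\bj)+\widehat{\bv}(t,\bj)=\ii(\widetilde{\bb}\cdot\bj)\widehat{\bb}(t,\bj)-\widehat{\mathbf{N}}_1(t,\bj),
\qquad
\widehat{\mathbf{N}}_1=\Big(I-\tfrac{\bj\otimes\bj}{|\bj|^2}\Big)\big(\widehat{\bv\cdot\nabla\bv}-\widehat{\bb\cdot\nabla\bb}\big).
\]
Solving by Duhamel and taking absolute values (the propagator is $e^{-(t-\tau)}$, so no kernel analysis is needed here), I would get
\[
|\widehat{\bv}(t,\bj)|\le e^{-t}|\widehat{\bv}_0(\bj)|+\int_0^t e^{-(t-\tau)}\Big(|\widetilde{\bb}\cdot\bj|\,|\widehat{\bb}(\tau,\bj)|+|\widehat{\mathbf{N}}_1(\tau,\bj)|\Big)\dd\tau .
\]
Multiplying by $|\bj|$, integrating over $[0,T]$, and applying Fubini together with $\int_\tau^T e^{-(t-\tau)}\dd t\le1$ to collapse the time convolution, this yields
\[
\sum_{\bj\neq\0}\int_0^T|\bj|\,|\widehat{\bv}(t,\bj)|\dd t
\le \sum_{\bj\neq\0}|\bj|\,|\widehat{\bv}_0(\bj)|
+\sum_{\bj\neq\0}\int_0^T|\bj|\,|(\widetilde{\bb}\cdot\bj)\widehat{\bb}(\tau,\bj)|\dd\tau
+\sum_{\bj\neq\0}\int_0^T|\bj|\,|\widehat{\mathbf{N}}_1(\tau,\bj)|\dd\tau .
\]

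The two easy pieces come next. By Lemma~\ref{lem:mean} the zero mode of $\bv_0$ and $\bb$ vanishes, so all sums run over $\bj\ne\0$ where $|\bj|\ge1$; Cauchy--Schwarz against $\|\bv_0\|_{H^m}$ then bounds the first sum by $C\|\bv_0\|_{H^m}$ because $m>1+\tfrac n2$ makes $\sum_{\bj\ne\0}|\bj|^{2-2m}$ finite. The second sum is already one of the terms on the right‑hand side of~\eqref{prop1.1}, so nothing further is done with it.

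The substantive step is the nonlinear sum. Since $I-\bj\otimes\bj/|\bj|^2$ is an orthogonal projection and, using $\Div\bv=\Div\bb=0$, one has $\bv\cdot\nabla\bv=\Div(\bv\otimes\bv)$ and $\bb\cdot\nabla\bb=\Div(\bb\otimes\bb)$, I would bound $|\bj|\,|\widehat{\mathbf{N}}_1(\bj)|\le|\bj|^2\big(|\widehat{\bv\otimes\bv}(\bj)|+|\widehat{\bb\otimes\bb}(\bj)|\big)$. Expanding the products as convolutions on $\mz^n$ and using $|\bj|^2\le 2(|\bk|^2+|\bj-\bk|^2)$ for $\bj=\bk+(\bj-\bk)$, the double sums factor as
\[
\sum_{\bj\neq\0}|\bj|^2|\widehat{\bv\otimes\bv}(\tau,\bj)|
\le 4\Big(\sum_{\bk\neq\0}|\bk|^2|\widehat{\bv}(\tau,\bk)|\Big)\Big(\sum_{\bk\neq\0}|\widehat{\bv}(\tau,\bk)|\Big),
\]
and likewise for $\bb$. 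Now I would use $|\bk|\ge1$ to absorb $\sum_\bk|\widehat{\bv}(\tau,\bk)|\le\sum_\bk|\bk|\,|\widehat{\bv}(\tau,\bk)|$ and Cauchy--Schwarz with $m>2+\tfrac n2$ to get $\sum_\bk|\bk|^2|\widehat{\bv}(\tau,\bk)|\le C\|\bv(\tau)\|_{H^m}$ (and the same for $\bb$), keeping $\sum_\bk|\bk|\,|\widehat{\bv}|$ in the velocity term and $\sum_\bk|\widehat{\bb}|$ in the magnetic term. This gives
\[
\sum_{\bj\neq\0}|\bj|\,|\widehat{\mathbf{N}}_1(\tau,\bj)|
\le C\|\bv(\tau)\|_{H^m}\sum_{\bk\neq\0}|\bk|\,|\widehat{\bv}(\tau,\bk)|
+C\|\bb(\tau)\|_{H^m}\sum_{\bk\neq\0}|\widehat{\bb}(\tau,\bk)|,
\]
and integrating in $\tau$ while pulling out $\sup_{[0,T]}\|\bv\|_{H^m}$ and $\sup_{[0,T]}\|\bb\|_{H^m}$ produces~\eqref{prop1.1}.

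The main obstacle, as I see it, is purely the bookkeeping in this last splitting: it must be arranged so that the velocity contribution retains exactly $\sum_{\bk}|\bk|\,|\widehat{\bv}|$ — the same quantity the proposition controls, so that the inequality is self‑referential in the way needed to be closed later against the energy bound~\eqref{12.24} — while the magnetic contribution retains $\sum_{\bk}|\widehat{\bb}|$ (no $|\bk|$), matching the term in~\eqref{prop1.1}. Moreover, the hypothesis $m>2+\tfrac n2$ is precisely the threshold under which the weighted $\ell^1$ sums $\sum_{\bk\ne\0}|\bk|^{4-2m}$ converge (equivalently, under which $\|\bv\otimes\bv\|_{H^m}\lesssim\|\bv\|_{H^m}^2$ with one derivative to spare for the extra $|\bj|$), so there is essentially no slack and the factorization above is forced.
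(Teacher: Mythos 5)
Your proof is correct and follows essentially the same route as the paper: Duhamel on the Fourier side of $\eqref{97}_1$, collapse of the time convolution via $\int_\tau^T e^{-(t-\tau)}\,dt\le 1$, Cauchy--Schwarz for the initial-data piece, and a weighted $\ell^1$-convolution estimate for $\widehat{\mathbf{N}}_1$. The one place where you are more explicit than the paper is the weight-splitting $|\bj|^2\le 2(|\bk|^2+|\bj-\bk|^2)$ needed to factor $\sum_{\bj}|\bj|^2|(\widehat\bv*\widehat\bv)(\bj)|$; the paper subsumes this under ``Young inequality for convolutions and H\"older inequality,'' but the step you spell out is exactly what is meant.
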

\begin{proof}
Using Duhamel's principle to $\eqref{97}_1$ gives
\begin{align}\label{le'}
&\sum_{\bj \in \mathbb{Z}^n \setminus \{\0\}} \int_{0}^{T} |\bj| |\widehat{\bv}(t,\bj)| \, dt \leq J_1 + J_2 + J_3 + J_4,
\end{align}
where
\begin{align*}
&J_1:= \sum_{\bj \in \mathbb{Z}^n \setminus \{\0\}} \int_{0}^{T} |\bj| e^{- t} |\widehat{\bv}_0(\bj)| \, dt,\notag\\[1ex]
&J_2:= \sum_{\bj \in \mathbb{Z}^n \setminus \{\0\}} \int_{0}^{T} \int_{0}^{t} |\bj| e^{- (t - \tau)} |\widehat{\bv \cdot \nabla \bv}(\tau,\bj)| \, d\tau dt,\notag\\[1ex]
&J_3 := \sum_{\bj \in \mathbb{Z}^n \setminus \{\0\}} \int_{0}^{T} \int_{0}^{t} |\bj| e^{-(t - \tau)} |\widehat{\bb \cdot \nabla \bb} (\tau,\bj)| \, d\tau dt,\notag\\[1ex]
&J_4:= \sum_{\bj \in \mathbb{Z}^n \setminus \{\0\}} \int_{0}^{T} \int_{0}^{t} |\bj| e^{- (t - \tau)} |(\widetilde{\bb} \cdot \bj) \widehat{\bb} (\tau,\bj)| \, d\tau dt.
\end{align*}
Here we used the fact that $|\widehat{\mathbb{P} f}(\bj)| \leq |\widehat{f}(\bj)|$ when bounding $J_2$ and $J_3$.

Firstly, we deal with the term $J_1$. Noting that $\int_0^T e^{-t}\,\mathrm{d}t \le 1$, then
\[
J_1 \le \sum_{\bj \in \mathbb{Z}^n \setminus \{\0\}} |\bj|\,|\widehat{\bv}_0(\bj)|.
\]
Applying H\"{o}lder inequality and assuming $m > 1 + \frac{n}{2}$, we have
\begin{align*}
\sum_{\bj \in \mathbb{Z}^n \setminus \{\0\}} |\bj|\,|\widehat{\bv}_0(\bj)|
&\le
\Bigg( \sum_{\bj \in \mathbb{Z}^n \setminus \{\0\}} |\bj|^{-2m+2} \Bigg)^{1/2}
\Bigg( \sum_{\bj \in \mathbb{Z}^n \setminus \{\0\}} |\bj|^{2m} |\widehat{\bv}_0(\bj)|^2 \Bigg)^{1/2} \\[0.5ex]
&\le C\,\|\bv_0\|_{\dot{H}^m},
\end{align*}
where $C>0$ depends only on $n$ and $m$.

For $J_4$, we also have
\begin{align*}
J_4 \leq  \sum_{\bj \in \mathbb{Z}^n \setminus \{\0\}} \int_{0}^{T} |\bj| |(\widetilde{\bb} \cdot \bj) \widehat{\bb} (t,\bj)| \, dt.
\end{align*}
For \(J_2\), as $\nabla \cdot \bv = 0$, we  write
\[
|\widehat{\bv \cdot \nabla \bv}(t,\bj)|
= \big|\widehat{\nabla \cdot (\bv \otimes \bv)}(t,\bj)\big|
\leq |\bj|\,\big|(\widehat{\bv} * \widehat{\bv})(t,\bj)\big|,
\quad \bj \in \mathbb{Z}^n \setminus \{\0\}.
\]
Then we get that
\begin{align*}
J_{2} \leq  \sum_{\bj \in \mathbb{Z}^n \setminus \{\0\}} \int_{0}^{T} |\bj|^2\left|(\widehat{\bv}*\widehat{\bv}) (t,\bj)\right| \, d t.
\end{align*}
In what follows, using Young inequality for convolutions and  H\"{o}lder inequality, we obtain for \( m > 2 + \frac{n}{2} \),
\begin{align*}
J_{2}&\leq \int_{0}^{T} \sum_{\bj \in \mathbb{Z}^n \setminus \{\0\}} |\bj|^2|\widehat{\bv} (t,\bj)| \sum_{\bj \in \mathbb{Z}^n \setminus \{\0\}} |\widehat{\bv} (t,\bj)| \, dt\\[1ex]
&\leq  \sup_{t \in [0, T]} \sum_{\bj \in \mathbb{Z}^n \setminus \{\0\}} |\bj|^2|\widehat{\bv} (t,\bj)| \left( \sum_{\bj \in \mathbb{Z}^n \setminus \{\0\}} \int_{0}^{T}|\bj| |\widehat{\bv} (t,\bj)| \, dt \right)\\[1ex]
&\leq  C\sup_{t \in [0, T]} \| \bv(t) \|_{H^m}  \sum_{\bj \in \mathbb{Z}^n \setminus \{\0\}} \int_{0}^{T} |\bj| |\widehat{\bv} (t,\bj)| \, dt,
\end{align*}
here, as \(\bj \ne \0\), the Poincar\'e inequality has been used in the second line.

Similarly, for \(J_3\), it follows for \( m > 2 + \frac{n}{2} \)
 \begin{align*}
J_3 &\leq \sum_{\bj \in \mathbb{Z}^n \setminus \{\0\}} \int_{0}^{T}|\bj|^2 |(\widehat{\bb}*\widehat{\bb}) (t,\bj)| \, d t
\\[1ex]
&\leq C \sup_{t \in [0, T]} \left\lVert \bb  \right\rVert_{H^m}  \sum_{\bj \in \mathbb{Z}^n \setminus \{\0\}} \int_{0}^{T} |\widehat{\bb} (t,\bj)| \, dt.
\end{align*}
Combining the estimates \(J_1\), \(J_2\), \(J_3\) and \(J_4\) gives rise to \eqref{le'}, which finishes the proof of Proposition~\ref{prop5.1}.
\end{proof}

To close the estimate in Proposition~\ref{prop5.1}, it remains to bound the terms
\begin{align*}
 \sum_{\bj \in \mathbb{Z}^n \setminus \{\0\}}\int_0^T |\widehat{\bb}(t,\bj)|\,dt, \quad {\rm and} \, \sum_{\bj \in \mathbb{Z}^n \setminus \{\0\}} \int_{0}^{T} |\bj| |(\widetilde{\bb} \cdot \bj) \widehat{\bb} (t,\bj)| \, dt.
\end{align*}
The following proposition presents the corresponding estimate.
\begin{prop}\label{prop5.2}
Let $n \in \mathbb{N}$ with $n \geq 2$ and $m \in \mathbb{N}$ fulfilling $m > \max\{1 + 2r + \frac{n}{2},\,\, 4+\frac{n}{2}\}$. Suppose that $(\bv, \bb)$ is a smooth global solution to \eqref{equation}. Then there exists a constant $C > 0$ such that for all $T>0$,
\begin{align}\label{prop1.2}
 &\sum_{\bj \in \mathbb{Z}^n \setminus \{\0\}} \int_{0}^{T}| \widehat{\bb}(t,\bj)| \, dt+\sum_{\bj \in \mathbb{Z}^n \setminus \{\0\}} \int_{0}^{T} |\bj| |(\widetilde{\bb} \cdot \bj) \widehat{\bb} (t,\bj)| \, dt\notag\\[1ex]
\leq  & C\lVert(\bv_0, \bb_0) \rVert_{H^m}+C \sup_{t \in [0, T]} \| (\bv, \bb)(t) \|_{H^m} \left( \int_0^T \sum_\bj |\bj| |\widehat{\bv}(t,\bj)| \, dt + \int_0^T \sum_\bj |\widehat{\bb}(t,\bj)| \, dt \right).
\end{align}
\end{prop}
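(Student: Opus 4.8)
The plan is to start from the Fourier representation of $\widehat{\bb}$ furnished by Proposition~\ref{pro1},
\[
|\widehat{\bb}(t,\bj)|\le\left(|\widehat{G}_1(t,\bj)|+|\widehat{G}_3(t,\bj)|\right)|\widehat{\boldsymbol{\psi}}_0(\bj)|
+\int_0^t|\widehat{G}_1(t-\tau,\bj)|\,|\widehat{\mathbf{N}}(\tau,\bj)|\,d\tau
+\int_0^t|\widehat{G}_3(t-\tau,\bj)|\,|\widehat{\mathbf{N}}_2(\tau,\bj)|\,d\tau,
\]
multiply by a weight $\mu(\bj)$ which is $1$ for the first sum in~\eqref{prop1.2} and $|\bj|\,|\widetilde{\bb}\cdot\bj|$ for the second, then integrate over $t\in[0,T]$ and sum over $\bj\in\mz^n\setminus\{\0\}$. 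Since all the kernels are nonnegative, Young's convolution inequality in the time variable splits each Duhamel term into a product of $\int_0^T\mu(\bj)|\widehat{G}_i(s,\bj)|\,ds$ and $\int_0^T|\widehat{\mathbf{N}}(\tau,\bj)|\,d\tau$ (or $\int_0^T|\widehat{\mathbf{N}}_2(\tau,\bj)|\,d\tau$), so the estimate reduces to bounding the time-integrated kernels and then treating the nonlinearity.

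The time-integrated kernel bounds I would extract from the region decomposition $S_1\cup S_2\cup S_3$ in Proposition~\ref{pro2} together with the Diophantine condition~\eqref{Diophantine}: on $S_1$, $|\widehat{G}_1(s,\bj)|\le C|\bj|e^{-s/4}$; on $S_2$, $|\widetilde{\bb}\cdot\bj|$ is bounded and $|\widehat{G}_1(s,\bj)|\le Ce^{-s/8}$; on $S_3$, $|\widehat{G}_1(s,\bj)|\le Ce^{-|\widetilde{\bb}\cdot\bj|^2s}$, so that $\int_0^\infty|\widehat{G}_1(s,\bj)|\,ds\le C|\widetilde{\bb}\cdot\bj|^{-2}\le C|\bj|^{2r}$ by~\eqref{Diophantine}; and $|\widehat{G}_3(s,\bj)|\le e^{-s/2}$ for every $\bj$. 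Collecting the cases I expect, for all $\bj\ne\0$,
\[
\int_0^\infty|\widehat{G}_1(s,\bj)|\,ds\le C|\bj|^{2r},\qquad \int_0^\infty|\widehat{G}_3(s,\bj)|\,ds\le C,
\]
\[
\int_0^\infty|\bj|\,|\widetilde{\bb}\cdot\bj|\,|\widehat{G}_1(s,\bj)|\,ds\le C\left(|\bj|^{3}+|\bj|^{r+1}\right),\qquad \int_0^\infty|\bj|\,|\widetilde{\bb}\cdot\bj|\,|\widehat{G}_3(s,\bj)|\,ds\le C|\bj|^{2},
\]
with $C=C(n,r,\widetilde{\bb})$; here $S_3$ is responsible for the powers $|\bj|^{2r}$ and $|\bj|^{r+1}$, and $S_1$ for the power $|\bj|^{3}$. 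For the initial-data part I then apply Cauchy--Schwarz: $\sum_\bj\mu(\bj)\left(\int_0^T|\widehat{G}_i(s,\bj)|\,ds\right)|\widehat{\boldsymbol{\psi}}_0(\bj)|\le C\sum_\bj|\bj|^{\rho}|\widehat{\boldsymbol{\psi}}_0(\bj)|\le C\|(\bv_0,\bb_0)\|_{H^m}$, where $\rho=2r$ when $\mu\equiv1$ and $\rho=\max\{3,r+1\}$ when $\mu(\bj)=|\bj|\,|\widetilde{\bb}\cdot\bj|$, convergence of $\sum_\bj|\bj|^{2(\rho-m)}$ being ensured by $m>\rho+\tfrac{n}{2}$.

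For the nonlinear contribution I would exploit the divergence-free structure: $|\widehat{\mathbf{N}}_1(\bj)|\le|\widehat{\bv\cdot\nabla\bv}(\bj)|+|\widehat{\bb\cdot\nabla\bb}(\bj)|$ (boundedness of $\mathbb{P}$ at each frequency), $|\widehat{\mathbf{N}}_2(\bj)|\le|\widehat{\bv\cdot\nabla\bb}(\bj)|+|\widehat{\bb\cdot\nabla\bv}(\bj)|$, and $\mathbf{f}\cdot\nabla\mathbf{g}=\nabla\cdot(\mathbf{f}\otimes\mathbf{g})$ whenever $\nabla\cdot\mathbf{f}=0$, so that $|\widehat{\mathbf{f}\cdot\nabla\mathbf{g}}(\bj)|\le C|\bj|\,(|\widehat{\mathbf{f}}|*|\widehat{\mathbf{g}}|)(\bj)$. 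Writing $\widehat{w}:=|\widehat{\bv}|+|\widehat{\bb}|$, both $|\widehat{\mathbf{N}}(\bj)|$ and $|\widehat{\mathbf{N}}_2(\bj)|$ are $\le C|\bj|\,(\widehat{w}*\widehat{w})(\bj)$, hence by Fubini every nonlinear term is dominated by an expression of the form $C\int_0^T\sum_\bj|\bj|^{\rho+1}(\widehat{w}*\widehat{w})(\tau,\bj)\,d\tau$. I would then distribute the power with $|\bj|^{\rho+1}\le C\left(|\bj-\bk|^{\rho+1}+|\bk|^{\rho+1}\right)$ inside the convolution, place the whole power on one factor and bound it by $C\|(\bv,\bb)(\tau)\|_{H^m}$ (valid since $m>\rho+1+\tfrac{n}{2}$), and keep the other factor at minimal order, $\sum_\bk\widehat{w}(\tau,\bk)\le\sum_\bk|\bk|\,|\widehat{\bv}(\tau,\bk)|+\sum_\bk|\widehat{\bb}(\tau,\bk)|$ (using $|\bk|\ge1$); taking the supremum in $t$ on the first factor yields exactly the right-hand side of~\eqref{prop1.2}. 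The largest exponents are $\rho=2r$ from the $S_3$ branch with $\mu\equiv1$ and $\rho=3$ from the $S_1$ branch with $\mu(\bj)=|\bj|\,|\widetilde{\bb}\cdot\bj|$, so $m>\rho+1+\tfrac{n}{2}$ is precisely the hypothesis $m>\max\{1+2r+\tfrac{n}{2},\,4+\tfrac{n}{2}\}$.

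The step I expect to be the main obstacle is the derivative bookkeeping. The Diophantine condition forces the time-integral of $|\widehat{G}_1|$ on $S_3$ to grow like $|\bj|^{2r}$, which is the genuine source of the high regularity $m>1+2r+\tfrac{n}{2}$; any coarser treatment of that region ruins the threshold. Moreover, in the bilinear estimate one must be disciplined about throwing the entire factor $|\bj|^{\rho+1}$ onto a single copy in the convolution, so that it is absorbed into an $H^m$-norm controlled by $\sup_{t}\|(\bv,\bb)\|_{H^m}$, while leaving the other copy at order zero — these are exactly the $\ell^1$-type quantities $\sum_\bk|\bk|\,|\widehat{\bv}|$ and $\sum_\bk|\widehat{\bb}|$ that Proposition~\ref{prop5.1} needs in order to close the bootstrap for global existence.
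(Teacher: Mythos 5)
Your proposal is correct and follows essentially the same route as the paper: the Fourier representation from Proposition~\ref{pro1}, Young's convolution inequality in time, the $S_1\cup S_2\cup S_3$ kernel estimates with the Diophantine bound $|\widetilde{\bb}\cdot\bj|^{-2}\le C|\bj|^{2r}$ on $S_3$, Cauchy--Schwarz for the initial-data contribution, and the bilinear splitting that places the full frequency weight on one convolution factor (absorbed into $\sup_t\|(\bv,\bb)\|_{H^m}$) while keeping the other at order zero. The only slight imprecision is that for $r>2$ the $S_3$ branch of the weighted case yields $\rho=r+1>3$ rather than $\rho=3$, but since $r+2<2r+1$ the unweighted $S_3$ case still dominates and the final threshold $m>\max\{1+2r+\tfrac n2,\,4+\tfrac n2\}$ is unchanged.
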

\begin{proof}
By virtue of $\eqref{2.4}_2$, we arrive at
\begin{align}
&\sum_{\bj \in \mathbb{Z}^n \setminus \{\0\}} \int_{0}^{T} |\widehat{\bb}(t,\bj)| \, dt\notag\\[1ex]
\leq& \sum_{\bj \in \mathbb{Z}^n \setminus \{\0\}} \int_{0}^{T} |\widehat{G}_1||\widehat{\boldsymbol{\psi}}_0|\, dt+\sum_{\bj \in \mathbb{Z}^n \setminus \{\0\}} \int_{0}^{T} \int_{0}^{t}
|\widehat{G}_1(t-\tau)||\widehat{\mathbf{N}}(\tau)| \, d\tau dt\notag\\[1ex]
&+\sum_{\bj \in \mathbb{Z}^n \setminus \{\0\}} \int_{0}^{T} |\widehat{G}_3||\widehat{\boldsymbol{\psi}}_0|  \, dt+\sum_{\bj \in \mathbb{Z}^n \setminus \{\0\}} \int_{0}^{T} \int_{0}^{t}
|\widehat{G}_3(t-\tau)||\widehat{\mathbf{N}}_2(\tau)| \, d\tau dt\notag\\[1ex]
=:&J_5 + J_6 + J_7 + J_8.\label{le''y}
\end{align}
Similarly,
\begin{align}
&\sum_{\bj \in \mathbb{Z}^n \setminus \{\0\}} \int_{0}^{T} |\bj| |(\widetilde{\bb} \cdot \bj) \widehat{\bb} (t,\bj)| \, dt\notag\\[1ex]
\leq& \sum_{\bj \in \mathbb{Z}^n \setminus \{\0\}} \int_{0}^{T} |\bj| |\widetilde{\bb} \cdot \bj||\widehat{G}_1||\widehat{\boldsymbol{\psi}}_0|\, dt+\sum_{\bj \in \mathbb{Z}^n \setminus \{\0\}} \int_{0}^{T} \int_{0}^{t}
|\bj| |\widetilde{\bb} \cdot \bj||\widehat{G}_1(t-\tau)||\widehat{\mathbf{N}}(\tau)| \, d\tau dt\notag\\[1ex]
&+\sum_{\bj \in \mathbb{Z}^n \setminus \{\0\}} \int_{0}^{T} |\bj| |\widetilde{\bb} \cdot \bj||\widehat{G}_3||\widehat{\boldsymbol{\psi}}_0|  \, dt+\sum_{\bj \in \mathbb{Z}^n \setminus \{\0\}} \int_{0}^{T} \int_{0}^{t}
|\bj| |\widetilde{\bb} \cdot \bj||\widehat{G}_3(t-\tau)||\widehat{\mathbf{N}}_2(\tau)| \, d\tau dt\notag\\[1ex]
=:&I_5 + I_6 + I_7 + I_8.\label{le''}
\end{align}
We begin with dealing with \(I_7\) and \(J_7\). Proposition~\ref{pro2} implies
\begin{align*}
J_7+I_7 &\leq \sum_{\bj \in \mathbb{Z}^n \setminus \{\0\}} \int_0^T e^{-\frac{t}{2}} |\widehat{\boldsymbol{\psi}}_0(\bj)| \, dt+C(|\widetilde{\bb}|) \sum_{\bj \in \mathbb{Z}^n \setminus \{\0\}} \int_0^T |\bj|^{2}e^{-\frac{t}{2}} |\widehat{\boldsymbol{\psi}}_0(\bj)| \, dt\\[1ex]
&\leq C \| \boldsymbol{\psi}_0 \|_{H^m},\quad {\rm for}\,\, m > 2 + \frac{n}{2}.
\end{align*}
For \( I_8 \) and \( J_8 \), leveraging  Proposition~\ref{pro2} and proceeding similarly as  \( J_2 \), we deduce for \( m > 2 + \tfrac{n}{2} \),
\begin{align*}
J_8+I_8&\leq C(|\widetilde{\bb}|)\int_0^T \sum_{\bj \in \mathbb{Z}^n \setminus \{\0\}}
|\bj|^{2}|(\widehat{\bb} * \widehat{\bv})(t,\bj)|  \, dt \\[1ex]
&\leq  C \int_0^T \left( \sum_{\bj \in \mathbb{Z}^n \setminus \{\0\}} |\bj|^{2}|\widehat{\bb}(t,\bj)| \right)
\left( \sum_{\bj \in \mathbb{Z}^n \setminus \{\0\}} |\widehat{\bv}(t,\bj)| \right) \,dt \\[1ex]
 &\quad+C \int_0^T \left( \sum_{\bj \in \mathbb{Z}^n \setminus \{\0\}} |\widehat{\bb}(t,\bj)| \right)
\left( \sum_{\bj \in \mathbb{Z}^n \setminus \{\0\}} |\bj|^{2} |\widehat{\bv}(t,\bj)| \right)\, dt\\[1ex]
&\leq C \sup_{t \in [0, T]} \| \bb(t) \|_{H^m} \int_0^T \sum_{\bj \in \mathbb{Z}^n \setminus \{\0\}}  |\widehat{\bv}(t,\bj)| \, dt \\[1ex]
 &\quad+C \sup_{t \in [0, T]} \| \bv(t) \|_{H^m} \int_0^T \sum_{\bj \in \mathbb{Z}^n \setminus \{\0\}}  |\widehat{\bb}(t,\bj)| \, dt
\\[1ex]
&\leq C \sup_{t \in [0, T]} \| (\bv, \bb)(t) \|_{H^m} \left( \int_0^T \sum_{\bj \in \mathbb{Z}^n \setminus \{\0\}} |\bj| |\hat{\bv}(t,\bj)| \, dt + \int_0^T \sum_{\bj \in \mathbb{Z}^n \setminus \{\0\}} |\widehat{\bb}(t,\bj)| \, dt \right).
\end{align*}
In what follows, we shall handle \(J_5\) and \(I_5\), and divide the analysis into three cases: \(\bj \in S_1\), \(\bj \in S_2\), and \(\bj \in S_3\):
\begin{align*}
J_5
&= \sum_{\bj \in S_1} \int_0^T |\widehat{G}_1(t,\bj)|\, |\hat{\boldsymbol{\psi}}_0(\bj)| \, dt
 + \sum_{\bj \in S_2} \int_0^T |\widehat{G}_1(t,\bj)|\, |\hat{\boldsymbol{\psi}}_0(\bj)| \, dt \\
&\quad + \sum_{\bj \in S_3} \int_0^T |\widehat{G}_1(t,\bj)|\, |\hat{\boldsymbol{\psi}}_0(\bj)| \, dt
=: J_{51} + J_{52} + J_{53},
\end{align*}
and
\begin{align*}
I_5
&= \sum_{\bj \in S_1} \int_0^T  |\bj|\, |\widetilde{\bb} \cdot \bj|\, |\widehat{G}_1(t,\bj)|\, |\hat{\boldsymbol{\psi}}_0(\bj)| \, dt
 + \sum_{\bj \in S_2} \int_0^T |\bj|\, |\widetilde{\bb} \cdot \bj|\, |\widehat{G}_1(t,\bj)|\, |\widehat{\boldsymbol{\psi}}_0(\bj)| \, dt \\
&\quad + \sum_{\bj \in S_3} \int_0^T  |\bj|\, |\widetilde{\bb} \cdot \bj|\, |\widehat{G}_1(t,\bj)|\, |\widehat{\boldsymbol{\psi}}_0(\bj)| \, dt
=: I_{51} + I_{52} + I_{53}.
\end{align*}
Through Proposition~\ref{pro2}, we infer
\begin{equation}\label{1.413}
|\widehat{G}_1(t,\bj)| \le
\begin{cases}
C|\bj|e^{-\frac{t}{4}}, & \bj \in S_1, \\[1ex]
Ce^{-\frac{t}{8}}, &  \bj \in S_2, \\[1ex]
Ce^{-|\widetilde{\bb} \cdot \bj|^2 t}, &  \bj \in S_3.
\end{cases}
\end{equation}
Hence,
\begin{align*}
I_{51}+I_{52}&\leq C(|\widetilde{\bb}|) \sum_{\bj \in S_1} \int_0^T |\bj|^3 e^{-\frac{t}{4} } |\widehat{\boldsymbol{\psi}}_0(\bj)| \, dt+C\sum_{\bj \in  S_2} \int_0^T |\bj|e^{-\frac{t}{8}} |\hat{\boldsymbol{\psi}}_0(\bj)| \, dt\\[1ex]
&\leq C \sum_{\bj \in S_1} |\bj|^3|\hat{\boldsymbol{\psi}}_0(\bj)|+ C \sum_{\bj \in S_2} |\bj||\widehat{\boldsymbol{\psi}}_0(\bj)|
\\[1ex]
&\leq C \| \boldsymbol{\psi}_0 \|_{H^m}, \quad {\rm for} \,\,m > 3 + \frac{n}{2}.
\end{align*}
Similarly,
\begin{align*}
J_{51}+J_{52}&\leq C \sum_{\bj \in S_1} \int_0^T |\bj| e^{-\frac{t}{4} } |\widehat{\boldsymbol{\psi}}_0(\bj)| \, dt+C \sum_{\bj \in  S_2} \int_0^T e^{-\frac{t}{8}} |\widehat{\boldsymbol{\psi}}_0(\bj)| \, dt\\[1ex]
&\leq C \sum_{\bj \in S_1} |\bj||\widehat{\boldsymbol{\psi}}_0(\bj)|+ C \sum_{\bj \in S_2} |\widehat{\boldsymbol{\psi}}_0(\bj)|
\\[1ex]
&\leq C \| \boldsymbol{\psi}_0 \|_{H^m}, \quad {\rm for} \,\,m > 1 + \frac{n}{2}.
\end{align*}
For \(I_{53}\), using the Diophantine condition and the fact
\begin{align}\label{2.3}
\int_{0}^{T} q e^{-q t} \, dt \le 1, \qquad \forall\, q \ge 0,\, T \ge 0,
\end{align}
we obtain, for \(m > 1 + r + \tfrac{n}{2}\),
\begin{align*}
I_{53}
&\le C \sum_{\bj \in S_3} \int_0^T |\bj|\, |\widetilde{\bb} \cdot \bj|\, e^{-(\widetilde{\bb} \cdot \bj)^2 t} |\widehat{\boldsymbol{\psi}}_0(\bj)| \, dt \\[0.5ex]
&\le C \sum_{\bj \in S_3} \frac{|\bj|}{|\widetilde{\bb} \cdot \bj|} \int_0^T |\widetilde{\bb} \cdot \bj|^2 e^{-|\widetilde{\bb} \cdot \bj|^2 t} \, dt \, |\widehat{\boldsymbol{\psi}}_0(\bj)| \\[0.5ex]
&\le C \sum_{\bj \in \mathbb{Z}^n \setminus \{\0\}} |\bj|^{1+r} |\widehat{\boldsymbol{\psi}}_0(\bj)|
\le C \| \boldsymbol{\psi}_0 \|_{H^m}.
\end{align*}
For \(J_{53}\), applying \eqref{2.3} yields
\begin{align*}
J_{53}&\leq \sum_{\bj \in S_3} \int_0^T \frac{1}{|\widetilde{\bb}\cdot \bj|^2}(\widetilde{\bb}\cdot \bj)^2 e^{-(\widetilde{\bb}\cdot \bj)^2t}| |\widehat{\boldsymbol{\psi}}_0(\bj)| \, dt \\[1ex]
&\leq  C \sum_{\bj \in S_3} \frac{1}{|\widetilde{\bb} \cdot \bj|^2} \left( \int_0^T {|\widetilde{\bb} \cdot \bj|^2} e^{-{|\widetilde{\bb} \cdot \bj|^2} t}\, dt \right) |\widehat{\boldsymbol{\psi}}_0(\bj)| \\[1ex]
&\leq C \sum_{\bj \in \mathbb{Z}^n \setminus \{\0\}} |\bj|^{2r} |\widehat{\boldsymbol{\psi}}_0(\bj)| \leq C \| \boldsymbol{\psi}_0 \|_{H^m}, \qquad \text{for } m > 2r + \tfrac{n}{2}.
\end{align*}
Finally, to estimate \(I_6\) and \(J_6\), we write
\begin{align*}
J_6
=&\sum_{\bj \in S_1} \int_0^T \int_0^t|\widehat{G}_1(t-\tau,\bj)|\, |\widehat{\mathbf{N}}(\tau,\bj)| \, d\tau dt\\[1ex]
&+\sum_{\bj \in S_2}\int_0^T \int_0^t|\widehat{G}_1(t-\tau,\bj)|\, |\widehat{\mathbf{N}}(\tau,\bj)| \, d\tau dt\\[1ex]
&+\sum_{\bj \in S_3}\int_0^T \int_0^t|\widehat{G}_1(t-\tau,\bj)|\, |\widehat{\mathbf{N}}(\tau,\bj)| \, d\tau dt\\[1ex]
=:&\, J_{61}+J_{62}+J_{63},
\end{align*}
and
\begin{align*}
I_6=&\sum_{\bj \in S_1} \int_0^T \int_0^t |\bj| |\widetilde{\bb} \cdot \bj||\widehat{G}_1(t-\tau,\bj)|\, |\widehat{\mathbf{N}}(\tau,\bj)| \, d\tau dt\\[1ex]
&+\sum_{\bj \in S_2}\int_0^T \int_0^t |\bj| |\widetilde{\bb} \cdot \bj||\widehat{G}_1(t-\tau,\bj)|\, |\widehat{\mathbf{N}}(\tau,\bj)| \, d\tau dt\\[1ex]
&+ \sum_{\bj \in S_3}\int_0^T \int_0^t |\bj| |\widetilde{\bb} \cdot \bj||\widehat{G}_1(t-\tau,\bj)|\, |\widehat{\mathbf{N}}(\tau,\bj)| \, d\tau dt\\[1ex]
=:&I_{61}+I_{62}+I_{63}.
\end{align*}
For \(I_{61}+I_{62}\), when \(m > 4 + \frac{n}{2}\), combining  \eqref{1.413}, \eqref{1.411} and \eqref{1.412},
by Young, Poincar\'e, and H\"older inequalities, we obtain
\begin{align*}
I_{61}+I_{62}
&\leq C \sum_{\bj \in S_1\cup S_2} \int_0^T |\bj|^3 |\widehat{\mathbf{N}}(t,\bj)| \, dt\\[1ex]
&\leq C \int_0^T\sum_{\bj \in \mathbb{Z}^n \setminus \{\0\}} |\bj|^4
\big(|\widehat{\bv}*\widehat{\bv}| + |\widehat{\bb}*\widehat{\bb}| + |\widehat{\bv}*\widehat{\bb}|\big)\, dt\\[1ex]
&\leq C \sup_{t \in [0, T]} \|(\bv,\bb)(t)\|_{H^m}
\left(\int_0^T\sum_{\bj \in \mathbb{Z}^n \setminus \{\0\}} |\bj|\,|\widehat{\bv}(t,\bj)|\,dt
+ \int_0^T\sum_{\bj \in \mathbb{Z}^n \setminus \{\0\}} |\widehat{\bb}(t,\bj)|\,dt\right).
\end{align*}
Similarly, for \(J_{61}+J_{62}\), one gets
\begin{align*}
J_{61}+J_{62}
&\leq C(|\widetilde{\bb}|) \sum_{\bj \in S_1\cup S_2} \int_0^T |\bj|\,|\widehat{\mathbf{N}}(t,\bj)|\, dt\\[1ex]
&\leq C \int_0^T\sum_{\bj \in \mathbb{Z}^n \setminus \{0\}} |\bj|^2
\big(|\widehat{\bv}*\widehat{\bv}| + |\widehat{\bb}*\widehat{\bb}| + |\widehat{\bv}*\widehat{\bb}|\big)\, dt\\[1ex]
&\leq C \sup_{t \in [0, T]} \|(\bv,\bb)(t)\|_{H^m}
\left(\int_0^T\sum_{\bj \in \mathbb{Z}^n \setminus \{\0\}} |\bj|\,|\widehat{\bv}(t,\bj)|\,dt
+ \int_0^T\sum_{\bj \in \mathbb{Z}^n \setminus \{\0\}} |\widehat{\bb}(t,\bj)|\,dt\right),
\end{align*}
for \(m > 2+\frac{n}{2}\). Here we used the Poincar\'e inequality in the last step as \(\bj \ne \0\).

For the high-frequency region \(S_3\), when \(m > 2+r+\frac{n}{2}\), we obtain
\begin{align*}
I_{63}\leq& C \sum_{\bj \in S_3} \int_0^T |\bj|^{1+r} |\widehat{\mathbf{N}}(t,\bj)| \, dt\\[1ex]
\leq& C \int_0^T\sum_{\bj \in S_3} |\bj|^{2+r}\left( |\widehat{\bv} * \widehat{\bv}| + |\widehat{\bb} * \widehat{\bb}| + |\widehat{\bv} * \widehat{\bb}| \right)\, dt
\\[1ex]
\leq& C \sup_{t \in [0, T]} \| (\bv, \bb)(t) \|_{H^m} \left( \int_0^T \sum_{\bj \in \mathbb{Z}^n \setminus \{\0\}} |\bj| |\widehat{\bv}(t,\bj)| \, dt + \int_0^T \sum_{\bj \in \mathbb{Z}^n \setminus \{\0\}} |\widehat{\bb}(t,\bj)| \, dt \right).
\end{align*}
Similarly, for \(m > 1+2r+\frac{n}{2}\), one has
\begin{align*}
J_{63}\leq& C \sum_{\bj \in S_3} \int_0^T |\bj|^{2r} |\widehat{\mathbf{N}}(t,\bj)| \, dt\\[1ex]
\leq& C \int_0^T\sum_{\bj \in S_3} |\bj|^{2r+1}\left( |\widehat{\bv} * \widehat{\bv}| + |\widehat{\bb} * \widehat{\bb}| + |\widehat{\bv} * \widehat{\bb}| \right)\, dt
\\[1ex]
\leq& C \sup_{t \in [0, T]} \| (\bv, \bb)(t) \|_{H^m} \left( \int_0^T \sum_{\bj \in \mathbb{Z}^n \setminus \{\0\}} |\bj| |\widehat{\bv}(t,\bj)| \, dt + \int_0^T \sum_{\bj \in \mathbb{Z}^n \setminus \{\0\}} |\widehat{\bb}(t,\bj)| \, dt \right).
\end{align*}
Combining the above estimates for \(I_5\)-\(I_8\) in \eqref{le''} and for \(J_5\)-\(J_8\) in \eqref{le''y} leads to the result in Proposition~\ref{prop5.2}.
\end{proof}

We are now in a position to prove the global existence part of Theorem~\ref{thm}.
Recall \eqref{12.24} as
\begin{equation}\label{12.24'}
E_m^2(T) \leq C(|\widetilde{\bb}|)\|(\bv_0,\bb_0)\|_{H^m}^2
+ C E_m^4(T)
+ C E_m^2(T) \int_0^T \|\nabla \bv(t)\|_{L^\infty}\,dt.
\end{equation}
To extend the solution globally, it suffices to control the last term on the right-hand side of \eqref{12.24'}, i.e.,
\[
\int_0^T \|\nabla \bv(t)\|_{L^\infty}\,dt.
\]
By applying  the Fourier transformation, we get for all \(t \ge 0\) that
\[
\|\nabla \bv(t)\|_{L^\infty}
\leq C \sum_{\bj\in \mathbb{Z}^n\setminus\{\0\}} |\bj|\,|\widehat{\bv}(t,\bj)|,
\qquad
\| \bb(t)\|_{L^\infty}
\leq C \sum_{\bj\in \mathbb{Z}^n\setminus\{\0\}}|\widehat{\bb}(t,\bj)|.
\]
Hence,
\begin{align*}
\int_0^T \big(\|\nabla \bv(t)\|_{L^\infty} + \|\bb(t)\|_{L^\infty}\big)\,dt
\leq C
\sum_{\bj\in \mathbb{Z}^n\setminus\{\0\}} \int_0^T |\bj|\,|\widehat{\bv}(t,\bj)|\,dt
+ C\sum_{\bj\in \mathbb{Z}^n\setminus\{\0\}} \int_0^T |\widehat{\bb}(t,\bj)|\,dt.
\end{align*}
Propositions~\ref{prop5.1} and \ref{prop5.2} give that
\begin{align}\label{prop-appliedy}
 &\sum_{\bj \in \mathbb{Z}^n \setminus \{0\}} \int_{0}^{T} |\bj|\left| \widehat{\bv}(t,\bj) \right| \, dt
+ \sum_{\bj \in \mathbb{Z}^n \setminus \{0\}} \int_{0}^{T}| \widehat{\bb}(t,\bj)| \, dt\leq C\|(\bv_0,\bb_0)\|_{H^m} \notag\\
&\quad
+ C_1 \sup_{t \in [0,T]} \|(\bv, \bb)(t)\|_{H^m}
\left( \sum_{\bj \in \mathbb{Z}^n \setminus \{0\}}\int_0^T |\bj|\,|\widehat \bv(t,\bj)| \, dt
+ \sum_{\bj \in \mathbb{Z}^n \setminus \{0\}}\int_0^T|\widehat \bb(t,\bj)| \, dt \right),
\end{align}
for some constant \(C_1>0\).

Taking the initial data sufficiently small as in \eqref{smallcondition}, the last term on the right-hand side of \eqref{prop-appliedy} could be absorbed into the left-hand side by a standard bootstrap argument. Then,
\begin{align}\label{Linftyq}
\int_0^T \|\nabla \bv(t)\|_{L^\infty}\,dt
\leq \int_0^T \big(\|\nabla \bv(t)\|_{L^\infty} + \|\bb(t)\|_{L^\infty}\big)\,dt
\leq C\|(\bv_0,\bb_0)\|_{H^m}, \quad \forall\, T>0.
\end{align}
Substituting \eqref{Linftyq} into \eqref{12.24'} and resorting to the smallness assumption \eqref{smallcondition} once again, we know that
\begin{equation}\label{12.24'y}
E_m^2(T) \leq C(|\widetilde{\bb}|)\|(\bv_0,\bb_0)\|_{H^m}^2 + C_2 E_m^4(T),
\end{equation}
for some constant \(C_2>0\).

To close the bootstrap, we assume
\begin{align}\label{(9)}
E_m^2(T) \leq \frac{1}{2C_2}.
\end{align}
Then \eqref{12.24'y} implies
\[
E_m^2(T) \leq C(|\widetilde{\bb}|)E_m^2(0),
\]
where $E_m(0):=\|(\bv_0,\bb_0)\|_{H^m}.$
In addition, if the initial data are sufficiently small such that
\[
E_m^2(0) \leq \frac{1}{4C_2},
\]
then \(E_m^2(T) < \frac{1}{4C_2}\), which is consistent with \eqref{(9)} and therefore ends the bootstrap argument.
This gives rise to
\begin{align}\label{eq:1.2qw}
\sup_{t \in [0, \infty)} \lVert (\bv, \bb)(t) \rVert_{H^m}^2
+ \int_0^\infty \lVert \bv(t) \rVert_{H^m}^2 \,{d}t
+ \int_0^\infty \lVert \bb(t) \rVert_{H^{m-1-r}}^2 \, {d}t
\leq C \lVert (\bv_0, \bb_0) \rVert_{H^m}^2,
\end{align}
which implies the global-in-time existence of smooth solutions and the uniform bound \eqref{eq:1.2} in Theorem \ref{thm}.

\subsection{Temporal decay estimate}\label{4/3}
In this subsection, we verify  the temporal decay estimate presented  in Theorem~\ref{thm}. Let $(\bv, \bb)$ be a smooth global-in-time solution to \eqref{equation}. The proof depends  on a time-weighted energy method applied to the modified energy functional \(Q_s(t)\) defined in \eqref{lya}.

Following the argument in \eqref{2.33}, for any $s \in [0, m]$ with \( m > 2 + r + \frac{n}{2} \), one gets
\begin{align}\label{2.333t}
\frac{1}{2}\frac{d}{dt} \lVert (\bv, \bb) \rVert_{H^s}^2 +\lVert \bv \rVert_{H^s}^2
\leq C \left( \lVert \nabla \bv \rVert_{L^{\infty}} + \lVert \bb \rVert_{H^{m-1-r}}^2 \right) \lVert (\bv, \bb) \rVert_{H^s}^2.
\end{align}
Combining \eqref{2.333t} with \eqref{2.333}, we have
\begin{align}\label{12.2ab}
\frac{1}{2} \frac{d}{dt} Q_s(t)
\leq\; & -a \left\lVert \bv \right\rVert_{H^s}^2 - \frac{1}{4} \left\lVert \Lambda^{-1} (\widetilde{\bb} \cdot \nabla \bb) \right\rVert_{H^s}^2
+ \left( \frac{1}{2} + \frac{|\widetilde{\bb}|^2}{2} \right) \left\lVert \bv \right\rVert_{H^s}^2 \notag\\
& + C \lVert (\bv, \bb) \rVert_{H^s}^2 \left( \left\lVert \nabla \bv \right\rVert_{L^{\infty}} + \lVert \bb \rVert_{H^{m-1-r}}^2 \right) \notag\\
\leq\; & -\frac{1}{4} \left( \left\lVert \bv \right\rVert_{H^s}^2 + \left\lVert \Lambda^{-1} (\widetilde{\bb} \cdot \nabla \bb) \right\rVert_{H^s}^2 \right)
+ C \lVert (\bv, \bb) \rVert_{H^s}^2 \left( \left\lVert \nabla \bv \right\rVert_{L^{\infty}} + \lVert \bb \rVert_{H^{m-1-r}}^2 \right).
\end{align}
Making use of Lemmas~\ref{lem:zero-mean-fractional} and \ref{2.1}, together with Poincar\'e inequality, one infers
\begin{align*}
\left\lVert \Lambda^{-1-r} \bb \right\rVert_{\dot{H}^s}
&\leq \left\lVert \Lambda^{-1-r} \bb \right\rVert_{H^s}
\leq c \left\lVert \widetilde{\bb} \cdot \nabla(\Lambda^{-1-r} \bb) \right\rVert_{H^{s+r}} \\
&= c \left\lVert \Lambda^{-1-r}(\widetilde{\bb} \cdot \nabla \bb) \right\rVert_{H^{s+r}}
\leq c_1 \left\lVert \Lambda^{-1}(\widetilde{\bb} \cdot \nabla \bb) \right\rVert_{H^s},
\end{align*}
where \( c \) is  the constant appearing in \eqref{Diophantine}. Then \eqref{12.2ab} could be rewritten as
\begin{align}\label{6.4}
\frac{d}{dt} Q_s(t)
\leq -\frac{c^*}{4} \left( \left\lVert \bv \right\rVert_{H^s}^2 + \left\lVert \Lambda^{-1-r} \bb \right\rVert_{\dot{H}^s}^2 \right)
+C \lVert (\bv, \bb) \rVert_{H^s}^2 \left( \left\lVert \nabla \bv \right\rVert_{L^{\infty}}+ \lVert \bb \rVert_{H^{m-1-r}}^2 \right),
\end{align}
where \( c^* := \min\{1, \frac{1}{c_1}\} \).

Let \( M > 0 \) be a constant to be specified later such that \( \frac{a}{M} \leq 1 \). It holds through Plancherel's theorem that
\begin{align*}
&\frac{a}{M}\lVert\Lambda^{s}\bb\rVert_{L^2}^2-\lVert\Lambda^{-1 - r}\bb\rVert_{\dot{H}^{s}}^2\\[1ex]
=&\sum_{|\bj|\neq0}\left(\frac{a}{M}|\bj|^{2s}-|\bj|^{2s-2-2r}\right)|\widehat{\bb}(\bj)|^2\\[1ex]
\leq&\frac{a}{M}\sum_{\frac{a}{M}>|\bj|^{-2 - 2r}}|\bj|^{2s}|\widehat{\bb}(\bj)|^2\\[1ex]
=&\left(\frac{a}{M}\right)^{\frac{m - s}{1 + r}}\sum_{\frac{a}{M}>|\bj|^{-2 - 2r}}\left(\frac{M}{a}\right)^{\frac{m -s}{1 + r}-1}|\bj|^{2s - 2m+2+2r}|\bj|^{2m-2-2r}|\widehat{\bb}(\bj)|^2\\[1ex]
\leq&\left(\frac{a}{M}\right)^{\frac{m -s}{1 + r}}\lVert
\Lambda^{m-1 - r}\bb\rVert_{L^2}^2\leq\frac{C}{M^{\frac{m - s}{1 + r}}}  \|\bb\|^2_{H^{m-1-r}}.
\end{align*}
Hence, the first term on the right-hand side of \eqref{6.4} becomes
\begin{align}\label{6.4aab}
&- \frac{c^{*}}{4} \left( \left\lVert \bv \right\rVert_{H^s}^2 + \left\lVert \Lambda^{-1-r} \bb \right\rVert_{\dot{H}^s}^2 \right)\notag\\
= &- \frac{c^{*}}{4} \left\lVert \bv \right\rVert_{H^s}^2 - \frac{c^* a}{4M} \left\lVert \Lambda^s \bb \right\rVert_{L^2}^2
+ \frac{c^*}{4} \left( \frac{a}{M} \left\lVert \Lambda^s \bb \right\rVert_{L^2}^2 - \left\lVert \Lambda^{-1 - r} \bb \right\rVert_{\dot{H}^s}^2 \right) \notag \\
\leq& - \frac{c^* a}{4 M c_0} \left( \left\lVert \bv \right\rVert_{H^s}^2 + \left\lVert \bb \right\rVert_{H^s}^2 \right)
+\frac{C}{M^{\frac{m - s}{1 + r}}}\|\bb\|^2_{H^{m-1-r}} \notag \\
\leq &- \frac{c^*}{8 M c_0}Q_s(t)
+ \frac{C}{M^{\frac{m - s}{1 + r}}}\|\bb\|^2_{H^{m-1-r}},
\end{align}
where we used \eqref{12.21}, and the fact \( \left\lVert \bb \right\rVert_{H^s} \leq c_0 \left\lVert \Lambda^s \bb \right\rVert_{L^2} \) which follows from the Poincar\'e inequality thanks to  the mean-zero condition \eqref{meanzero2}. Substituting \eqref{6.4aab} into \eqref{6.4} yields
\begin{align}\label{6.6}
\frac{d}{dt} Q_s(t)
\leq - \frac{c^*}{8 M c_0} Q_s(t)
+ \frac{C}{M^{\frac{m - s}{1 + r}}} \left\lVert \bb \right\rVert_{H^{m - 1 - r}}^2
+ C \left\lVert (\bv, \bb) \right\rVert_{H^s}^2\left( \left\lVert \nabla \bv \right\rVert_{L^{\infty}}+ \lVert \bb \rVert_{H^{m-1-r}}^2 \right).
\end{align}
 Taking \( M = a + \frac{c^* t}{8 c_0 \frac{m - s}{1 + r}} \) and multiplying  both sides of \eqref{6.6} with \( M^{\frac{m - s}{1 + r}} \), we infer
\begin{align*}
\frac{d}{dt} \left( M^{\frac{m - s}{1 + r}} Q_s(t) \right)
\leq C \left\lVert \bb \right\rVert_{H^{m - 1 - r}}^2
+ C\left( M^{\frac{m - s}{1 + r}} Q_s(t) \right)\left( \left\lVert \nabla \bv \right\rVert_{L^{\infty}}+ \lVert \bb \rVert_{H^{m-1-r}}^2 \right).
\end{align*}
Then Gr\"onwall's inequality and \eqref{eq:1.2qw} give the decay estimate \eqref{finaldecay}. So far, we complete the proof of Theorem~\ref{thm}.

\end{document}